\theoremstyle{plain}
\theoremstyle{plain}
\newtheorem{theorem}{Theorem}
\newtheorem{corollary}{Corollary}
\newtheorem{lemma}{Lemma}
\newtheorem{proposition}{Proposition}
\newtheorem{definition}{Definition}
\newtheorem{remark}{Remark}
\theoremstyle{definition}
\theoremstyle{remark}
\newdimen\theight
\def\TeXref#1{%
             \leavevmode\vadjust{\setbox0=\hbox{{\tt
                     \quad\quad  {\small \textrm #1}}}%
             \theight=\ht0
             \advance\theight by \lineskip
             \kern -\theight \vbox to
             \theight{\rightline{\rlap{\box0}}%
             \vss}%
             }}%
\title{Mapping Spaces and Postnikov Invariants}
\author[M. Moreira]{Manuel F. Moreira G.}
\address{Departamento de Xeometr\'{\i}a e Topolox\'{\i}a\\
         Facultade de Matem\'aticas\\
         Universidade de Santiago de Compostela\\
         15782 Santiago de Compostela\\ Spain}
\email{morgal2002@gmail.com}
\thanks{The  author was partially supported by the grant ERASMUS MUNDUS LOTE~20B, for 
the period 2010--2012}
\date{\today}
\subjclass{55S45, 55R35, 55R70,55R05,55S20,  55S37 }
\keywords{Postinikov invariants, compact-open topology, Hurewicz fibrations, fiberwise homotopy type, cohomology operation}
\begin{document}

\maketitle

\begin{abstract}

If $q:Y\longrightarrow{B}$ is a fibration and $Z$ is a space, then
the free range mapping space $Y!Z$ has a collection of partial
maps from $Y$ to $Z$ as underline space, i.e. those such maps
whose domains are individual fibre of $q$.

It is shown in \cite{P. Booth6} that these spaces have applications to several
topics in homotopy theory. These such results are given in
complete detail, concerning identification, cofibrations and
sectioned fibrations.  The necessary topological foundations for 
two none complicated applications, i.e.  to the cohomology of fibrations and the
classification of Moore-Postinikov systems, are given,
and the applications themselves outlined.

The usual argument is in the context of the usually category of
all topological spaces, and this necessarily introduces some new
problems.  Whenever we  work with exponential laws for mapping
spaces, in that category, we will usually find that we are forced
to assume that some of the spaces are locally compact and
Hausdorff, which detracts considerable from the generality of the
results obtained.

In this paper we develop the corresponding theory in the category
of compactly generated or k-spaces, which is free of the
inconvenient assumptions referred to above. In particular, we
obtain the k-space version of the applications to
identifications, cofibrations and sectioned fibrations, and establish improved
foundations for the k-versions of the other two applications,
i.e. the cohomology of fibrations and a classification theory for
Moore-Postnikov factorizations.

\end{abstract}

\tableofcontents

\maketitle

\section{Compactly Genereted Spaces $k-spaces$}
\subsection{Universal Property for $k-spaces$}

Let $X$ be a topological space. We define $kX$ to be the space $X$
retopologized with the final topology  (see \cite{S. Willard})
relative to all incoming maps from compact Hausdorff spaces.
\newline
Thus if $$g:C\longrightarrow{X}$$ is continuous, then
$$g:C\longrightarrow{kX}$$ is continuous, in fact $kX$ has the
finest topology for which all maps $$g:C\longrightarrow{X}$$ are
continuous. If $kX=X$ then $X$ will be said to be a
\emph{compactly generated space} or $k-space$. We will refer to
$kX$ as the $k-ification$ of $X$. For more detail, concerning
$k-spaces$ see \cite{R. Brown2}.

\begin{theorem}{\textbf{Universal Property of the space $kX$}}
Let $X$, and $Y$ be spaces and $h:X\longrightarrow{Y}$ be a
function. Then the composite functions
$h\circ{g}:C\longrightarrow{X}\longrightarrow{Y}$ are continuous,
for all $$g:C\longrightarrow{X}$$ with $C$ compact Hausdorff, if
and only if $$h:kX\longrightarrow{Y}$$ is
continuous.\label{Th: Theorem 1}
\end{theorem}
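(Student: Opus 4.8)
The plan is to prove the two implications separately, in each case simply unwinding the definition of the final topology that governs $kX$. Recall that, by construction, a subset $U$ of the underlying set of $X$ is open in $kX$ precisely when $g^{-1}(U)$ is open in $C$ for every continuous map $g:C\longrightarrow X$ with $C$ compact Hausdorff; this is exactly what it means for $kX$ to carry the finest topology making all such $g$ continuous. I would also make free use of the fact, already recorded in the excerpt, that every continuous $g:C\longrightarrow X$ remains continuous when regarded as a map $g:C\longrightarrow kX$.

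For the reverse implication ($\Leftarrow$), suppose $h:kX\longrightarrow Y$ is continuous. Given any continuous $g:C\longrightarrow X$ with $C$ compact Hausdorff, the noted fact upgrades $g$ to a continuous map $g:C\longrightarrow kX$, and then $h\circ g:C\longrightarrow kX\longrightarrow Y$ is a composite of continuous maps, hence continuous. This settles the easy direction and uses nothing beyond the definition of $kX$.

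For the forward implication ($\Rightarrow$), suppose that $h\circ g$ is continuous for every continuous $g:C\longrightarrow X$ with $C$ compact Hausdorff. To check that $h:kX\longrightarrow Y$ is continuous it suffices to verify that $h^{-1}(V)$ is open in $kX$ for every open $V\subseteq Y$. Fix such a $V$ and an arbitrary continuous $g:C\longrightarrow X$; then
$$g^{-1}\bigl(h^{-1}(V)\bigr)=(h\circ g)^{-1}(V),$$
which is open in $C$ because $h\circ g$ is continuous by hypothesis. Since this holds for all such $g$, the defining property of the final topology on $kX$ forces $h^{-1}(V)$ to be open in $kX$, so $h$ is continuous.

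The argument is essentially a formal consequence of the universal property of final topologies, so there is no genuine analytic difficulty. The only point requiring a little care — and the step I would flag as the place where a reader could slip — is keeping straight the distinction between $h$ viewed as a map on $X$ versus on $kX$: in the forward direction one must test openness of $h^{-1}(V)$ against the topology of $kX$, not that of $X$, and it is precisely the hypothesis on the composites $h\circ g$ that certifies this. Once the final-topology characterization is made explicit, both implications reduce to the single identity $g^{-1}\bigl(h^{-1}(V)\bigr)=(h\circ g)^{-1}(V)$ together with the standing fact that maps out of compact Hausdorff spaces into $X$ factor continuously through $kX$.
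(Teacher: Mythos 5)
Your proposal is correct and follows essentially the same route as the paper: the easy direction by composing $h$ with the continuous factorization $g:C\longrightarrow kX$, and the forward direction by testing $h^{-1}(V)$ against the final topology via the identity $g^{-1}\bigl(h^{-1}(V)\bigr)=(h\circ g)^{-1}(V)$. Your write-up is in fact slightly more careful than the paper's (which contains a typo, writing ``$g\circ h$'' for $h\circ g$), but the substance is identical.
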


\begin{proof}
If $$h:kX\longrightarrow{Y},$$ and $$g:C\longrightarrow{X}$$ are
continuous, then
$$g:C\longrightarrow{kX}$$ is continuous, and $g\circ{h}$ is continuous.
Conversely, let $h\circ{g}$ be continuous, for all maps
$$g:C\longrightarrow{X},$$ where  $C$ is compact Hausdorff, we
wish to prove that $h:kX\longrightarrow{Y}$ is continuous.  Let
$U$  be open  in $Y$. Then
$$(h\circ{g})^{-1}(U)=g^{-1}\circ{h^{-1}(U)}$$  is open
implies that $h^{-1}(U)$ is open in $kX$ , since $kX$ has the
final topology with respect to all maps
$$g:C\longrightarrow{kX}.$$
\end{proof}

\begin{proposition}\label{Pro1} 
The identity function $$1:kX\longrightarrow{X}$$ is continuous,
for all spaces $X$.
\end{proposition}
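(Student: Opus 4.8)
The plan is to obtain this immediately from the Universal Property of $kX$ just established in Theorem~\ref{Th: Theorem 1}, applied with $Y=X$ and $h$ equal to the identity function $1:X\longrightarrow{X}$. To invoke that theorem I would verify its hypothesis, namely that the composite $h\circ{g}:C\longrightarrow{X}$ is continuous for every map $g:C\longrightarrow{X}$ with $C$ compact Hausdorff. Since $h$ is the identity, this composite is simply $g$ itself, which is continuous by assumption; so the hypothesis holds trivially, and the theorem delivers the continuity of $1:kX\longrightarrow{X}$.

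Should a self-contained argument be preferred, I would instead unwind the definition of the final topology carried by $kX$. By construction a subset $V\subseteq{X}$ is open in $kX$ exactly when $g^{-1}(V)$ is open in $C$ for every $g:C\longrightarrow{X}$ with $C$ compact Hausdorff. To test continuity of the identity it then suffices to take an arbitrary open set $U$ of $X$ and check that its preimage $1^{-1}(U)=U$ lies in this topology. But each such $g$ is continuous as a map into $X$, so $g^{-1}(U)$ is automatically open in $C$, which is precisely the membership condition for $U$ to belong to the topology of $kX$.

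I do not expect any genuine obstacle here: the statement merely records that $k$-ification refines the topology, adding open sets but never discarding any, so that $kX$ and $X$ share the same underlying set with the former at least as fine. I would present the first, conceptual route in the body of the proof, since it showcases Theorem~\ref{Th: Theorem 1} as the standard device for verifying continuity out of a $k$-ified space, and I would mention the direct computation only parenthetically.
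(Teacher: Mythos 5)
Your first route is exactly the paper's own proof: it applies the Universal Property of Theorem~\ref{Th: Theorem 1} with $h$ the identity and observes that $h\circ g=g$ is continuous for every incoming map $g:C\longrightarrow X$ from a compact Hausdorff space. The proposal is correct, and the parenthetical direct verification via the final topology is a sound (if redundant) supplement.
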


\begin{proof}
For any map $$f:C\longrightarrow{kX},$$ $1\circ{f}$ is continuous,
so $1:kX\longrightarrow{X}$ is continuous by the Universal
Property.
\end{proof}

\begin{proposition}\label{Pro 2} 
If $$f:X\longrightarrow{Y}$$ is continuous, and $X$ and $Y$ are
spaces, then $$kf:kX\longrightarrow{kY}$$ where $kf(x)=f(x)$ is
continuous.
\end{proposition}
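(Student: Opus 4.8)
The plan is to reduce continuity of $kf:kX\longrightarrow{kY}$ to the Universal Property, Theorem~\ref{Th: Theorem 1}, applied with the arbitrary target space taken to be $kY$ itself. I would regard $kf$ as a function whose domain set is that of $X$ (equivalently $kX$) and whose codomain is $kY$, and invoke the equivalence it provides: $kf:kX\longrightarrow{kY}$ is continuous if and only if the composite $kf\circ{g}:C\longrightarrow{kY}$ is continuous for every continuous $g:C\longrightarrow{X}$ with $C$ compact Hausdorff. It therefore suffices to establish this latter family of continuity statements, one for each such test map $g$.

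First I would note that, set-theoretically, $kf\circ{g}=f\circ{g}$, since $kf(x)=f(x)$ by definition. As $f:X\longrightarrow{Y}$ and $g:C\longrightarrow{X}$ are both continuous, the composite $f\circ{g}:C\longrightarrow{Y}$ is continuous. The next step is to promote the codomain from $Y$ to $kY$: by the defining property of the $k$-ification, any continuous map from a compact Hausdorff space into $Y$ remains continuous when its codomain is replaced by $kY$. This is exactly the principle recorded in the definition, that $g:C\longrightarrow{X}$ continuous implies $g:C\longrightarrow{kX}$ continuous, here applied to the map $f\circ{g}$ and the space $kY$ in place of $X$ and $kX$. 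Hence $kf\circ{g}=f\circ{g}:C\longrightarrow{kY}$ is continuous, and since $C$ and $g$ were arbitrary, Theorem~\ref{Th: Theorem 1} yields continuity of $kf:kX\longrightarrow{kY}$.

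I do not expect a genuine obstacle, as the argument is essentially a bookkeeping exercise built on Theorem~\ref{Th: Theorem 1} and the definition of $kX$. The only point requiring care is to apply the Universal Property with codomain $kY$ rather than $Y$, and to use correctly that a compact Hausdorff test map into $Y$ lifts to a continuous map into $kY$; one must keep the roles of $X$, $Y$, $kX$, and $kY$ cleanly separated so that the identity of underlying functions $kf\circ{g}=f\circ{g}$ is exploited without conflating the topologies involved.
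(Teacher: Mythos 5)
Your proposal is correct and follows essentially the same route as the paper's own proof: test against an arbitrary $g:C\longrightarrow{X}$ with $C$ compact Hausdorff, observe that $f\circ{g}:C\longrightarrow{Y}$ is continuous and hence continuous into $kY$ by the defining property of the $k$-ification, then conclude via the Universal Property of Theorem~\ref{Th: Theorem 1}. Your write-up is in fact slightly more careful than the paper's about distinguishing $kf\circ g$ from $f\circ g$ and about which codomain the Universal Property is applied to.
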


\begin{proof}
Let $C$ be compact Hausdorff space, and
$$g:C\longrightarrow{X}$$ be continuous. Then
$$f\circ{g}:C\longrightarrow{Y}$$ is continuous. Hence
$$kf\circ{g}:C\longrightarrow{kX}\longrightarrow{kY}$$ is
continuous, for any incoming map $$g:C\longrightarrow{X}.$$ It
follows that $$f\circ{g}:C\longrightarrow{kY}$$ is continuous, and
that $kf$ is continuous by the Universal Property.
\end{proof}

\begin{proposition}\label{Pro 3} 
If $X$ is $k-space$, and $Y$ is any space, then
$f:X\longrightarrow{Y}$ is continuous if and only if
$$f^{'}:X\longrightarrow{kY}$$ is continuous, where
$f^{'}(x)=f(x)$ for all $x\in{X}$.
\end{proposition}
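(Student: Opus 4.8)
The plan is to prove the two implications separately, relying on the already established Propositions \ref{Pro1} and \ref{Pro 2} together with the observation that $X$ being a $k$-space means precisely $kX = X$ as topological spaces.

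First I would dispose of the implication ``$f'$ continuous $\Rightarrow$ $f$ continuous'', which in fact does not require $X$ to be a $k$-space at all. By Proposition \ref{Pro1} the identity function $1 : kY \longrightarrow Y$ is continuous. Since $f = 1 \circ f'$ as functions, both sending $x$ to $f(x)$, the map $f : X \longrightarrow Y$ is a composite of the continuous maps $f' : X \longrightarrow kY$ and $1 : kY \longrightarrow Y$, hence continuous.

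For the reverse implication ``$f$ continuous $\Rightarrow$ $f'$ continuous'' I would invoke Proposition \ref{Pro 2}. From continuity of $f : X \longrightarrow Y$ that proposition yields that $kf : kX \longrightarrow kY$ is continuous, where $kf(x) = f(x)$. The hypothesis that $X$ is a $k$-space gives $kX = X$, so $kf$ is literally a continuous map $X \longrightarrow kY$ satisfying $kf(x) = f(x) = f'(x)$ for every $x \in X$; that is, $f' = kf$ and $f'$ is continuous.

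The only subtle point, and thus the main (mild) obstacle, is recognizing that the definitional identity $kX = X$ for a $k$-space is exactly what permits reinterpreting the map $kf : kX \longrightarrow kY$ furnished by Proposition \ref{Pro 2} as the desired map $f' : X \longrightarrow kY$. An alternative route for this direction would sidestep Proposition \ref{Pro 2} and apply the Universal Property (Theorem \ref{Th: Theorem 1}) directly: to verify continuity of $f' : kX \longrightarrow kY$ it suffices to check that $f' \circ g : C \longrightarrow kY$ is continuous for every $g : C \longrightarrow X$ with $C$ compact Hausdorff; here $f \circ g : C \longrightarrow Y$ is continuous, and since $kY$ carries the final topology relative to all incoming maps from compact Hausdorff spaces, this composite factors continuously through $kY$. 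I would favour the first route, as it simply reuses the packaged conclusion of Proposition \ref{Pro 2}.
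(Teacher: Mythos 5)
Your proof is correct and follows essentially the same route as the paper: the implication from $f'$ to $f$ is obtained by composing with the continuous identity $1 : kY \longrightarrow Y$ of Proposition \ref{Pro1}, and the converse applies Proposition \ref{Pro 2} together with the identification $kX = X$. The extra remarks (that the first implication needs no $k$-space hypothesis, and the alternative argument via the Universal Property) are accurate but not needed.
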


\begin{proof}
Let $f$ be continuous. Then $$f:kX=X\longrightarrow{kY}$$ is continuous by the previous proposition.
\newline
Conversely, let $$f^{'}:X\longrightarrow{kY}$$ be continuous. Then
$$f=1\circ{f^{'}}:X\longrightarrow{Y}$$ is continuous, where
$$1:kY\longrightarrow{Y}$$ is the continuous identity function
(see Proposition 2).
\end{proof}

\begin{proposition}\label{Pro 4} 
If $C$ is a compact Hausdorff space, then a function
$g:C\longrightarrow{X}$ is continuous if and only if
$$g:C\longrightarrow{kX}$$  is continuous.
\end{proposition}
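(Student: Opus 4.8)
The plan is to establish the two implications separately, leaning on the continuous identity of Proposition~\ref{Pro1} for one direction and directly on the defining property of the final topology for the other; no exponential laws or cleverness are needed, so I expect this to be short.

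First I would dispatch the direction that is purely formal. Suppose $g:C\longrightarrow{kX}$ is continuous. By Proposition~\ref{Pro1} the identity $1:kX\longrightarrow{X}$ is continuous, so the composite $1\circ g:C\longrightarrow{kX}\longrightarrow{X}$ is continuous; but this composite is exactly $g$ regarded as a map into $X$. Hence $g:C\longrightarrow{X}$ is continuous. Note that this half uses nothing about $C$ beyond being a space, so the compactness and Hausdorffness hypotheses are not invoked here.

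Conversely, suppose $g:C\longrightarrow{X}$ is continuous with $C$ compact Hausdorff. Here I would appeal directly to the construction of $kX$: by definition $kX$ carries the final topology relative to \emph{all} incoming maps from compact Hausdorff spaces, that is, the finest topology for which every continuous map $g:C\longrightarrow{X}$ with $C$ compact Hausdorff is continuous as a map $g:C\longrightarrow{kX}$. Since our $g$ is precisely one of these defining maps, it is continuous into $kX$. Unwinding this, if $U$ is open in $kX$ then by the characterizing property of the final topology $g^{-1}(U)$ is open in $C$, which is exactly the continuity of $g:C\longrightarrow{kX}$.

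I expect the only point requiring care to be this second implication, where the temptation is to re-derive continuity through the Universal Property (Theorem~\ref{Th: Theorem 1}); instead one must simply recognize that $g$ already sits among the very maps that define the topology of the $k$-ification, after which the claim is immediate. An alternative route would first verify that every compact Hausdorff space is itself a $k$-space and then quote Proposition~\ref{Pro 3} with the roles $X=C$ and $Y=X$, but that would require proving the extra lemma $kC=C$, so I would avoid it in favor of the direct argument above.
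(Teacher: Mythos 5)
Your proof is correct and follows essentially the same route as the paper: one direction is immediate from the definition of $kX$ as the final topology with respect to incoming maps from compact Hausdorff spaces, and the other is obtained by composing with the continuous identity $1:kX\longrightarrow{X}$ of Proposition~\ref{Pro1}. The extra remarks (that compactness is not needed for the composition direction, and that the detour through $kC=C$ and Proposition~\ref{Pro 3} is avoidable) are accurate but not needed.
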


\begin{proof}
The \emph{only if} part follows from the definition of $kX$, as
was explained on page 1.
\newline
Conversely, let $$g:C\longrightarrow{kX}$$ be continuous. Then
$$1:kX\longrightarrow{X}$$ is continuous, and so
$$g:C\longrightarrow{X}$$ is continuous.
\end{proof}

\begin{proposition}\label{Pro 5} 
If $C$ is compact Hausdorff space, then $C$ is $k-space$.
\end{proposition}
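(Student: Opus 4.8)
The plan is to show that the two topologies on the underlying set of $C$ — its given compact Hausdorff topology and the $k$-ification topology — coincide, since this is precisely the assertion $kC = C$ that defines $C$ being a $k$-space. Because both candidate maps are the identity on the underlying set, it suffices to verify that the identity function is continuous in each direction, i.e. that $1 : kC \to C$ and $1 : C \to kC$ are both continuous.

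The first direction is immediate from Proposition \ref{Pro1}, which already gives that $1 : kC \to C$ is continuous; this records that the $k$-ification topology is at least as fine as the original topology on $C$. For the reverse direction I would exploit the fact that $C$ itself is a compact Hausdorff space, and may therefore be used as a test object in the final topology defining $kC$. Concretely, the identity $\id_C : C \to C$ is continuous, so applying Proposition \ref{Pro 4} with $X = C$ and $g = \id_C$ yields that $\id_C : C \to kC$ is continuous. Equivalently, any set that is open in $kC$ pulls back under $\id_C$ to itself, hence is already open in $C$, so the original topology is at least as fine as the $k$-ification topology.

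Combining the two continuity statements, the identity is a homeomorphism between $C$ and $kC$, so the two topologies agree and $kC = C$; hence $C$ is a $k$-space. I expect no substantive obstacle here: the single idea is to recognize that a compact Hausdorff space serves as its own test space in the final-topology characterization, after which the conclusion follows formally from Propositions \ref{Pro1} and \ref{Pro 4}.
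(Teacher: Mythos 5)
Your proof is correct and follows essentially the same route as the paper: continuity of $1:kC\to C$ from Proposition \ref{Pro1}, and continuity of $1:C\to kC$ by applying Proposition \ref{Pro 4} with $C$ as its own compact Hausdorff test space. The extra remarks about comparing fineness of topologies are harmless elaboration of the same argument.
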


\begin{proof}
From Proposition 1, $$1:kC\longrightarrow{C}$$ is continuous, and
the identity function
$$g:C\longrightarrow{kC}$$ is continuous by Proposition 4. Then
$kC=C$, and $C$ is a $k-space$.
\end{proof}

\begin{proposition}\label{Pro 6} 
If $X$ is any space, then $kX=k(kX)$.
\end{proposition}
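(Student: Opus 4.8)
The plan is to show $kX=k(kX)$ as spaces, i.e. that they have the same underlying set (clearly true, since both are $X$ retopologized) and the same topology. I would exploit the universal property together with the earlier propositions rather than argue directly about open sets. The key observation is that $kX$ is a $k$-space, and once that is established the result follows immediately, since $k$-ifying a $k$-space returns it unchanged.

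First I would verify that the identity $1:k(kX)\longrightarrow kX$ is continuous; this is exactly Proposition~\ref{Pro1} applied to the space $kX$ in place of $X$. Next I would verify the reverse identity $1:kX\longrightarrow k(kX)$ is continuous. To do this I would invoke the Universal Property (Theorem~\ref{Th: Theorem 1}): it suffices to check that for every compact Hausdorff $C$ and every continuous $g:C\longrightarrow kX$, the composite $1\circ g:C\longrightarrow k(kX)$ is continuous. But any continuous $g:C\longrightarrow kX$ is, by Proposition~\ref{Pro 4}, the same as a continuous map $g:C\longrightarrow k(kX)$ (apply Proposition~\ref{Pro 4} with $kX$ playing the role of the target space, so that $g$ into $kX$ corresponds to $g$ into $k(kX)$). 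Hence the composite is continuous, and the Universal Property gives continuity of the identity in this direction.

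With both identity maps continuous, the two topologies on the common underlying set $X$ are each finer than the other, so they coincide and $kX=k(kX)$. The only subtle point, and the step I expect to require the most care, is the correct application of Proposition~\ref{Pro 4}: one must be careful that the phrase ``$g:C\longrightarrow kX$ continuous iff $g:C\longrightarrow k(kX)$ continuous'' is an instance of Proposition~\ref{Pro 4} with the space $kX$ substituted for the generic space $X$ in that proposition's statement. Everything else is a routine bookkeeping of the universal property, and no delicate point-set argument about compact subsets is needed.
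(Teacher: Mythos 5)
Your argument is correct and rests on the same key fact as the paper's proof, namely Proposition~\ref{Pro 4}, which identifies the incoming compact Hausdorff test maps for $X$ with those for $kX$. The paper states this directly as ``both final topologies are generated by the same family of maps,'' while you repackage the identical observation as continuity of the two identity maps via Proposition~\ref{Pro1} and the Universal Property; the substance is the same and there is no gap (in particular, you avoid circularity by not actually invoking Corollary~\ref{Cor 1}, which the paper derives from this proposition).
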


\begin{proof}
The proof lies in the observation that $kX$, and $k(kX)$ have the
final topology relative to all maps  $$g:C\longrightarrow{X}$$ and
all maps $$g:C\longrightarrow{kX}$$ respectively, and by
Proposition 4 these are the same maps in each case.
\end{proof}

\begin{corollary}\label{Cor 1} 
For any space  $X$, $kX$ is a $k-space$.
\end{corollary}

\begin{proof}
From the previous proposition.
\end{proof}
\subsection{CW-Complexes}
\label{sec:CW-complexes}

\begin{proposition}\label{Pro 7} 
If $Y$ has the final topology with respect to a family of
functions
$$\lbrace{f_{j}:X_{j}\longrightarrow{Y}}\rbrace_{j\varepsilon{J}},$$
where all $X_{j}$ are $k-spaces$, then $Y$ is a  $k-space$.
\end{proposition}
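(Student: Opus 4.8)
The plan is to show that $Y=kY$ as topological spaces, which is by definition the assertion that $Y$ is a $k$-space. By Proposition \ref{Pro1} the identity $1:kY\longrightarrow Y$ is always continuous, so the two topologies agree the moment we can also show that the identity $1:Y\longrightarrow kY$ is continuous. Thus the whole proof reduces to establishing continuity of this single identity map.

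To prove that $1:Y\longrightarrow kY$ is continuous I would invoke the defining universal property of the final topology carried by $Y$ (see \cite{S. Willard}): a function out of $Y$ is continuous precisely when each of its composites with the structure maps $f_j$ is continuous. Applied to the identity with target $kY$, this splits the problem into checking, for every $j\in J$, that the composite $1\circ f_j=f_j:X_j\longrightarrow kY$ is continuous.

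The key step is then to upgrade the continuity of $f_j:X_j\longrightarrow Y$ to continuity of $f_j:X_j\longrightarrow kY$. Here the hypothesis that every $X_j$ is a $k$-space is essential, and it is exactly what Proposition \ref{Pro 3} is designed to exploit: when the domain is a $k$-space, a map into $Y$ is continuous if and only if the same map into $kY$ is continuous. Since each $f_j$ is one of the maps defining the final topology on $Y$, it is continuous as a map $X_j\longrightarrow Y$, and Proposition \ref{Pro 3} immediately delivers continuity of $f_j:X_j\longrightarrow kY$.

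Combining these observations, the identity $1:Y\longrightarrow kY$ is continuous, and together with Proposition \ref{Pro1} this gives $kY=Y$, so $Y$ is a $k$-space. The argument is essentially formal, and I do not expect a serious obstacle; the only point requiring care is to apply the correct universal property at each stage — the universal property of the final topology on $Y$ to reduce the identity to its components $f_j$, and then the $k$-space characterization of Proposition \ref{Pro 3}, which genuinely uses that each $X_j$ is a $k$-space, to pass continuity from $Y$ to $kY$.
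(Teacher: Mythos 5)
Your proof is correct and follows essentially the same route as the paper's: both arguments reduce the continuity of the identity $1:Y\longrightarrow kY$ to the continuity of each $f_{j}:X_{j}\longrightarrow kY$, which holds precisely because each $X_{j}$ is a $k$-space. The paper merely unwinds this by chasing open sets through the definition of the final topology, whereas you package the same steps via the universal property and Proposition \ref{Pro 3}; no gap either way.
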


\begin{proof}
Let $$g:C\longrightarrow{X_{j}}$$ be continuous, for all
$j\in{J}$, with $C$ compact Hausdorff. Then
$$f_{j}\circ{g}:C\longrightarrow{Y}$$ is continuous, and  if
$U$ is open in $kY$, then $$(f_{j}\circ{g})^{-1}(U)$$ is open in
$C$, by the definition of final topology. Thus
$$g^{-1}(f^{-1}_{j}(U))$$ is open in $C$. Hence
$f^{-1}_{j}(U)$ is open in $kX_{j}=X_{j}$ for all
$j\varepsilon{J}$ by the definition of final topology. Then $U$ is
open in $Y$, again by the definition of final topology, and so
$Y=kY$ as we required.
\end{proof}

\begin{corollary}\label{Cor 2} 
If $$f:X\longrightarrow{Y}$$ is an identification, and $X$ is a $k-space$, then $Y$ is a $k-space$.
\end{corollary}

\begin{corollary}\label{Cor 3} 
If $\lbrace{X_{j}}\rbrace_{j\in{J}}$ is a family of $k-spaces$, then the disjoint topological sum  $$\amalg_{j\varepsilon{J}}X_{j}$$ is 
$k-space$.
\end{corollary}

\begin{corollary}\label{Cor 4} 
Every $CW-complex$ is $k-space$.
\end{corollary}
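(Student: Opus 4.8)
The plan is to realize every $CW$-complex as a space equipped with the final topology relative to a family of maps issuing from $k$-spaces, so that Proposition~\ref{Pro 7} finishes the argument at once. The single fact about $CW$-complexes that I would invoke is the defining property of the weak (Whitehead) topology: if $\lbrace{\Phi_{\alpha}:D^{n_{\alpha}}\longrightarrow{X}}\rbrace_{\alpha}$ denotes the collection of characteristic maps of the cells of $X$, then a subset $A\subseteq{X}$ is closed if and only if $\Phi_{\alpha}^{-1}(A)$ is closed in $D^{n_{\alpha}}$ for every $\alpha$. This is precisely the assertion that $X$ carries the final topology with respect to the family $\lbrace{\Phi_{\alpha}}\rbrace_{\alpha}$.

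First I would note that each closed cell $D^{n_{\alpha}}$ is compact and Hausdorff, hence a $k$-space by Proposition~\ref{Pro 5}. Consequently $\lbrace{\Phi_{\alpha}:D^{n_{\alpha}}\longrightarrow{X}}\rbrace_{\alpha}$ is a family of maps whose sources are all $k$-spaces and with respect to which $X$ has the final topology. Proposition~\ref{Pro 7} then applies directly and gives $kX=X$; that is, $X$ is a $k$-space, as required.

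A second route, which has the merit of exercising the full set of results just proved, proceeds by induction over the skeleta. The $0$-skeleton $X^{(0)}$ is discrete and therefore a $k$-space. Assuming $X^{(n-1)}$ is a $k$-space, the disjoint sum $X^{(n-1)}\amalg\left(\amalg_{\alpha}D^{n}_{\alpha}\right)$ is a $k$-space by Corollary~\ref{Cor 3}, and $X^{(n)}$ is obtained from it as an identification, collapsing each boundary sphere onto its image under the attaching map, so $X^{(n)}$ is a $k$-space by Corollary~\ref{Cor 2}. Finally $X$ carries the final topology relative to the inclusions $X^{(n)}\hookrightarrow{X}$ of its skeleta, each of which is a $k$-space, and Proposition~\ref{Pro 7} again yields $kX=X$.

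There is no serious obstacle here: the entire content is the translation of the $CW$ weak topology into the language of final topologies, after which Propositions~\ref{Pro 5} and~\ref{Pro 7} (or Corollaries~\ref{Cor 2} and~\ref{Cor 3}) do all the work. The only point demanding mild care is to confirm that the weak topology is genuinely the final topology with respect to the characteristic maps, rather than merely with respect to the closed cells carrying their subspace topologies; these coincide because each $\Phi_{\alpha}$ is a closed surjection of $D^{n_{\alpha}}$ onto the corresponding closed cell.
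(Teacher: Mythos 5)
Your first argument is correct and is essentially the paper's own proof: the paper likewise observes that a $CW$-complex carries the final topology with respect to a family of maps from compact Hausdorff spaces (it uses the inclusions of the closed cells where you use the characteristic maps $\Phi_{\alpha}$, an immaterial difference since each $\Phi_{\alpha}$ is a closed surjection onto its closed cell), and then concludes via Proposition~\ref{Pro 5} together with the final-topology universal property, merely unwinding Proposition~\ref{Pro 7} by hand where you cite it directly. Your second, skeleton-by-skeleton route is a correct alternative but adds nothing needed here.
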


\begin{proof}
If $\lbrace{D_{j}}\rbrace_{j\varepsilon{J}}$ are the  cells of a $CW-complex$  $K$, and the inclusion $$D_{j}\hookrightarrow{X}$$ is 
denoted by $i_{j}$, then $K$  has the final topology relative to the family 
$$\lbrace{i_{j}:D_{j}}\longrightarrow{K}\rbrace_{j\varepsilon{J}}.$$ We wish to prove that  $K=kK$. 
We know that the identity  $$kK\longrightarrow{K}$$ is continuous, so we just have to prove the  continuity of the identity 
$1:K\longrightarrow{kK}$. Then  $1\circ{i}$ is a map from a compact Hausdorff  space into $K$ and so, by Proposition 4, is continuous. 
It follows by Theorem 1 that  $$1:K\longrightarrow{kK}$$ is continuous, and so $K=kK$,  and  $K$ is $k-space$.
\end{proof}
\subsection{Initial Topologies on $X$}

\begin{remark}\label{remark 1} 
Let $X$ carry the \emph{initial topology} (see \cite{S. Willard}), relative to the family of functions
$$\lbrace{g_{j}:X\longrightarrow{X_{j}}\rbrace_{j\varepsilon{J}}}.$$
If the spaces $X_{j}$ are $k-spaces$, it does not necessarily
follow that $X$ is a $k-space$. The  product space $Y\times{Z}$
carries the initial topology  relative to the projections
$$p_{1}:Y\times{Z}\longrightarrow{Y},$$ and $$p_{2}:Y\times{Z}\longrightarrow{Z},$$ yet
there is well  known examples in \cite{R. Brown3} and \cite{Clifford H. Dowker}, where $Y$ and $Z$
are $CW-complexes$, yet $Y\times{Z}$ is not a $k-space$. However,
the following result tells us that the $k-ification$ of the usual
sense initial topology is the appropriate model for a $k-space$
initial topology on $X.$
\end{remark}

\begin{theorem}{\textbf{The Universal Property for $k-spaces$ Initial Topologies on $X$}}\label{The 2 Universal Pro initial  topo} 
Let  $\lbrace{X_{j}}\rbrace_{j\varepsilon{J}}$ be a family of $k-spaces$, and $X$ carry the initial topology in the usual sense  
relative to a collection of functions $$\lbrace{g_{j}:X\longrightarrow{X_{j}\rbrace_{j\varepsilon{J}}}}.$$  
Then $kX$ is the initial topology of $X$ in the $k-sense$, as can be seen from the following Universal Property.
\begin{itemize}
\item[(a)] The functions $$g_{j}:kX\longrightarrow{X_{j}}$$ are continuous, and
\item[(b)] If $W$ is a $k-space$ and $$h:W\longrightarrow{X}$$ is a function, 
then $$h:W\longrightarrow{kX}$$ is continuous if and only if the composites $$g_{j}\circ{h}:W\longrightarrow{X_{j}}$$ are continuous, 
for all $j\varepsilon{J}$.
\end{itemize}
\end{theorem}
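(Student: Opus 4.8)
The plan is to dispose of (a) directly and then treat the two directions of (b) separately, reducing every continuity question to three tools already available: the universal property of $kX$ (Theorem~\ref{Th: Theorem 1}), Proposition~\ref{Pro 4}, and the universal property of the ordinary initial topology on $X$. For (a), note that since $X$ carries the usual initial topology relative to the $g_j$, each $g_j:X\to X_j$ is continuous; by Proposition~\ref{Pro1} the identity $1:kX\to X$ is continuous, so each composite $g_j=g_j\circ 1:kX\to X\to X_j$ is continuous, which is exactly (a).

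The forward implication of (b) is then immediate: if $h:W\to kX$ is continuous, composing with the maps $g_j:kX\to X_j$, continuous by (a), shows that each $g_j\circ h:W\to X_j$ is continuous. The substance lies in the converse. Assuming each $g_j\circ h:W\to X_j$ is continuous, I want $h:W\to kX$ to be continuous, and this is where the hypothesis that $W$ is a $k$-space is indispensable. Because $W=kW$, Theorem~\ref{Th: Theorem 1} reduces the continuity of $h:W\to kX$ to checking that $h\circ f:C\to kX$ is continuous for every map $f:C\to W$ with $C$ compact Hausdorff. For such an $f$, Proposition~\ref{Pro 4} permits replacing the codomain $kX$ by $X$, so it suffices to prove that $h\circ f:C\to X$ is continuous.

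Now $X$ carries the usual initial topology relative to the $g_j$, so $h\circ f:C\to X$ is continuous if and only if each composite $g_j\circ(h\circ f)=(g_j\circ h)\circ f:C\to X_j$ is continuous; and this holds because $g_j\circ h$ is continuous by hypothesis while $f$ is continuous. Tracing the equivalences backward — from the usual initial topology up through Proposition~\ref{Pro 4} and then Theorem~\ref{Th: Theorem 1} — delivers the continuity of $h:W\to kX$ and completes (b).

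The main obstacle here is organizational rather than technical: one must chain the three universal properties in the correct order and, crucially, invoke Theorem~\ref{Th: Theorem 1} with domain $kW=W$ (which is precisely why the $k$-space assumption on $W$ cannot be dropped) \emph{before} switching codomains via Proposition~\ref{Pro 4}. Once the compact Hausdorff probes $f:C\to W$ are in place, the verification against the usual initial topology is routine.
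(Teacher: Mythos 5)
Your proof is correct and takes essentially the same route as the paper's, whose entire argument is a one-line citation of Proposition~\ref{Pro 2} together with the ordinary universal property of initial topologies. The only difference is organizational: in the converse direction of (b) the paper applies the ordinary initial-topology universal property directly to conclude $h:W\to X$ is continuous and then upgrades the codomain to $kX$ via Proposition~\ref{Pro 2} (using $W=kW$, as in Proposition~\ref{Pro 3}), whereas you reach the same conclusion by probing with compact Hausdorff maps $f:C\to W$ through Theorem~\ref{Th: Theorem 1} and Proposition~\ref{Pro 4}, which amounts to re-proving Proposition~\ref{Pro 3} inline.
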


\begin{proof}
(a) follows from Proposition \ref{Pro 2}, (b) from Proposition \ref{Pro 2}, and the
Universal Property of initial topologies in the usual sense.
\end{proof}

\begin{remark}\label{remark 2} 
If $X$ and $Y$ are sets, then a function
$$\alpha:W\longrightarrow{X\times{Y}}$$ is of the form
$<\alpha_{1},\alpha_{2}>$, where
$$\alpha_{1}:W\longrightarrow{X},$$ and
$$\alpha_{2}:W\longrightarrow{Y}.$$ Thus
$\alpha(w)=<\alpha_{1},\alpha_{2}>(w)=(\alpha_{1}(w),\alpha_{2}(w))$,
for all $w\in{W}$.
\newline
If $W$, $X$ and $Z$ are spaces, then the Universal Property of
products spaces asserts that $\alpha$ is continuous if and only if
$\alpha_{1}$, and $\alpha_{2}$ are continuous.
\newline
We define $X\times_{k}{Y}=k(X\times{Y})$. For $k-spaces$ $X$ and
$Y$, it follows from Theorem \ref{The 2 Universal Pro initial  topo} that $X\times_{k}{Y}$ is the
product of $X$ and $Y$ in the $k-sense$.
\end{remark}

\begin{remark}\label{remark 3} 
Given  maps $$p:X\longrightarrow{B},$$ and
$$q:Y\longrightarrow{B},$$ then we will define the
\emph{pullback space or fibred product space of $X$ and $Y$}, to
be the subspace of $X\times{Y}$ with underlying  set
$$X\sqcap{Y}=\lbrace(x,y)\vert{p(x)=q(y)}\rbrace.$$ In this
situation $$p^{\ast}q:X\sqcap{Y}\longrightarrow{X},$$ and
$$q^{\ast}p:X\sqcap{Y}\longrightarrow{Y}$$ will be denote the
corresponding induced projections. Let $W$ be a space. Then it is
standard that $X\sqcap{Y}$ carries the initial topology relative
to the maps $p^{\ast}q$, and $q^{\ast}p$. The typical map
$$W\longrightarrow{X\sqcap{Y}}$$ will be denoted by
$\langle{h,k}\rangle$, where $h\in{M(W,X)}$ and $k\in{M(W,Y)}$
with $ph=qk$, thus $\langle{h,k}\rangle(w)=(h(w),k(w))$ where
$w\in{W}$.

The $k-ification$ of $X\sqcap{Y}$ will be denoted by
$X\sqcap_{k}{Y}$. It follows from Theorem \ref{The 2 Universal Pro initial  topo}  that $X\sqcap_{k}{Y}$
carries the $k-sense$ initial topologies relative to
$k(p^{\ast}q)$, and $k(q^{\ast}p).$
\end{remark}

\subsubsection{\textbf{Exponential Rules for $k-spaces$}}
\label{sec:exp rules}

If $X$ and $Y$ are spaces, $M(X,Y)$ will denote the set of all
maps from $X$ to $Y$. In cases where it is a topological space, it
should be assumed to have the compact-open topology.
\begin{lemma}\label{Lem 1} 
If $X$ is a $k-space$, and $C$ is compact Hausdorff, then
$X\times{C}$ is a $k-space$.
\end{lemma}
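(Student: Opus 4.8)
The plan is to exhibit $X\times{C}$ as carrying a final topology with respect to a family of maps whose domains are compact Hausdorff, and then to invoke Proposition~\ref{Pro 7}. Since $X$ is a $k$-space, it carries the final topology relative to all incoming maps $g:C'\longrightarrow{X}$ with $C'$ compact Hausdorff. For each such $g$ I would form the product map $g\times\id_C:C'\times{C}\longrightarrow{X\times{C}}$. Because a product of two compact Hausdorff spaces is again compact Hausdorff, each domain $C'\times{C}$ is compact Hausdorff, hence a $k$-space by Proposition~\ref{Pro 5}. If I can show that $X\times{C}$, with its usual product topology, carries the final topology relative to the family $\{g\times\id_C\}$, then Proposition~\ref{Pro 7} immediately yields that $X\times{C}$ is a $k$-space.

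The core of the argument is therefore this final-topology claim. One inclusion is automatic: each $g\times\id_C$ is continuous for the product topology, so the product topology is coarser than the final topology. For the reverse inclusion I must show that any set $U\subseteq{X\times{C}}$ whose preimage $(g\times\id_C)^{-1}(U)$ is open for every $g$ is itself product-open. Fix $(x_0,c_0)\in{U}$. Testing $U$ against the map $c\mapsto(x_0,c)$, whose domain $C$ is compact Hausdorff, shows that $\{c:(x_0,c)\in{U}\}$ is open in $C$; using that $C$, being compact Hausdorff, is locally compact, I would choose an open $N_0\ni{c_0}$ with compact closure $\ol{N_0}$ contained in this slice, so that $\{x_0\}\times\ol{N_0}\subseteq{U}$. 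Then I set $W=\{x\in{X}:\{x\}\times\ol{N_0}\subseteq{U}\}$ and check that $W\times{N_0}$ is an open box around $(x_0,c_0)$ inside $U$.

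To see that $W$ is open I use again that $X$ is a $k$-space: it suffices to check that $g^{-1}(W)$ is open for each $g:C'\longrightarrow{X}$ from a compact Hausdorff $C'$. Writing $V=(g\times\id_C)^{-1}(U)$, which is open by hypothesis, the equivalence $(g(t),c)\in{U}\iff(t,c)\in{V}$ gives $g^{-1}(W)=\{t\in{C'}:\{t\}\times\ol{N_0}\subseteq{V}\}$. The tube lemma, valid because $\ol{N_0}$ is compact and $V$ is open, produces for each such $t$ an open neighborhood $O_t$ with $O_t\times\ol{N_0}\subseteq{V}$, so $g^{-1}(W)$ is a union of such $O_t$ and hence open. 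Thus $W$ is open in $X$ by the Universal Property of Theorem~\ref{Th: Theorem 1}, and $W\times{N_0}\subseteq{U}$ exhibits $U$ as product-open.

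The main obstacle is precisely this final-topology claim, and the only place where the hypothesis on $C$ is genuinely used is the tube lemma together with local compactness of $C$; compact Hausdorff is exactly what makes both available. Everything else is routine bookkeeping with the universal property of $k$-ifications and with Proposition~\ref{Pro 7}.
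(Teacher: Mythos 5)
Your proof is correct, and it shares the paper's overall skeleton: both arguments reduce the lemma to the claim that the product topology on $X\times{C}$ is already the final topology relative to the family of maps $h\times{1_{C}}:K\times{C}\longrightarrow{X\times{C}}$ with $K$ compact Hausdorff, after which the conclusion is immediate (you via Proposition~\ref{Pro 7} applied to the compact Hausdorff domains $K\times{C}$; the paper by using that universal property to show the identity $X\times{C}\longrightarrow{X\times_{k}{C}}$ is continuous). Where you genuinely diverge is in how that final-topology claim is established. The paper tests against functions $f:X\times{C}\longrightarrow{Z}$ into an arbitrary space and passes through the exponential correspondence $f\leftrightarrow g:X\longrightarrow{M(C,Z)}$, invoking the classical proper and admissible conditions for the compact-open topology from Hu (the admissible condition being where local compactness of $C$ enters). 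You instead argue directly with open sets: a set $U$ whose preimages under all $g\times\id_C$ are open is shown to be product-open by slicing at $(x_0,c_0)$, shrinking to a compact neighborhood $\ol{N_0}$ in $C$, and checking that the tube set $W=\{x:\{x\}\times\ol{N_0}\subseteq{U}\}$ is $k$-open in $X$ via the tube lemma; all of these steps are sound. Your route is more elementary, avoiding the function-space machinery entirely, and it makes transparent that the only property of $C$ used is local compactness plus Hausdorffness, so your argument proves the stronger statement with $C$ merely locally compact Hausdorff. The paper's route costs little in context because it reuses exactly the exponential-law infrastructure it is about to develop for Theorem~\ref{The 3}. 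Both are complete proofs of the lemma.
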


\begin{proof}
We need to prove that the identity function
$$1:X\times{C}\longrightarrow{X\times_{k}{C}}$$ is continuous. The
first step is to show that $X\times{C}$ has the final topology
relative to all maps
$$h\times{1_{C}}:K\times{C}\longrightarrow{X\times{C}}$$ where $K$
is compact Hausdorff, and $h\in{M(K,X)}$.

Let $Z$ be an arbitrary space and
$$f:X\times{C}\longrightarrow{Z}$$ be a function. We will
assume that
$$f\circ({h\times{1_{C}}}):K\times{C}\longrightarrow{Z}$$ is
continuous for every compact Hausdorff spaces $K$, and all $h$ in
$M(K,X)$. It follows by the proper condition for the category of
all topological spaces (see \cite [ Ch. V. Lem. 3.1]{Sze-Tsen Hu}). Then there is
an associated map $$u:K\longrightarrow{M(C,Z)}$$ determined by the
rule
$$u(y)(c)=f\circ{(h\times1_{C})}(y,c)$$
$$=f(h(y),c)$$
$$=(gh(y))(c)$$
where $y\in{K}$ and the function  $$g:X\longrightarrow{M(C,Z)}$$
corresponds to $f$ by the rule $g(x)(c)=f(x,c)$, $x\in{K}$ and
$c\in{C}$. Hence $g\circ{h}=u$ is continuous for all choices of
$K$ and $h$. The Universal Property, associated with the $k-space$
topology on $X$, implies that
$$g:X\longrightarrow{M(C,Z)}$$ is continuous.

The admissible condition for the category of all spaces 
[ Ch.V. Cor.3.5 \cite{Sze-Tsen Hu}] now ensures that $f$ is continuous. Hence the maps
$$h\times{1_{C}}:K\times{C}\longrightarrow{X\times{C}}$$ satisfy
the Universal Property associated with the required final topology
on $X\times{C}$, so $X\times{C}$ has that topology.

We will again assume that $K$ is a compact Hausdorff space and
that $h:K\longrightarrow{X}$ is a map. Then
$$h\times{1_{C}}:K\times{C}\longrightarrow{X\times{C}}$$ and
$$h\times{1_{C}}:k(K\times{C})\longrightarrow{k(X\times{C})}=X\times{C}$$
are continuous. Now $K\times{C}$  is compact Hausdorff, so it is a
$k-space$, i.e. $k(X\times{C})=X\times{C}$; hence
$$h\times{1_{C}}:k(K\times{C})\longrightarrow{X\times_{k}{C}}$$
is continuous.

Now this last map is the composite
$$\xymatrix{
  K\times{C} \ar[dr]_{1\circ(h\times{1_{C}})} \ar[r]^{h\times{1_{C}}}
                & X\times{C} \ar[d]^{1}  \\
                & X\times_{k}{C}             }$$
so it follows by the Universal property established earlier in
this proof, that $$1:X\times{C}\longrightarrow{X\times_{k}{C}}$$
is continuous.

Hence $X\times{C}=X\times_{k}{C}$, and so is $k-space$.
\end{proof}

\begin{theorem}\label{The 3} 
Let $X,$ $Y$ and $Z$ be  $k-spaces$.  Then
$$f:X\times_{k}{Y}\longrightarrow{Z}$$ is a continuous function if
and only if $$g:X\longrightarrow{kM(Y,Z)}$$ is continuous, where
$f(x,y)=g(x)(y)$  for all $x\in{X}$, $y\in{Y}$, and  $M(Y,Z)$ is
the space of continuous functions from $Y$ to $Z$ with the compact
open topology.
\end{theorem}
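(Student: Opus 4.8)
The plan is to establish the two implications separately, treating the forward one as a routine consequence of the classical theory and concentrating the real work on the converse. For the forward direction, suppose $f\colon X\times_k Y\to Z$ is continuous. The identity $1\colon X\times Y\to X\times_k Y=k(X\times Y)$ is continuous by Proposition \ref{Pro1}, so the composite $f\colon X\times Y\to Z$ is continuous for the ordinary product topology. The proper condition for the category of all spaces (\cite[Ch.~V, Lem.~3.1]{Sze-Tsen Hu}) then yields that the adjoint $g\colon X\to M(Y,Z)$, defined by $g(x)(y)=f(x,y)$, is continuous; and since $X$ is a $k$-space, Proposition \ref{Pro 3} upgrades this to continuity of $g\colon X\to kM(Y,Z)$.

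For the converse, assume $g\colon X\to kM(Y,Z)$ is continuous. By Proposition \ref{Pro 3} (using that $X$ is a $k$-space) the map $g\colon X\to M(Y,Z)$ is also continuous. Since $X\times_k Y=k(X\times Y)$, Theorem \ref{Th: Theorem 1} reduces the continuity of $f$ to checking that $f\circ t\colon C\to Z$ is continuous for every compact Hausdorff $C$ and every map $t\colon C\to X\times Y$. Writing $t=\langle a,b\rangle$ with $a\colon C\to X$ and $b\colon C\to Y$ continuous, the composite we must control is $c\mapsto f(a(c),b(c))=g(a(c))(b(c))$; note that $g\circ a\colon C\to M(Y,Z)$ is continuous.

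The crux, and the main obstacle, is that $Y$ is an arbitrary $k$-space with no local compactness available, so there is no continuous evaluation map on $M(Y,Z)\times Y$ to exploit directly. The device I would use is to precompose with $b$, trading the bad exponent $Y$ for the good exponent $C$. Precomposition $b^{*}\colon M(Y,Z)\to M(C,Z)$, $\beta\mapsto\beta\circ b$, is continuous (a subbasic set $W(K',U)\subseteq M(C,Z)$ pulls back to $W(b(K'),U)$, and $b(K')$ is compact in $Y$). Hence $\rho:=b^{*}\circ(g\circ a)\colon C\to M(C,Z)$ is continuous, with $\rho(c)(c')=g(a(c))(b(c'))$. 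Now the exponent $C$ is compact Hausdorff, so the admissible condition (\cite[Ch.~V, Cor.~3.5]{Sze-Tsen Hu}), applied exactly as in the proof of Lemma \ref{Lem 1}, shows the adjoint $\bar\rho\colon C\times C\to Z$, $\bar\rho(c,c')=g(a(c))(b(c'))$, is continuous.

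Finally, restricting along the diagonal $\Delta\colon C\to C\times C$ gives $f\circ t=\bar\rho\circ\Delta$, a composite of continuous maps, hence continuous. As $t$ was arbitrary, Theorem \ref{Th: Theorem 1} delivers the continuity of $f\colon X\times_k Y\to Z$, completing the converse. I expect the precomposition step to be where the argument genuinely gains something over the classical exponential law: it is what replaces the missing local compactness of $Y$ and routes the evaluation through the compact Hausdorff exponent $C$, where the admissible condition is available.
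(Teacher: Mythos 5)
Your converse is correct and is, in substance, the paper's own argument: the paper proves the admissible condition (Proposition \ref{Pro 10 Admisible Condition}) by factoring $f$ as $e\circ(g\times_k 1_Y)$ and establishing continuity of the evaluation $e$ (Proposition \ref{Pro 9}) by exactly your device --- precompose with the $Y$-component of a compact Hausdorff probe so that the exponent becomes $C$ rather than $Y$, and evaluate there, where local compactness is available. Your restriction of $\bar\rho$ along the diagonal is the same computation as the paper's $e_C\circ\langle\alpha_2^{\ast}\alpha_1,1_C\rangle$, written pointwise.

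The forward direction, however, contains a genuine error. You assert that the identity $1\colon X\times Y\to X\times_k Y$ is continuous by Proposition \ref{Pro1} and conclude that $f$ is continuous for the ordinary product topology. Proposition \ref{Pro1} gives the \emph{opposite} arrow: $k$-ification refines the topology, so the continuous identity is $X\times_k Y\to X\times Y$, and a map continuous on $X\times_k Y$ need not be continuous on $X\times Y$. The two topologies genuinely differ even for CW-complexes (Dowker's example, cited in Remark \ref{remark 1}), which is precisely why the theorem is stated with $\times_k$; so you cannot feed $f$ into the classical proper condition on $X\times Y$. The repair is the paper's Proposition \ref{Pro 8 Proper Condition}: first check that each slice $g(x)=f\circ\langle c_x,1_Y\rangle\colon Y\to Z$ is continuous, so that $g$ lands in $M(Y,Z)$ at all; then, for each compact Hausdorff $C$ and $\alpha\in M(C,X)$, observe that $C\times Y$ is already a $k$-space by Lemma \ref{Lem 1}, so $f\circ k(\alpha\times 1_Y)\colon C\times Y\to Z$ is continuous for the ordinary product topology and the classical proper condition yields continuity of $g\circ\alpha\colon C\to M(Y,Z)$; finally the universal property of $kX=X$ (Theorem \ref{Th: Theorem 1}), together with Proposition \ref{Pro 4}, upgrades this to continuity of $g\colon X\to kM(Y,Z)$.
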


\begin{proof}
The proof follows immediately from the next three results.
\end{proof}
\subsubsection{\textbf{Proper Condition for $k-spaces$}}
\label{sec:proper condition k-spaces}

\begin{proposition}{\textbf{The Proper Condition}}\label{Pro 8 Proper Condition} 
Let $X$, $Y$ and $Z$ be $k-spaces$, and
$f:X\times_{k}Y\longrightarrow{Z}$  be continuous. Then the rule
$g(x)(y)=f(x,y)$, where $x\in{X}$ and $y\in{Y}$, determines a well
defined and continuous function $$g:X\longrightarrow{kM(Y,Z)}.$$
\end{proposition}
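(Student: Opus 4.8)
The plan is to reduce the statement to the classical proper condition for ordinary spaces, \cite[Ch. V. Lem. 3.1]{Sze-Tsen Hu}, using the universal property of the $k$-topology (Theorem \ref{Th: Theorem 1}) together with Proposition \ref{Pro 3}, Proposition \ref{Pro 5} and Lemma \ref{Lem 1}. First I would check that $g$ is well defined as a function, i.e. that $g(x)=f(x,-)$ lies in $M(Y,Z)$ for each $x\in X$. The slice inclusion $j_{x}:Y\longrightarrow X\times Y$, $j_{x}(y)=(x,y)$, is continuous into the ordinary product; since $Y$ is a $k$-space, Proposition \ref{Pro 3} upgrades it to a continuous map $j_{x}:Y\longrightarrow k(X\times Y)=X\times_{k}Y$. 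Composing with $f$ shows that $f\circ j_{x}=g(x):Y\longrightarrow Z$ is continuous, so $g(x)\in M(Y,Z)$ and $g:X\longrightarrow kM(Y,Z)$ is at least a well-defined function of sets.

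For continuity, since $X$ is a $k$-space I would invoke Theorem \ref{Th: Theorem 1}: the map $g:X\longrightarrow kM(Y,Z)$ is continuous if and only if $g\circ h:C\longrightarrow kM(Y,Z)$ is continuous for every compact Hausdorff $C$ and every $h\in M(C,X)$. Fix such a pair $(C,h)$. Because $C$ is compact Hausdorff it is a $k$-space by Proposition \ref{Pro 5}, so by Proposition \ref{Pro 3} it suffices to prove that $g\circ h:C\longrightarrow M(Y,Z)$ is continuous for the compact-open topology. The essential point is then to manufacture a genuinely continuous map into $Z$ out of an \emph{ordinary} product. Consider $h\times 1_{Y}:C\times Y\longrightarrow X\times Y$. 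By Lemma \ref{Lem 1} the domain $C\times Y$ is already a $k$-space, so Proposition \ref{Pro 3} promotes $h\times 1_{Y}$ to a continuous map $C\times Y\longrightarrow k(X\times Y)=X\times_{k}Y$; composing with $f$ yields a continuous map $F:=f\circ(h\times 1_{Y}):C\times Y\longrightarrow Z$ defined on the ordinary product $C\times Y$.

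Finally I would apply Hu's proper condition to $F$. It produces an adjoint $\widehat{F}:C\longrightarrow M(Y,Z)$, $\widehat{F}(c)(y)=F(c,y)$, which is continuous for the compact-open topology, and by construction $\widehat{F}(c)(y)=f(h(c),y)=g(h(c))(y)$, so $\widehat{F}=g\circ h$. This gives continuity of $g\circ h$ into $M(Y,Z)$, hence (by Proposition \ref{Pro 3}, as $C$ is a $k$-space) into $kM(Y,Z)$, and Theorem \ref{Th: Theorem 1} then delivers continuity of $g$. I expect the only real obstacle to be the one already flagged: the hypothesis only gives continuity of $f$ on $X\times_{k}Y$ and not on the ordinary product $X\times Y$, so \cite[Ch. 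V. Lem. 3.1]{Sze-Tsen Hu} is not directly applicable to $f$ itself. The device that removes this obstruction is precisely the precomposition with $h\times 1_{Y}$ out of the $k$-space $C\times Y$ furnished by Lemma \ref{Lem 1}: its image lands inside $X\times_{k}Y$, reducing the whole question to the classical ordinary-space setting where the proper condition holds without any local compactness assumption.
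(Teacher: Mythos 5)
Your proposal is correct and follows essentially the same route as the paper's own proof: verify well-definedness via the slice inclusion into $X\times_{k}Y$, reduce continuity of $g$ to continuity of $g\circ h$ for test maps $h:C\longrightarrow X$ from compact Hausdorff spaces via the universal property, use Lemma \ref{Lem 1} to see that $C\times Y$ is already a $k$-space so that $f\circ(h\times 1_{Y})$ is continuous on the ordinary product, and then apply Hu's classical proper condition. The only cosmetic difference is that the paper checks well-definedness using the map $\langle c_{x},1_{Y}\rangle$ and Remark \ref{remark 2} rather than Proposition \ref{Pro 3}, which amounts to the same thing.
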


\begin{proof}
Fixing $x\in{X}$, let $$g(x):Y\longrightarrow{Z}$$ be defined by
$g(x)(y)=f(x,y)$ where $y\in{Y}$. Then $g(x)$ is clearly a well
defined function. Now we need to prove that $g(x)$ is continuous.
If  $c_{x}:X\longrightarrow{Y}$ is the constant map at value $x$,
then
$$<c_{x},1_{Y}>:Y\longrightarrow{X\times_{k}{Y}}$$
defined by $<c_{x},1_{Y}>(y)=(x,y)$ is a continuous function (see
Remark \ref{remark 2}). It follows that $g(x)=f\circ{<c_{x},1_{Y}>}$ is
continuous.

Let $C$ be compact Hausdorff space, and $\alpha\in{M(C,X)}$. We
wish to prove that $g\circ{\alpha}$ is continuous for all choices
of $\alpha$. For then, by the Universal Property associated with
the k-topology on $X$, $g$ is continuous.

Thus
$$\alpha\times1_{Y}:C\times{Y}\longrightarrow{X\times{Y}}$$ is
continuous, and  $k(\alpha\times1_{Y})$  is continuous. So that
$$f\circ{k(\alpha\times{1_{Y}}):C\times{Y}\longrightarrow{Z}}$$ is
continuous by previous Lemma.  It follows  by the proper condition
in the ordinary sense; see \cite[Lem. 3.1, Pg. 158]{Sze-Tsen Hu}, that
$$h:C\longrightarrow{M(Y,Z)}$$ is continuous, where
$$h(c)(y)=f(\alpha(c),y)=g(\alpha(c))(y),$$ where $c\in{C}$ and $y\in{Y}$. Hence
$$h(c)=g(\alpha(c))=(g\circ{\alpha})(c).$$
So  $g\circ{\alpha}=h$ is continuous for all $\alpha\in{M(C,X)}$,
and the result follows, as explained earlier.
\end{proof}

\begin{proposition}\label{Pro 9} 
If Y and Z are $k-spaces$, then
$$e:kM(Y,Z)\times_{k}Y\longrightarrow{Z}$$ is continuous.
\end{proposition}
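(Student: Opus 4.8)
The plan is to establish continuity of $e$ through the Universal Property of the $k$-ification (Theorem~\ref{Th: Theorem 1}). By Remark~\ref{remark 2} we have $kM(Y,Z)\times_{k}Y=k\big(kM(Y,Z)\times Y\big)$, so it suffices to show that for every compact Hausdorff space $C$ and every continuous map $\theta:C\longrightarrow kM(Y,Z)\times Y$ the composite $e\circ\theta:C\longrightarrow Z$ is continuous. Such a $\theta$ is of the form $\theta=\langle f,g\rangle$ with $f:C\longrightarrow kM(Y,Z)$ and $g:C\longrightarrow Y$ continuous, and by Proposition~\ref{Pro 4} the map $f:C\longrightarrow M(Y,Z)$ is then continuous for the compact-open topology. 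Writing out the evaluation, $(e\circ\theta)(c)=f(c)(g(c))$, so everything reduces to the continuity of the single map $h:C\longrightarrow Z$, $h(c)=f(c)(g(c))$.

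The main obstacle is that one cannot simply invoke continuity of the ordinary evaluation $M(Y,Z)\times Y\longrightarrow Z$, since in the usual category this requires $Y$ to be locally compact Hausdorff, precisely the hypothesis this theory is designed to avoid. The key idea is to transport the problem from $Y$ onto $C$, which is compact and hence locally compact Hausdorff. To this end I would first use precomposition: the map $g$ induces $g^{*}:M(Y,Z)\longrightarrow M(C,Z)$, $g^{*}(\alpha)=\alpha\circ g$, and this map is continuous because for a subbasic set $W(K,U)$, with $K\subseteq C$ compact and $U\subseteq Z$ open, one computes $(g^{*})^{-1}(W(K,U))=W(g(K),U)$, where $g(K)\subseteq Y$ is compact; no local compactness of $Y$ is needed for this step. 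Composing with $f$, the map $g^{*}\circ f:C\longrightarrow M(C,Z)$ is continuous, and its value at $c$ is the function $c'\mapsto f(c)(g(c'))$.

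I would then finish by evaluating on $C$ itself. Since $C$ is compact Hausdorff, and therefore locally compact Hausdorff, the admissible condition in the ordinary sense (Hu, Cor.~3.5, as already invoked for Lemma~\ref{Lem 1}) guarantees that the evaluation $e_{C}:M(C,Z)\times C\longrightarrow Z$, $e_{C}(\beta,c)=\beta(c)$, is continuous. The map $\langle g^{*}f,\,1_{C}\rangle:C\longrightarrow M(C,Z)\times C$ is continuous by the Universal Property of products, and the composite
$$h=e_{C}\circ\langle g^{*}f,\,1_{C}\rangle$$
sends $c$ to $\big(g^{*}f(c)\big)(c)=\big(f(c)\circ g\big)(c)=f(c)(g(c))$, which is exactly the required $h(c)$. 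Hence $e\circ\theta=h$ is continuous for every such $\theta$, and the Universal Property of Theorem~\ref{Th: Theorem 1} delivers the continuity of $e:kM(Y,Z)\times_{k}Y\longrightarrow Z$. The decisive point of the argument is the replacement of $Y$ by the locally compact space $C$ via the continuous precomposition $g^{*}$, which is what makes the ordinary admissible condition applicable despite $Y$ being an arbitrary $k$-space.
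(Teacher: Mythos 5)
Your proof is correct and follows essentially the same route as the paper: both decompose a test map from a compact Hausdorff $C$ into components, transfer the problem to $M(C,Z)$ via the precomposition map induced by the second component, and conclude with the ordinary continuity of $e_{C}:M(C,Z)\times C\longrightarrow Z$ for $C$ compact Hausdorff. Your write-up merely adds an explicit verification (via subbasic sets $W(K,U)$) that precomposition is continuous, which the paper asserts without proof.
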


\begin{proof}
Given that $C$ is compact Hausdorff, and
$$\alpha:C\longrightarrow{kM(Y,Z)\times_{k}{Y}}$$ is continuous. We
want to prove that $e\circ{\alpha}$ is continuous, where
$$\alpha(c)=(\alpha_{1}(c),\alpha_{2}(c)),$$
$\alpha_{1}:C\longrightarrow{kM(Y,Z)},$ and
$\alpha_{2}:C\longrightarrow{Y}$ are continuous. Further, it
follows by Proposition 4 that
$\alpha_{1}:C\longrightarrow{M(Y,Z)}$ is also continuous, and
$$\alpha^{\ast}_{2}:M(Y,Z)\longrightarrow{M(C,Z)},$$
$\alpha^{\ast}(h)=h\circ{\alpha_{2}}$ is continuous, where
$h\in{M(Y,Z)}$. Now
$$e_{C}:M(C,Z)\times{C}\longrightarrow{Z}$$ is continuous since
$C$ is compact Hausdorff see [Ch.V, Lem. 3.9 \cite{Sze-Tsen Hu}]. Then
$e\circ{\alpha}$ is continuous because
$e\circ{\alpha}=e<\alpha_{1},\alpha_{2}>=e_{C}<\alpha^{\ast}_{2}\circ{\alpha_{1}},1_{C}>$.
Hence $e$ is continuous.
\end{proof}
\subsubsection{\textbf{Admisible Condition}}
\label{sec:Admisible condition}

\begin{proposition}{\textbf{The Admissible Condition}}\label{Pro 10 Admisible Condition} 
If $X$, $Y$ and $Z$ are $k-spaces$, and $g:X\longrightarrow{kM(Y,Z)}$ is continuous, then 
$f:X\times_{k}Y\longrightarrow{Z}$ is continuous where $f(x,y)=g(x)(y)$.
\end{proposition}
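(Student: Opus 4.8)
The plan is to realize $f$ as a composite of two continuous maps and invoke the continuity of the evaluation map already established in Proposition \ref{Pro 9}. Concretely, I would write
$$f = e\circ(g\times_{k}1_{Y}):X\times_{k}Y\longrightarrow{kM(Y,Z)\times_{k}Y}\longrightarrow{Z},$$
where $g\times_{k}1_{Y}$ denotes the $k$-ification of the ordinary product map $g\times 1_{Y}$. The first step is to check this factorization at the level of underlying functions: for $x\in{X}$ and $y\in{Y}$ one has $(g\times_{k}1_{Y})(x,y)=(g(x),y)$, so that $e\circ(g\times_{k}1_{Y})(x,y)=e(g(x),y)=g(x)(y)=f(x,y)$, which matches the defining rule of $f$.

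Next I would verify that the map $g\times_{k}1_{Y}$ is continuous. Since $g:X\longrightarrow{kM(Y,Z)}$ and $1_{Y}:Y\longrightarrow{Y}$ are continuous, the product $$g\times 1_{Y}:X\times{Y}\longrightarrow{kM(Y,Z)\times{Y}}$$ is continuous in the usual product topology by the Universal Property of product spaces (see Remark \ref{remark 2}). Applying Proposition \ref{Pro 2}, the induced map $k(g\times 1_{Y})$ is continuous, and by the definition $X\times_{k}{Y}=k(X\times{Y})$ this is exactly the desired map $$g\times_{k}1_{Y}:X\times_{k}Y\longrightarrow{kM(Y,Z)\times_{k}Y}.$$

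Finally, since $e:kM(Y,Z)\times_{k}Y\longrightarrow{Z}$ is continuous by Proposition \ref{Pro 9}, the composite $f=e\circ(g\times_{k}1_{Y})$ is continuous as a composite of continuous functions, which is the required conclusion. The only point demanding any care is the identification of $k(g\times 1_{Y})$ with the $k$-sense product map $g\times_{k}1_{Y}$; this is immediate from the definition of the $k$-product together with Proposition \ref{Pro 2}, so I do not anticipate a genuine obstacle here. The argument is essentially a formal manipulation of the exponential correspondence, the substantive content having already been absorbed into the continuity of the evaluation map $e$.
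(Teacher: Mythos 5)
Your proof is correct and follows essentially the same route as the paper: both realize $f$ as the composite $e\circ(g\times_{k}1_{Y})$ and appeal to the continuity of the evaluation map from Proposition \ref{Pro 9}. You merely spell out the continuity of $g\times_{k}1_{Y}$ (via Proposition \ref{Pro 2} and the definition of the $k$-product), a detail the paper leaves implicit.
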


\begin{proof}
The proof follows because $f$ is the composite

$$X\times_{k}Y\stackrel{g\times_{k}{1_{Y}}}\longrightarrow{kM(Y,Z)\times_{k}Y}\stackrel{e}\longrightarrow{Z},$$
in which $$e(g\times_{k}{1_{Y}})(x,y)=e(g(x),1_{Y}(y))$$
$$=e(g(x),y)$$ $$=g(x)(y)$$ $$=f(x,y).$$ Hence $f$ is
continuous.
\end{proof}

\begin{theorem} [Ch.V, Th.3.9 \cite{Sze-Tsen Hu}]
Let $X$ and $Y$ be Hausdorff spaces and $Z$ an arbitrary space. If
either of the following conditions are satisfied:
\begin{itemize}
\item [(a)] $Y$ is locally compact or \item[(b)] $X$ and $Y$
satisfy the first axiom of countability,
\end{itemize}
then $$f:X\times{Y}\longrightarrow{Z}$$ is a continuous function
if and only if $$g:X\longrightarrow{M(Y,Z)}$$ is continuous, where
$f(x,y)=g(x)(y)$  for all $x\in{X}$, $y\in{Y}$, and $M(Y,Z)$ is
the space of continuous functions from $Y$ to $Z$ with the compact
open topology.
\end{theorem}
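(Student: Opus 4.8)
The plan is to prove the two implications separately, the key observation being that only the ``if'' direction requires hypotheses~(a) or~(b), while the ``only if'' direction holds for arbitrary $Y$. Throughout I would write $W(K,U)=\{h\in M(Y,Z):h(K)\subseteq U\}$ for the subbasic open sets of the compact--open topology, where $K\subseteq Y$ is compact and $U\subseteq Z$ is open.

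First I would establish that $f$ continuous implies $g$ continuous (the admissibility half). It suffices to show $g^{-1}(W(K,U))$ is open for each subbasic $W(K,U)$. Fixing $x_0$ with $g(x_0)\in W(K,U)$, the definition of $g$ gives $f(\{x_0\}\times K)\subseteq U$, so the open set $f^{-1}(U)$ contains the slice $\{x_0\}\times K$; since $K$ is compact, the tube lemma produces an open $V\ni x_0$ with $V\times K\subseteq f^{-1}(U)$, which says exactly that $g(V)\subseteq W(K,U)$. This step uses only compactness of $K$ and needs none of the standing hypotheses.

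For the converse I would reduce, in case~(a), to the continuity of the evaluation map $e\colon M(Y,Z)\times Y\to Z$, $e(h,y)=h(y)$, after which $f=e\circ(g\times\id_Y)$ is continuous as a composite. To see $e$ is continuous at $(h_0,y_0)$, take an open $U\ni h_0(y_0)$; then $h_0^{-1}(U)$ is an open neighbourhood of $y_0$, and local compactness together with the Hausdorff property lets me shrink it to an open $V$ with $y_0\in V$ and with $\overline{V}$ compact and contained in $h_0^{-1}(U)$. Then $W(\overline{V},U)\times V$ is a neighbourhood of $(h_0,y_0)$ that $e$ carries into $U$, since $h(y)\in h(\overline{V})\subseteq U$ for every $(h,y)$ in it.

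The hard part will be case~(b), where the evaluation map need not be continuous and the composite argument is therefore unavailable. Here I would instead use that $X$ and $Y$ first countable makes $X\times Y$ first countable, so continuity of $f$ is equivalent to sequential continuity. Given $(x_n,y_n)\to(x_0,y_0)$, continuity of $g$ gives $g(x_n)\to g(x_0)$ in the compact--open topology, while continuity of $g(x_0)$ along $y_n\to y_0$ gives, for each open $U\ni g(x_0)(y_0)$, an index $N_1$ with $g(x_0)(y_n)\in U$ for $n\ge N_1$. The key trick is to package the tail of the sequence into the single compact set $K'=\{y_0\}\cup\{y_n:n\ge N_1\}$: then $g(x_0)\in W(K',U)$, so $g(x_n)\in W(K',U)$ for all large $n$, and since $y_n\in K'$ this forces $g(x_n)(y_n)=f(x_n,y_n)\in U$ eventually, whence $f(x_n,y_n)\to f(x_0,y_0)$. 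I expect the main obstacle to be precisely this packaging of a convergent sequence into one compact set, which is what lets the compact--open topology detect the diagonal behaviour of $(x_n,y_n)$ in the absence of any local compactness of $Y$.
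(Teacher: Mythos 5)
The paper does not actually prove this statement: it is quoted verbatim, with attribution, from Hu's \emph{Elements of General Topology} [Ch.~V, Th.~3.9], so there is no internal argument to measure yours against. Your blind proof is correct and is essentially the standard textbook argument. The ``only if'' half via the tube lemma is right and, as you observe, needs neither (a) nor (b); you should just add the one-line remark that each $g(x)=f(x,-)$ is continuous, so that $g$ really takes values in $M(Y,Z)$ before you discuss its continuity. Case (a) is the classical reduction to continuity of the evaluation map $e:M(Y,Z)\times Y\to Z$, using local compactness plus Hausdorffness of $Y$ to interpose a compact closure $\overline{V}\subseteq h_0^{-1}(U)$; this is exactly how Hu proves it. Case (b) is also sound: first countability of $X\times Y$ reduces continuity of $f$ to sequential continuity, and your packaging of the tail $\{y_0\}\cup\{y_n:n\ge N_1\}$ into a single compact set (compact in any space, Hausdorff or not) is precisely the device that lets the compact--open topology control the diagonal sequence $g(x_n)(y_n)$. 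The order of quantifiers is handled correctly: $U$ first, then $N_1$, then $K'$, then $N_2$ from $g(x_n)\to g(x_0)$ and openness of $W(K',U)$. No gaps; only the minor well-definedness remark above is missing.
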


We notice that the inconvenient assumptions built into the
admissible condition of Theorem 4 avoided in the corresponding
result here i.e. Theorem 3.
\begin{theorem}
If $$q:Y\longrightarrow{B}$$ is a Hurewicz fibration in the sense
of usual category of spaces, then $$kq:kY\longrightarrow{kB}$$ is
a Hurewicz
 fibration in the $k-sense$.
\end{theorem}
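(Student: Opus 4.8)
The plan is to reduce the $k$-sense homotopy lifting property for $kq$ to the ordinary homotopy lifting property already enjoyed by $q$, the decisive point being that the unit interval $I$ is compact Hausdorff. To say that $kq:kY\longrightarrow{kB}$ is a Hurewicz fibration in the $k$-sense means: for every $k$-space $W$, every map $f:W\longrightarrow{kY}$, and every $k$-homotopy $H:W\times_{k}I\longrightarrow{kB}$ with $H(-,0)=kq\circ{f}$, there is a lift $\tilde{H}:W\times_{k}I\longrightarrow{kY}$ satisfying $\tilde{H}(-,0)=f$ and $kq\circ\tilde{H}=H$. I would verify exactly this lifting condition.

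The first step is to remove the $k$-product from the cylinder. Since $W$ is a $k$-space and $I$ is compact Hausdorff, Lemma \ref{Lem 1} gives $W\times_{k}I=W\times{I}$; in particular $W\times{I}$ is itself a $k$-space. Consequently every piece of data above is literally a map out of an ordinary product, and the only discrepancy between the $k$-sense and the usual sense lives in the targets $kY,kB$ as opposed to $Y,B$.

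Next I would transport the data into the ordinary category and invoke the hypothesis on $q$. The function $kq$ agrees with $q$ on underlying points by Proposition \ref{Pro 2}, so $kq=q$ as a set map. Because $W$ and $W\times{I}$ are $k$-spaces, Proposition \ref{Pro 3} lets me view $f:W\longrightarrow{kY}$ as a continuous $\bar{f}:W\longrightarrow{Y}$ and $H:W\times{I}\longrightarrow{kB}$ as a continuous $\bar{H}:W\times{I}\longrightarrow{B}$, with the same underlying functions, and the relation $q\circ\bar{f}=\bar{H}(-,0)$ follows at once from $kq\circ{f}=H(-,0)$. Since $q$ is a Hurewicz fibration in the usual category, which lifts against \emph{all} spaces and hence against the $k$-space $W$ occurring here, there is a continuous lift $\tilde{H}:W\times{I}\longrightarrow{Y}$ with $\tilde{H}(-,0)=\bar{f}$ and $q\circ\tilde{H}=\bar{H}$. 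Applying Proposition \ref{Pro 3} once more, now to the $k$-space $W\times{I}$, promotes $\tilde{H}$ to a continuous map $\tilde{H}:W\times_{k}I\longrightarrow{kY}$ with the same underlying function; the identities $\tilde{H}(-,0)=f$ and $kq\circ\tilde{H}=H$ then hold pointwise, hence as maps, giving the required $k$-lift.

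The one substantive obstacle is the identification $W\times_{k}I=W\times{I}$; everything else is a formal translation through Propositions \ref{Pro 2} and \ref{Pro 3}. That identification is precisely what Lemma \ref{Lem 1} furnishes, and it is exactly the place where the compact Hausdorffness of $I$ is indispensable: it prevents the $k$-sense cylinder from differing from the ordinary one and so legitimizes the reduction to the usual lifting property. Were $I$ replaced by a general $k$-space this step would fail, which is why the argument is special to the homotopy-lifting setting.
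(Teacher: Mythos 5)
Your proof is correct and follows essentially the same route as the paper's: both arguments push the lifting data forward along the continuous identities $kY\longrightarrow Y$ and $kB\longrightarrow B$, invoke the ordinary covering homotopy property of $q$, and then use Lemma \ref{Lem 1} (the cylinder over a $k$-space is a $k$-space) together with Proposition \ref{Pro 3} to promote the resulting lift to a map into $kY$. The only differences are cosmetic, namely how the initial data on $W\times\{0\}$ is packaged.
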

\begin{proof}
Let $A$ be a $k-space$, and
$$f:A\times{\lbrace{0}\rbrace}\longrightarrow{kY},$$ and
$$F:A\times{I}\longrightarrow{kB}$$ be maps such that
$F(a,0)=kq(f(a,0))$ for all $a\in{A}$. We wish to prove that there
is a map $$F^{\sim}:A\times{I}\longrightarrow{kY}$$ such that
$F^{\sim}(a,0)=f(a,0)$ for $a\in{A}$, and $kp\circ{F^{\sim}}=F.$
\newline
Taking $1_{Y}$, and $1_{B}$ to be the identity functions
$kY\longrightarrow{Y}$, and $kB\longrightarrow{B}$, respectively,
then
$$1_{Y}\circ{f}:A\times{\lbrace{0}\rbrace}\longrightarrow{Y},$$
and $$1_{B}\circ{F}:A\times{I}\longrightarrow{B}$$ are maps such
that $1_{B}\circ{F(a,0)}=q\circ{(1_{Y}\circ{f})(a,0)}$, for all
$a\in{A}$. Then it follows from the covering homotopy property for
$p$ that we can find a map
$$H:A\times{I}\longrightarrow{Y}$$ such that
$1_{B}\circ{F}=p\circ{H}$ and $H(a,0)=1_{Y}f(a,0)$, for all
$a\in{A}$. We define
$$F^{\sim}:A\times{I}\longrightarrow{kY}$$ as having the same
underlying function as $H$.  Now $A\times{I}$ is a $k-space$
by Lemma \ref{Lem 1}, so $H$ is continuous by Proposition \ref{Pro 3}.  
Hence the result follows.
\end{proof}

%\chapter{MAPPING SPACES AND FIBREWISE HOMOTOPY THEORY}
\section{Mapping Spaces and Fibrewise Homotopy Theory}
\label{sec:fiberwise}

\begin{definition}
A topological space  $B$ is said to be weak Hausdorff if $$\bigtriangleup_{B}=\lbrace(b,b)
\mid{b}\in{B}\rbrace \subset{B\times{B}},$$ is closed in $B\times_{k}B$.
\end{definition}

\begin{definition}\label{Def: Partial Maps} 
If $Z$ is a space, we will define $Z^{\sim}$ as the set
$Z\cup\lbrace{\omega}\rbrace$ where $w\not\in{Z}$. We give $Z^{\sim}$
the topology whose closed sets are $Z^{\sim}$ itself, and the
closed sets of $Z$.  Let $C$  be a closed subspace of  $Y$, and
$$f:C\longrightarrow{Z}$$ be a map, so $f$ is a partial map from
$Y$ to $Z$. Then  there is an associated map
$$f^{\sim}:Y\longrightarrow{Z^{\sim}}$$ defined by the rule
$$
f^{\sim}(y)=
\begin{cases}
f(y) & \text{if $ y\in C$}\\
\omega & \hbox{ otherwise\;.}
\end{cases}
$$

\end{definition}

\begin{remark}\label{remark 4} 
Let $B$ be a $T_{1}-space$, and $q:Y\longrightarrow{B}$ be a map.
We define the set
$$Y!Z=\bigcup_{b\in{B}}M(Y\mid{b},Z)$$ where  $q^{-1}(b)=Y\mid{b}$, and the function
$$q!Z:Y!Z\longrightarrow{B}$$ is the projection that sends all maps $$Y\mid{b}\longrightarrow{Z}$$
to $b$, for all $b\in{B}$.  We know that  $B$ is a $T_{1}-space$,
so each fibre $Y\mid{b}$ is closed in $Y$. It follows that if
$f\in{M(Y\mid{b},Z)}$, then $i(f)=f^{\sim}$ defines a function
$$i:Y!Z\longrightarrow{M(Y,Z^{\sim})}.$$ We define
the modified compact-open topology on $Y!Z$ as being the initial
topology relative to $i$, and $q!Z$.  Thus we define the free
range mapping space $Y!Z$  as  having a subbase consisting of all
sets of the form $(q!Z)^{-1}(U)$, where $U$ is open in $B$, and
all sets of the form
$$W(A,V)=\lbrace{f}\in{Y!Z}\mid{f(A\cap{dom(f))\subset{V}}}\rbrace,$$
where $A$ ranges over the compact subsets of $Y$, and $V$ ranges
over the open subsets of $Z$.

We now introduce a $k-version$ of the free range mapping space
$Y!Z$, i.e.  $k(Y!Z)$.  Thus this space carries the initial
topology relative to $k(q!Z)$, and $k(i)$ in the sense of
$k-spaces$, i.e. the $k-ification$ of the previously defined
topology on $Y!Z$.
\end{remark}

\begin{remark}\label{remark 5} 

\textbf{For the rest of this chapter  all spaces used should be
assumed to be $k-spaces$, and all constructions and definitions
should be understood in that sense. Thus $X\times{Y}$,
$X\sqcap{Y}$, $M(X,Y)$, $X!Y$ and $p!Y$ will refer to concepts
that were previously denoted by $X\times_{k}{Y}$,
$X\sqcap_{k}{Y}$, $kM(X,Y)$, $k(X!Y)$ and $k(p!Y)$, respectively.
The term Hurewicz fibration refers to a map between $k-spaces$
that has the covering homotopy property relative to incoming maps
from $k-spaces$.}
\end{remark}
\subsection{Fibred Exponential Law for $k-spaces$}
\label{sec:Fiwer-expon}

\begin{theorem} {\textbf{ Fibred Exponential Law for k-spaces}}\label{Th:Fibred exp law k-spaces} 
Let $X$, $Y$, $Z$ and $B$  be  $k-spaces$,  with $B$ weak
Hausdorff, and $p:X\longrightarrow{B}$ $q:Y\longrightarrow{B}$,
and $r:Z\longrightarrow{B}$  be maps.  Then there is a bijective
correspondence between
\begin{itemize}
\item[(a)] maps $$f^{>}:X\sqcap{Y}\longrightarrow{Z},$$ and
\item[(b)] fibrewise maps $$f^{<}:X\longrightarrow{Y!Z}$$
determined by the  rule $f^{>}(x,y)=f^{<}(x)(y)$ where
$p(x)=q(y)$.
\end{itemize}
\end{theorem}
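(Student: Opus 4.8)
The plan is to reduce the fibred exponential law to the ordinary (unfibred) exponential law for $k$-spaces, Theorem \ref{The 3}, by routing everything through the total-map space $M(Y,Z^\sim)$ and the embedding $i\colon Y!Z\to M(Y,Z^\sim)$ of Remark \ref{remark 4}. First I would dispose of the set-level correspondence: for fixed $x\in X$ the rule $y\mapsto f^{<}(x)(y)=f^{>}(x,y)$ is defined exactly on the fibre $Y\mid p(x)$, since the fibrewise condition $(q!Z)\circ f^{<}=p$ says $f^{<}(x)\in M(Y\mid p(x),Z)$, while the pullback condition $p(x)=q(y)$ says $y\in Y\mid p(x)$. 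This already gives a bijection between functions $X\sqcap Y\to Z$ that are continuous on each fibre and fibrewise functions $X\to Y!Z$, so the whole content is to match continuity of $f^{>}$ with continuity of $f^{<}$.

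The key geometric step, and the only place the weak Hausdorff hypothesis is used, is to show that $X\sqcap Y$ is closed in $X\times Y$ (in the $k$-sense). The map $\langle p\circ\pr_X,\,q\circ\pr_Y\rangle\colon X\times Y\to B\times B$ is continuous and $X\sqcap Y$ is precisely the preimage of the diagonal $\bigtriangleup_B$; since $B$ is weak Hausdorff, $\bigtriangleup_B$ is closed in $B\times B$, so $X\sqcap Y$ is closed. I would then record the short lemma implicit in Definition \ref{Def: Partial Maps}: for a closed subspace $C$ of a space $W$, a function $g\colon C\to Z$ is continuous if and only if its extension $g^\sim\colon W\to Z^\sim$ (sending $W\setminus C$ to $\omega$) is continuous. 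One direction checks preimages of the closed sets $Z^\sim$ and $D\subseteq Z$, using that $C$ is closed in $W$; the other uses that $Z\hookrightarrow Z^\sim$ is a (closed) embedding. Applied with $C=X\sqcap Y$ and $W=X\times Y$ and $g=f^{>}$, this shows that $f^{>}$ is continuous if and only if $(f^{>})^\sim\colon X\times Y\to Z^\sim$ is continuous.

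It then remains to connect $(f^{>})^\sim$ with $f^{<}$. A direct computation of adjoints shows that the map $X\to M(Y,Z^\sim)$ associated to $(f^{>})^\sim$ under Theorem \ref{The 3} is exactly $i\circ f^{<}$: both send $x$ to the total map $y\mapsto f^{>}(x,y)$ when $q(y)=p(x)$ and $y\mapsto\omega$ otherwise, since $i(f^{<}(x))=(f^{<}(x))^\sim$ is precisely the extension of $f^{<}(x)$ from its domain $Y\mid p(x)$. Here one must check that $Z^\sim$ is again a $k$-space, so that Theorem \ref{The 3} legitimately applies with target $Z^\sim$; this holds because $\{\omega\}$ is open and $\omega$ is a generic point, so any non-closed subset of $Z^\sim$ is detected by a map out of a compact Hausdorff space. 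Consequently $(f^{>})^\sim$ is continuous if and only if $i\circ f^{<}$ is continuous.

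Finally, since $Y!Z$ carries the (k-sense) initial topology relative to $i$ and $q!Z$, Theorem \ref{The 2 Universal Pro initial  topo}(b) gives that $f^{<}\colon X\to Y!Z$ is continuous if and only if both $i\circ f^{<}$ and $(q!Z)\circ f^{<}=p$ are continuous, the latter being automatic. Chaining the equivalences
$$f^{>}\ \text{continuous}\iff (f^{>})^\sim\ \text{continuous}\iff i\circ f^{<}\ \text{continuous}\iff f^{<}\ \text{continuous}$$
upgrades the set bijection of the first paragraph to a bijection between continuous maps, which is the assertion. I expect the main obstacle to be the adjoint bookkeeping in the third step together with the verification that $Z^\sim$ stays within the $k$-category so that the unfibred law can be invoked with that target; the weak Hausdorff step, though indispensable, is short.
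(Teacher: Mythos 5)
Your proof follows essentially the same route as the paper's: use weak Hausdorffness to see that $X\sqcap Y$ is closed in $X\times Y$, pass to the total map $(f^{>})^\sim\colon X\times Y\to Z^\sim$, apply the unfibred exponential law to get $i\circ f^{<}\colon X\to M(Y,Z^\sim)$, and conclude via the universal property of the $k$-sense initial topology on $Y!Z$. Your version is somewhat more careful than the paper's (which compresses the reverse direction to ``the argument is reversible''), notably in making explicit the lemma that $f^{>}$ is continuous iff $(f^{>})^\sim$ is, and in checking that $Z^\sim$ remains a $k$-space so that Theorem~\ref{The 3} applies with that target.
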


\begin{proof}
There is a map
$$p\times{q}:X\times{Y}\longrightarrow{B}\times{B},$$  and the weak
Hausdorff condition ensures that $\bigtriangleup_{B}$ is closed in
$B\times{B}$. Hence the underlying set of $X\sqcap{Y}$, i.e.
$$(p\times{q})^{-1}(\bigtriangleup_{B}),$$ is a closed subspace of
$X\times{Y}$, so it follows that our theory of partial maps from
$Y$ to $Z$, with closed domains, is relevant to the situation
under consideration.

Let
$$f^{>}:X\sqcap{Y}\longrightarrow{Z}$$ be a map.  Then $f^{>}$
determines a map
$$g^{>}=(f^{>})^{\sim}:X\times{Y}\longrightarrow{Z^{\sim}}$$
by the rule $g^{>}(x,y)=f^{>}(x,y)$, where  $p(x)=q(y)$, and
$g^{>}(x,y)=w$ otherwise.  We know by the proper condition
(Proposition 8) that there is an associated map
$$g^{<}:X\longrightarrow{M(Y,Z^{\sim})}$$  defined by
$g^{<}(x)(y)=g^{>}(x,y)$, where $x\in{X}$, and $y\in{Y}$. So
$$g^{<}(x)(y)=w$$ if and only if $$p(x)\neq{q(y)}.$$  We now
define $$f^{<}:X\longrightarrow{Y!Z}$$ by
$f^{<}(x)(y)=g^{<}(x)(y)$  only in the case where $p(x)=q(y)$.
Then
$$f^{<}(x)(y)=g^{<}(x)(y)=g^{>}(x,y)=f^{>}(x,y).$$  However,
$f^{<}(x)(y)$ is undefined when $p(x)\neq{q(y)}$.  If $p(x)=b$,
then $f^{<}(x)(y)$ is defined for all $y\in{Y\mid{b}}$.  i.e.
$f^{<}(x)\in{Y!Z}$, and $(q!Z)(f^{<}(x))=b$.  So
$(q!Z)\circ{f^{<}}=p$, and $(q!Z)\circ{f^{<}}$ is continuous.
Also, recalling  our definition of the topology on $Y!Z$,
$i\circ{f^{<}}=g^{<}$  is continuous.  It  follows by the
Universal Property of the $k-sense$ initial topology on $Y!Z$,
and Proposition 3, that $f^{<}$ is continuous. The argument is
reversible, and so the proof is complete.
\end{proof}
\begin{remark}\label{remark 6} 
If $X$ and $Y$ are spaces, then $[X,Y]$ will denote the set of
homotopy classes of free maps from $X$ to $Y$. If $X$ and $Y$ are
based spaces, then $M^{o}(X,Y)$ will denote the set of based maps
from $X$ to $Y$, with the of course $k-ified$ compact-open
topology, and $[X,Y]^{o}$ will be denote the set of based homotopy
classes. If $Y$ and $B$ are based spaces, and
$q:Y\longrightarrow{B}$ is a map, the set of based sections to q,
i.e.
$$Sec^{o}(q)=\lbrace{f}\in{M^{o}(B,Y)}\mid
q\circ{f}=1_{B}\rbrace$$ will be equipped with the of course
$k-ified$ compact-open topology.
\newline In addition, if $B$ and $Z$ have basepoints $b_{o}\in{B}$ and $z_{o}\in{Z}$, then the constant
map $$c_{z_{o}}:Y\mid{b_{o}}\longrightarrow{Z}$$ is defined by
$c_{z_{o}}(y)=z_{o}$. We take $c_{z_{o}}$  as basepoint for $Y!Z$.
The space $M(X,A;Y,B)$ will denote the set of maps from $X$ to $Y$
for which $f(A)\subseteq{B}$, again with the $k-ified$ compact-open
topology, and $[X,A;Y,B]$ for the corresponding set of homotopy
classes.
\end{remark}
\subsection{Vertical Homotopies and Sections}
\label{sec:vertical homotopy and sections}

\begin{definition} {\textbf{Vertical Homotopy}}
Let $q:Y\longrightarrow{B}$ be a map, and $\ell_{o}$, and
$\ell_{1}$ be sections to $q$. A homotopy
$F:B\times{I}\longrightarrow{Y}$ such that
$F_{t}=F(-,t):B\longrightarrow{Y}$ is a section to $q$, for all
$t\in{I}$, will be said to be a vertical homotopy. The sections
$\ell_{o}$, and $\ell_{1}$ will be said to be vertically homotopic
if there is a vertical homotopy from $\ell_{o}$ to $\ell_{1}$. I
will write $\Pi_{o}(Sec^{o}(q))$ for the corresponding set of
homotopy classes.
\end{definition}

\begin{corollary}{\textbf{Section Rule.}}
Let $(Z,z_{o})$, and $(B,b_{o})$ be based spaces, $B$ being weak
Hausdorff, and $q:Y\longrightarrow{B} $ be a map.
\begin{itemize}
\item [(a)] If $\ell:(Y,Y\vert{b_{o}})\longrightarrow{(Z,z_{o})}$
is a map, then the rule
$\ell^{\bullet}{(b)}=\ell\vert({Y}\vert{b})\longrightarrow{Z}$,
where $b\in{B}$, defines a  based section $\ell^{\bullet}$  to
$q!Z$. Thus $\ell^{\bullet}{(b)}(y)=\ell(y)$,  where $q(y)=b$.
Then there is a bijective correspondence
$$\theta:M(Y,Y\vert{b_{o}};Z,z_{o})\longrightarrow{Sec^{o}{(q!Z)}},$$
where $\theta(\ell)=\ell^{\bullet}$,
$\ell\in{M(Y,Y\vert{b_{o}};Z,z_{o}})$. \item [(b)]  If
$$\ell_{o},\ell_{1} \in{M(Y,Y\vert{b_{o}};Z,z_{o})}$$ then
$\ell_{o}\simeq\ell_{1}$ via homotopy
$$F:(Y\times{I} ;(Y\vert{b_{o}})\times{I})\longrightarrow{(Z,z_{o})},$$ if and only if $\ell^{\bullet}_{o}\simeq\ell^{\bullet}_{1}$
via a based vertical homotopy. \item[(c)] The rule
$[\ell]\rightsquigarrow{[\ell^{\bullet}]}$ defines a bijection
$$\lambda:[Y,Y\vert{b_{o}};Z,z_{o}]\longrightarrow{\Pi_{o}(Sec^{o}(q!Z))}.$$
\end{itemize}
\end{corollary}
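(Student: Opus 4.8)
The plan is to derive all three parts as specializations of the Fibred Exponential Law (Theorem~\ref{Th:Fibred exp law k-spaces}), the substantive work being to identify the pullback $X\sqcap Y$ in two convenient cases and then to track the basepoint conditions. In each application the base space will remain the given weak Hausdorff $k$-space $B$, so the hypotheses of Theorem~\ref{Th:Fibred exp law k-spaces} are available throughout.

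For part (a) I would apply Theorem~\ref{Th:Fibred exp law k-spaces} with $X=B$ and $p=1_{B}$. The key observation is that the graph of $q$, the map $y\mapsto(q(y),y)$, is a homeomorphism $Y\cong B\sqcap Y$: it is continuous, its inverse is the restriction of the second projection, and since both sides are $k$-spaces the $k$-ification preserves this identification. Under it, a map $f^{>}:B\sqcap Y\longrightarrow Z$ is precisely a map $\ell:Y\longrightarrow Z$, while a fibrewise map $f^{<}:B\longrightarrow Y!Z$, i.e.\ one with $(q!Z)\circ f^{<}=1_{B}$, is exactly a section of $q!Z$. The defining rule $f^{>}(b,y)=f^{<}(b)(y)$ with $b=q(y)$ becomes $\ell(y)=f^{<}(q(y))(y)$, which is the rule $\theta(\ell)=\ell^{\bullet}$. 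Thus Theorem~\ref{Th:Fibred exp law k-spaces} already supplies a bijection between maps $Y\to Z$ and sections of $q!Z$, and it remains only to match basepoints: unwinding $\ell^{\bullet}(b_{o})=c_{z_{o}}$ shows it is equivalent to $\ell(y)=z_{o}$ for every $y\in Y\vert b_{o}$, i.e.\ $\ell(Y\vert b_{o})\subseteq\{z_{o}\}$, so $\theta$ restricts to the asserted bijection onto $Sec^{o}(q!Z)$.

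For part (b) I would run the same machine with $X=B\times I$ and $p=\pr_{1}$; here $B\times I$ is again a $k$-space by Lemma~\ref{Lem 1}, since $I$ is compact Hausdorff, and the base is still $B$. Now $(y,t)\mapsto((q(y),t),y)$ identifies $Y\times I\cong(B\times I)\sqcap Y$, so maps $f^{>}$ become homotopies $F:Y\times I\longrightarrow Z$, while a fibrewise map $f^{<}:B\times I\longrightarrow Y!Z$ with $(q!Z)\circ f^{<}=\pr_{1}$ is exactly a homotopy through sections of $q!Z$, i.e.\ a vertical homotopy $G$. The relative and endpoint conditions then translate formally: $F\big((Y\vert b_{o})\times I\big)\subseteq\{z_{o}\}$ is equivalent to $G(b_{o},t)=c_{z_{o}}$ for all $t$, so that $G$ is a based vertical homotopy, and restricting to $t=0,1$ recovers, via part (a), $G_{0}=\ell^{\bullet}_{o}$ and $G_{1}=\ell^{\bullet}_{1}$. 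Hence $\ell_{o}\simeq\ell_{1}$ rel $Y\vert b_{o}$ if and only if $\ell^{\bullet}_{o}\simeq\ell^{\bullet}_{1}$ through based vertical homotopies.

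Part (c) is then immediate: the set-level bijection $\theta$ of (a) carries homotopies in $M(Y,Y\vert b_{o};Z,z_{o})$ to based vertical homotopies by (b) and conversely, so $\theta$ descends to the well-defined bijection $[\ell]\mapsto[\ell^{\bullet}]$, which is $\lambda$. I expect the only genuine obstacle to be the bookkeeping in (b), namely confirming that the fibrewise maps $B\times I\to Y!Z$ produced by the exponential law coincide exactly with vertical homotopies in the sense of the Vertical Homotopy definition, and that the \emph{rel} $(Y\vert b_{o})\times I$ condition on $F$ corresponds, uniformly in $t$, to the based condition on $G$; everything else is the transport of a single bijection across two explicit homeomorphisms of pullback spaces.
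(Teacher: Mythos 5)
Your proposal is correct and follows essentially the same route as the paper: part (a) via the homeomorphism $B\sqcap Y\cong Y$ (the paper uses the projection $\pi:B\sqcap Y\to Y$, you use its inverse, the graph of $q$) combined with the Fibred Exponential Law, part (b) via the identification $(B\times I)\sqcap Y\cong(B\sqcap Y)\times I\cong Y\times I$ and the same law, and part (c) as an immediate consequence. Your write-up is in fact somewhat more explicit than the paper's about the basepoint bookkeeping and about why fibrewise maps out of $B$ and $B\times I$ are exactly sections and vertical homotopies, respectively.
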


\begin{proof}
\item[(a)] The domain of $\ell^{\bullet}(b)$ is $Y\vert{b}$ so
$q\circ{\ell}=1_{B}.$ Also
$\ell^{\bullet}(b_{o})=\ell\vert(Y\vert{b_{o}})=c_{z_{o}}$, so
$\ell^{\bullet}$ is base point preserving. If $B\sqcap{Y}$ is
defined as the pullback of $1_{B}$,  and
$$q:Y\longrightarrow{B},$$ then the projection
$$\pi:B\sqcap{Y}\longrightarrow{Y}$$ is a homeomorphism. So we have
a bijective correspondence between maps
$$\ell:Y\longrightarrow{Z}$$  maps
$$\ell\circ{\pi}:B\sqcap{Y}\longrightarrow{Z}$$ and, by the Fibred
Exponential Law,   $$\ell^{\bullet}:B\longrightarrow{Y!Z}.$$ We
notice that $\ell^{\bullet}(b)(y)=\ell\circ\pi(b,y)=\ell(y),$ were
$q(y)=b.$ \item[(b)] It follows by arguments similar to those the
proof of (a) that $$F:(Y\times{I} ;
(Y\vert{b_{o}})\times{I})\longrightarrow{(Z,z_{o})}$$ is
continuous, if and only if $$G:(B\times{I},
\lbrace{b_{o}}\rbrace\times{I})\longrightarrow{(Y!Z,c_{z_{o}})}$$
is continuous, where $F(y,t)=G(b,t)(y)$, for all $y\in{Y}$,
$t\in{I}$ and $b=q(y)$. Moreover,
$F(Y\vert{b_{o}}\times{I})=z_{o}$  if and only if
$G(b_{o}\times{I})(y)=z_{o}$  for all $y\in{Y\vert{b_{o}}}$ and
$$(B\sqcap{Y})\times{I}\cong(B\times{I})\sqcap{Y},$$ thus
$\ell^{\bullet}_{o}\simeq\ell^{\bullet}_{1}$ as required.
\item[(c)] This follows easily from (a) and (b).
\end{proof}
\begin{remark}\label{remark 7} 
As an example, let $q:Y\longrightarrow{B}$ be a map, $Z$ be a
space where $z_{o}\in{Z}$.  Then there is a function
$$\sigma_{z_{o}}:B\longrightarrow{Y!Z}, $$ where $\sigma_{z_{o}}(b)$
is the constant map from $Y|b\longrightarrow{Z}$ with value
$z_{o}$.

Now $\sigma_{z_{o}}$ corresponds, via Corollary 5, part(a), to the
constant map $Y\longrightarrow{Z}$ value $z_{o}$. Hence
$\sigma_{z_{o}}$ is continuous.  It is easily seen that it is also
a section to $q!Z$.

We now introduce some fibrewise terminology. Fibrewise spaces in
the free sense are simply maps of spaces into $B$.  Let
$p:X\longrightarrow{B}$ and $q:Y\longrightarrow{B}$ be fibrewise
spaces in the free sense. Then a fibrewise map from
$p:X\longrightarrow{B}$ to $q:Y\longrightarrow{B}$ in the free
sense is, of course, a map $f:X\longrightarrow{Y}$ such that
$q\circ{f}=p$.

A fibrewise space in the based sense is a pair $(p,s)$, where
$p:X\longrightarrow{B}$ is a map, and $s:B\longrightarrow{X}$ is a
section to $p$. The reader can observe that if $B$ is a point
$\ast$, then $s:\ast\longrightarrow{X}$ is essentially just the
point $s(\ast)\in{X}$, so
$(p:X\longrightarrow{\ast},s:\ast\longrightarrow{X})$ is
essentially just the based space $(X,s(\ast))$.

If $(p,s)$ and $(q,t)$ are fibrewise based spaces, then
$(p\sqcap{q}, \langle{s,t}\rangle)$ is also a fibrewise based
space.

A fibrewise map in the based sense, from $(p,s)$ to $(q,t)$ is a
map $f:X\longrightarrow{Y}$ such that $q\circ{f}=p$ and
$f\circ{s}=t$. The set of based maps of this sort will be denoted
by $M_{B}(X,Y)$.
\end{remark}
\begin{definition}
If $f,g\in{M_{B}(X,Y)}$ then a fibrewise based homotopy from $f$
to $g$ is a fibrewise map $F:X\times{I}\longrightarrow{Y}$, and
based homotopy such that $F(x,0)=f(x)$ and $F(x,1)=g(x)$, for all
$x\in{X}$.

Thus a fibrewise based homotopy from $f$ to $g$ is just a homotopy
in the ordinary sense from $f$ to $g$, which is a fibrewise based
map at each stage of the deformation. We then write
$f\simeq_{B}{g}$.
\end{definition}

\begin{definition}
The fibrewise tertiary system $(q,s,m)$ consists of a fibrewise
based space $Y$ over $B$, i.e. a  map $q:Y\longrightarrow{B}$, and
a section $s:B\longrightarrow{Y}$ to $q$, and a fibrewise based
map $m:Y\sqcap{Y}\longrightarrow{Y}$, called a fibrewise
multiplication.
\end{definition}
\begin{definition}
The fibrewise multiplication $m$ is fibrewise homotopy commutative
if $m\simeq_{B}{m\circ{\tau}}$, where $\tau$ is the switching
fibrewise homeomorphism
$\tau:Y\sqcap{Y}\longrightarrow{Y\sqcap{Y}}$ defined by
$\tau(y,y^{'})=(y^{'},y)$, for $(y,y^{'})\in{Y\sqcap{Y}}.$
\end{definition}

\begin{definition}
The fibrewise multiplication $m$ is fibrewise homotopy associative
if $$m(m\sqcap{1_{Y}})\simeq_{B}m(1_{Y}\sqcap{m})$$ where
$$Y\sqcap{Y}\sqcap{Y}\stackrel{{1_{B}}\sqcap{m}}\longrightarrow{Y\sqcap{Y}}
\stackrel{m}\longrightarrow{Y},$$  and
$$Y\sqcap{Y}\sqcap{Y}\stackrel{m\sqcap{1_{B}}}\longrightarrow{Y\sqcap{Y}}
\stackrel{m}\longrightarrow{Y}.$$
\end{definition}

\begin{definition}
The fibrewise multiplication $m$ has a fibrewise homotopy
identity, or satisfies the Hopf condition if
$$m(1_{Y}\sqcap{(s\circ{q})})\triangle\,\simeq_{B}\,1_{Y}\,\simeq_{B}\,m((s\circ{q})\sqcap{1_{Y}})\triangle,$$
where $\triangle:Y\longrightarrow{Y\sqcap{Y}}$ denotes the
diagonal map,
$$Y\stackrel{\triangle}\longrightarrow{Y\sqcap{Y}}\stackrel{1_{Y}\sqcap{(s\circ{q}})}\longrightarrow{Y\sqcap{Y}}\stackrel{m}\longrightarrow{Y},$$ and
$$Y\stackrel{\triangle}\longrightarrow{Y\sqcap{Y}}\stackrel{(s\circ{q})\sqcap{1_{Y}}}\longrightarrow{Y\sqcap{Y}}\stackrel{m}\longrightarrow{Y}.$$

\end{definition}

\begin{definition}
The fibrewise based map $\mu:Y\longrightarrow{Y}$ is a fibrewise
homotopy inversion for the fibrewise multiplication $m$ if
$$m(1_{Y}\sqcap{\mu})\triangle\,\simeq_{B}\,s\circ{q}\,\simeq_{B}\,m({\mu}\sqcap{1_{Y}})\triangle,$$
where
$$Y\stackrel{\triangle}\longrightarrow{Y\sqcap{Y}}\stackrel{1_{Y}\sqcap{\mu}}\longrightarrow{Y\sqcap{Y}}\stackrel{m}\longrightarrow{Y},$$ and
$$Y\stackrel{\triangle}\longrightarrow{Y\sqcap{Y}}\stackrel{\mu\sqcap{1_{Y}}}\longrightarrow{Y\sqcap{Y}}\stackrel{m}\longrightarrow{Y}.$$
A homotopy associative fibrewise tertiary system satisfying the
Hopf condition, and for which the fibrewise multiplication admits
an inversion, is called a fibrewise H-group. If a fibrewise
H-group is fibrewise homotopy commutative, then it will be said to
be homotopy Abelian.\newline More details concerning fibrewise
homotopy are given in a \textit{locus classicus} \cite{P.Booth5}, \cite{P. Booth6}
\cite{I.M. James1}, \cite{I.M. James2}, \cite{D. Pupe}.
\end{definition}
\begin{proposition}
Let $Z$ be an $H-group$, $B$ a weak Hausdorff space, and
$q:Y\longrightarrow{B}$  a map. Then there is a fibrewise map
$$n:Y!Z\,\times\,{Y!Z}\longrightarrow{Y!Z},$$
$n(f,g)=m(f\times{g})\triangle_{b}$, where $b\in{B}$,
$f,g\in{M(Y|b,Z)}$, $m$ denotes the operation on $Z$,
$\triangle_{b}$ is the diagonal map for $Y|b$, and
$m(f\times{g})\triangle_{b}$ is the following composite of maps
$$Y|b\stackrel{\triangle_{b}}\longrightarrow{Y|b\times{Y|b}}\stackrel{f\times{g}}\longrightarrow{Z\times{Z}}\stackrel{m}\longrightarrow{Z}.$$
Then, defining $\sigma_{e}$ as in the example of Remark \ref{remark 7},  the
tertiary system
$$(q!Z,\sigma_{(e)},n)$$ is a fibrewise H-group. Further, if $Z$
is homotopy Abelian, then $(q!Z,\sigma_{e},n)$ is fibrewise
homotopy Abelian.
\end{proposition}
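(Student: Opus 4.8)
The plan is to realize $n$, together with every structure map and every homotopy that enters the axioms, as the \emph{fibrewise application} of the corresponding datum on $Z$, and then to extract continuity and the homotopies in one stroke from the Fibred Exponential Law (Theorem \ref{Th:Fibred exp law k-spaces}). Concretely, for a map $\phi:Z^{n}\longrightarrow Z$ I would define $\phi_{!}:(Y!Z)^{\sqcap n}\longrightarrow Y!Z$ fibrewise by $\phi_{!}(f_{1},\dots,f_{n})(y)=\phi(f_{1}(y),\dots,f_{n}(y))$, where $(f_{1},\dots,f_{n})$ lies in the fibre over $b$ and $y\in Y|b$. With this notation $n=m_{!}$, the prospective inversion is $\mu_{!}$ for the inversion $\mu$ of $Z$, and the section $\sigma_{e}$ of Remark \ref{remark 7} is the fibrewise version of the constant point $e\in Z$. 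The whole proof then reduces to two facts: that each $\phi_{!}$ is continuous, and that a homotopy between maps $Z^{n}\to Z$ induces a fibrewise homotopy between the associated $\phi_{!}$.

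For continuity, note first that the domain relevant to a fibrewise multiplication is the fibred product $(Y!Z)\sqcap(Y!Z)$ (the only place where $m(f(y),g(y))$ makes sense), so I read the $n$ of the statement as defined there. Applying Theorem \ref{Th:Fibred exp law k-spaces} to the identity $Y!Z\to Y!Z$, which is a fibrewise map, produces the evaluation $\mathrm{ev}:(Y!Z)\sqcap Y\longrightarrow Z$, $\mathrm{ev}(f,y)=f(y)$, which is therefore continuous. Now set $X=(Y!Z)\sqcap(Y!Z)$ with its projection to $B$; by the Fibred Exponential Law a fibrewise map $n:X\longrightarrow Y!Z$ is the same datum as a map $n^{>}:X\sqcap Y\longrightarrow Z$, and I would take $n^{>}((f,g),y)=m(f(y),g(y))$. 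This $n^{>}$ factors as
$$X\sqcap Y\xrightarrow{\langle \mathrm{ev}\circ(\pi_{1}\sqcap 1_{Y}),\,\mathrm{ev}\circ(\pi_{2}\sqcap 1_{Y})\rangle}Z\times Z\xrightarrow{\,m\,}Z,$$
a composite of continuous maps, so $n^{>}$ and hence $n=m_{!}$ is continuous; the same evaluation trick handles every $\phi_{!}$. Finally, since $m$ is a based map one has $m(e,e)=e$, whence $n\circ\langle\sigma_{e},\sigma_{e}\rangle=\sigma_{e}$, so $n$ is a fibrewise \emph{based} map and $(q!Z,\sigma_{e},n)$ is a fibrewise tertiary system.

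The $H$-group axioms then transport formally, using that $\phi_{!}$ is compatible with composition, products, diagonals and the constant section in the evident pointwise way: for instance $n\circ(n\sqcap 1_{Y!Z})=(m\circ(m\times 1_{Z}))_{!}$, and $n(1_{Y!Z}\sqcap(\sigma_{e}\circ q!Z))\triangle$ equals $\psi_{!}$ for $\psi(z)=m(z,e)$, each identity being checked by evaluating at $y\in Y|b$. To transport a homotopy $\Phi:Z^{n}\times I\longrightarrow Z$ I would again invoke Theorem \ref{Th:Fibred exp law k-spaces}: taking $X=(Y!Z)^{\sqcap n}\times I$ with projection to $B$ ignoring the $I$-coordinate, the map $X\sqcap Y\to Z$ sending $((f_{1},\dots,f_{n},t),y)\mapsto\Phi(f_{1}(y),\dots,f_{n}(y),t)$ is continuous — once more a composite of evaluations, the projection to $I$ and $\Phi$ — and its adjoint is a fibrewise homotopy from $(\Phi_{0})_{!}$ to $(\Phi_{1})_{!}$. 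Feeding in the associativity homotopy of $m$ gives fibrewise homotopy associativity of $n$; the two homotopy-identity homotopies of $Z$ give the Hopf condition; the inversion homotopies of $Z$ show that $\mu_{!}$ is a fibrewise homotopy inversion; and when $Z$ is homotopy Abelian the homotopy $m\simeq m\circ T$, with $T$ the switch of $Z\times Z$, transports to $n\simeq_{B}n\circ\tau$, since $\tau$ is exactly the fibrewise avatar of $T$.

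The main obstacle, and the only point that is not purely formal, is the bookkeeping required to apply the Fibred Exponential Law cleanly: one must keep track of the several fibred products and of the canonical homeomorphisms, such as those identifying $(Y!Z)^{\sqcap n}\sqcap Y$ with the appropriate fibre data and letting the homotopy parameter pass through the $!$-construction, exactly as in part (b) of the Section Rule (Corollary 5), where $(B\sqcap Y)\times I\cong(B\times I)\sqcap Y$ was used. Once continuity of $\mathrm{ev}$ and of each transported homotopy is secured, every axiom reduces to a pointwise identity on fibres that is inherited verbatim from the corresponding axiom for the $H$-group $Z$.
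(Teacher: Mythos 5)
Your proposal is correct and follows essentially the same route as the paper: the paper's own proof is a one-paragraph sketch stating that the result is "a direct generalization" of the standard fact that $M(Y,Z)$ is an $H$-group when $Z$ is, "using the fibred exponential law of Theorem 5 rather than the usual exponential law," which is precisely the strategy you carry out. Your write-up simply supplies the details the paper omits (the evaluation map, the transport of homotopies through the $!$-construction, and the correction of the domain to the fibred product $(Y!Z)\sqcap(Y!Z)$).
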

\begin{proof}
If $Y$ is a space and $Z$ is an $H-group$, then the operation
$$n:M(Y,Z)\times{M(Y,Z)}\longrightarrow{M(Y,Z)},$$ defined by
$n(f,g)=n\circ{(f\times{g})}\circ{\triangle_{Y}}$, together with
the identity map $c_{e}:Y\longrightarrow{Z}$, makes $M(Y,Z)$ into
an $H-group$ in an obvious fashion. If $Z$ is homotopy Abelian,
then so also is $M(Y,Z)$. The proof of this proposition is a
direct generalization of that argument, using the fibred
exponential law of Theorem 5, rather than the usual exponential
law for spaces.
\end{proof}
\begin{proposition}
If $(q:Y\longrightarrow{B},\, t:B\longrightarrow{Y},\,
m:Y\sqcap{Y}\longrightarrow{Y})$  is a fibrewise homotopy Abelian
$H-group$, then $Sec^{o}(q)$ is a homotopy Abelian $H-group.$ Thus
if $t_{1},t_{2}\in Sec^{o}(p)$, the operation $+_{B}$ on
$Sec^{o}(q)$ is defined by
$$t_{1}+_{B} t_{2}=m\circ\langle{t_{1},t_{2}}\rangle,$$ and the identity point for $Sec^{o}(q)$
is $t$.
\end{proposition}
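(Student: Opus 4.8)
The plan is to transport the fibrewise $H$-group structure on $Y$ to $Sec^o(q)$ by postcomposing sections with the fibrewise structure maps, in exact parallel with the way the previous proposition equips $Y!Z$ with a fibrewise multiplication. For $t_1,t_2\in Sec^o(q)$ the pairing $\langle t_1,t_2\rangle\colon B\to Y\sqcap Y$, $b\mapsto(t_1(b),t_2(b))$, is defined because $qt_1=qt_2=1_B$, and I set $t_1+_B t_2=m\circ\langle t_1,t_2\rangle$; the candidate inverse of $t_1$ is $\mu\circ t_1$, and the identity element is the given section $t$.

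First I would verify that these stay inside $Sec^o(q)$ and are continuous. Since $m$ and $\mu$ are fibrewise, $q\circ(t_1+_B t_2)=1_B$ and $q\circ(\mu\circ t_1)=1_B$; since they are fibrewise \emph{based}, they send the fibrewise basepoint to the fibrewise basepoint, so $t_1+_B t_2$ and $\mu\circ t_1$ are again based sections. For continuity of $+_B\colon Sec^o(q)\times Sec^o(q)\to Sec^o(q)$ I would run the adjunction of Theorem~\ref{The 3}: the map $Sec^o(q)\times Sec^o(q)\times B\to Y$, $(t_1,t_2,b)\mapsto m(t_1(b),t_2(b))$, is the composite of the evaluations of Proposition~\ref{Pro 9} with $m$ and is therefore continuous, so its adjoint is the desired continuous $+_B$ with image in the subspace $Sec^o(q)\subseteq M^o(B,Y)$; the inversion is handled the same way.

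Next I would deduce each $H$-group axiom by precomposing the corresponding fibrewise homotopy with a pairing of sections. The computational heart is a handful of identities of the form $m\circ\langle t_1,t_2\rangle=\Phi\circ t_1$, where $\Phi$ is one of the fibrewise composites appearing in the definitions; for example $\langle t_1,t\rangle=(1_Y\sqcap(t\circ q))\triangle\circ t_1$, so that $t_1+_B t=\big(m(1_Y\sqcap(t\circ q))\triangle\big)\circ t_1$, and likewise $\langle t_1,\mu\circ t_1\rangle=(1_Y\sqcap\mu)\triangle\circ t_1$ and $\langle m\langle t_1,t_2\rangle,t_3\rangle=(m\sqcap 1_Y)\circ\langle t_1,t_2,t_3\rangle$. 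The Hopf condition gives a fibrewise based homotopy $m(1_Y\sqcap(t\circ q))\triangle\simeq_B 1_Y$; composing it with $t_1$ produces a homotopy $t_1+_B t\simeq t_1$, and symmetrically $t+_B t_1\simeq t_1$. In the same manner fibrewise homotopy associativity of $m$ (precomposed with $\langle t_1,t_2,t_3\rangle$) yields associativity of $+_B$; the inversion homotopy $m(1_Y\sqcap\mu)\triangle\simeq_B t\circ q$ (precomposed with $t_1$) yields $t_1+_B(\mu\circ t_1)\simeq(t\circ q)\circ t_1=t$; and, under the Abelian hypothesis, the homotopy $m\simeq_B m\tau$ (precomposed with $\langle t_1,t_2\rangle$) yields $t_1+_B t_2\simeq t_2+_B t_1$.

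The step I expect to need the most care — and the only genuinely topological one — is to confirm that each of these composites is not merely a continuous map $B\times I\to Y$ but a continuous path in $Sec^o(q)$, i.e. that every intermediate stage is again a based section and that the stages vary continuously. The former is automatic, because the fibrewise homotopies are fibrewise \emph{based} at each stage, so they compose with sections to based sections exactly as in the well-definedness step. The latter is once more the exponential law of Theorem~\ref{The 3}: a map $B\times I\to Y$ all of whose stages are based sections is adjoint to a map $I\to M^o(B,Y)$ with image in $Sec^o(q)$, hence to a path in $Sec^o(q)$. Granting these continuity facts, the argument is the evident fibrewise generalisation of the classical fact, invoked in the previous proposition, that $M(Y,Z)$ is a (homotopy Abelian) $H$-group whenever $Z$ is.
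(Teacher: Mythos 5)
Your proposal is correct and supplies, in the expected way, exactly the details that the paper compresses into ``The proof is routine'': you transport the fibrewise structure maps along sections, reduce each axiom to precomposing the corresponding fibrewise based homotopy with a pairing of sections, and use the exponential law of Theorem~\ref{The 3} together with the evaluation map of Proposition~\ref{Pro 9} for every continuity claim. The one point worth stating slightly more carefully is that the witnessing homotopies should be produced uniformly in the sections --- as continuous maps $Sec^{o}(q)\times Sec^{o}(q)\times I\longrightarrow Sec^{o}(q)$ rather than as one path per pair of sections --- but the same adjunction argument you already give for $+_{B}$ yields this with no extra work.
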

\begin{proof}
The proof is routine.
\end{proof}
\begin{corollary}
If $(Z,z_{o})$ is an Abelian $H-group$, $(B,b_{o})$ is based weak
Hausdorff space, and $q:Y\longrightarrow{B}$ is a map, then
$Sec^{o}(q!Z)$ is an Abelian $H-group$, and
$\Pi_{o}(Sec^{o}(q!Z))$ is an Abelian group.
\end{corollary}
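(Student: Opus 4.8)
The plan is to obtain this as an immediate concatenation of the two preceding Propositions, followed by the standard passage from a homotopy Abelian $H$-group to its group of components. First I would feed the Abelian $H$-group $(Z,z_{o})$ into the Proposition that constructs the fibrewise multiplication $n$ on $q!Z$. Since $B$ is weak Hausdorff and $q:Y\longrightarrow{B}$ is a map, that result already provides the fibrewise $H$-group $(q!Z,\sigma_{e},n)$, and its final clause — that $q!Z$ is fibrewise homotopy Abelian whenever $Z$ is homotopy Abelian — applies here because $(Z,z_{o})$ is assumed Abelian. Thus $(q!Z,\sigma_{e},n)$ is a fibrewise homotopy Abelian $H$-group, which is precisely the hypothesis required by the next Proposition.

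Next I would apply the Proposition on sections with $(q!Z,\sigma_{e},n)$ playing the role of $(q,t,m)$. This yields directly that $Sec^{o}(q!Z)$, equipped with the operation $t_{1}+_{B}t_{2}=n\circ\langle{t_{1},t_{2}}\rangle$ and identity point $\sigma_{e}$, is a homotopy Abelian $H$-group, which establishes the first assertion of the Corollary. The only care needed is to check that the basepoint and weak Hausdorff hypotheses carried by $(B,b_{o})$ match those demanded by that Proposition, which they do since $\sigma_{e}$ is the based section produced in Remark \ref{remark 7}.

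Finally I would pass to homotopy classes. The essential point is to identify $\Pi_{o}(Sec^{o}(q!Z))$, the set of based sections of $q!Z$ taken up to based vertical homotopy, with the path components of the section space $Sec^{o}(q!Z)$: by the Fibred Exponential Law of Theorem \ref{Th:Fibred exp law k-spaces} and Corollary 5, a based vertical homotopy of sections corresponds exactly to a path in $Sec^{o}(q!Z)$, so the two descriptions coincide. Because $+_{B}$ is continuous it respects path components and therefore descends to $\Pi_{o}(Sec^{o}(q!Z))$; the homotopy associativity, homotopy identity $\sigma_{e}$, homotopy inversion and homotopy commutativity of the $H$-group become genuine associativity, a two-sided identity, inverses and commutativity on the level of components. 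Hence $\Pi_{o}(Sec^{o}(q!Z))$ is an Abelian group. The main obstacle — indeed the only non-formal step — is this last identification of $\Pi_{o}$ with $\pi_{0}$ of the section space and the verification that the $H$-structure descends cleanly; the rest is a direct composition of the two Propositions.
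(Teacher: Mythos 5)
Your proposal is correct and follows exactly the route the paper intends: the Corollary is stated without proof precisely because it is the immediate concatenation of the two preceding Propositions (the fibrewise $H$-group structure $(q!Z,\sigma_{e},n)$ from the first, fed into the second to make $Sec^{o}(q!Z)$ a homotopy Abelian $H$-group), followed by the standard passage to $\Pi_{o}$. Your extra care in identifying $\Pi_{o}(Sec^{o}(q!Z))$ with path components of the section space and checking that $+_{B}$ descends is a welcome elaboration of a step the paper takes for granted, but it is the same argument.
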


\begin{theorem}
If $(Z,z_{o})$ is an Abelian $H-group$, $(B,b_{o})$ is based weak
Hausdorff space, and $q:Y\longrightarrow{B}$ is a map, then
\item[(a)] the set $$[Y,Y\vert{b_{o}};Z,z_{o}]$$ carries an
Abelian group structure, which is defined by pointwise addition of
homotopy classes, and \item[(b)] the bijection of corollary 5,
part (c), i.e.
$$\lambda:[Y,Y\vert{b_{o}};Z,z_{o}]^{o}\approx\Pi_{o}(Sec^{o}(q!Z))$$
is an isomorphism.
\end{theorem}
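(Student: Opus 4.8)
The plan is to transport the Abelian group structure on $\Pi_{o}(Sec^{o}(q!Z))$, which is available from the preceding Corollary, across the bijection $\lambda$ of the Section Rule (Corollary 5), and to verify that what results on $[Y,Y\vert b_{o};Z,z_{o}]$ is exactly pointwise addition. This settles (a) and the isomorphism assertion of (b) simultaneously, so I would treat the two parts together rather than in isolation.

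First I would introduce the pointwise operation directly. Since $Z$ is an Abelian $H$-group with multiplication $m$, define an operation on $M(Y,Y\vert b_{o};Z,z_{o})$ by $(\ell_{1}+\ell_{2})(y)=m(\ell_{1}(y),\ell_{2}(y))$. Exactly as in the argument that $M(Y,Z)$ is an $H$-group whenever $Z$ is (used in the proposition constructing $n$), this makes $M(Y,Y\vert b_{o};Z,z_{o})$ into an $H$-group with identity the constant map $c_{z_{o}}$, which is homotopy Abelian because $m$ is. Passing to homotopy classes through the pair $(Y\times I,(Y\vert b_{o})\times I)$ then gives the Abelian group structure on $[Y,Y\vert b_{o};Z,z_{o}]$ asserted in (a). The only points to check here are that $\ell_{1}+\ell_{2}$ again carries $Y\vert b_{o}$ to $z_{o}$, which holds because $z_{o}$ is the homotopy identity of $m$, and that the $H$-group identities descend to homotopy classes, both of which are routine.

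The heart of the argument is the compatibility of this pointwise addition with the fibrewise operation $+_{B}$ on $Sec^{o}(q!Z)$. Recall that $+_{B}$ is given by $t_{1}+_{B}t_{2}=n\circ\langle t_{1},t_{2}\rangle$, where $n$ is the fibrewise multiplication constructed earlier, with $n(f,g)=m(f\times g)\triangle_{b}$ for $f,g\in M(Y\vert b,Z)$. Evaluating on a fibre $Y\vert b$, the section $\ell_{1}^{\bullet}+_{B}\ell_{2}^{\bullet}$ sends $b$ to the partial map $y\mapsto m(\ell_{1}^{\bullet}(b)(y),\ell_{2}^{\bullet}(b)(y))=m(\ell_{1}(y),\ell_{2}(y))=(\ell_{1}+\ell_{2})(y)$, which is precisely $(\ell_{1}+\ell_{2})^{\bullet}(b)$. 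Hence $\theta(\ell_{1})+_{B}\theta(\ell_{2})=\theta(\ell_{1}+\ell_{2})$, so the bijection $\theta$ of Corollary 5 intertwines the two $H$-group structures. Combined with the homotopy correspondence of Corollary 5(b), this shows that $\lambda$ carries the pointwise sum of homotopy classes to the operation $+_{B}$ on $\Pi_{o}(Sec^{o}(q!Z))$, so $\lambda$ is an isomorphism, proving (b); and since $\Pi_{o}(Sec^{o}(q!Z))$ is Abelian by the preceding Corollary, this re-confirms that the pointwise structure of (a) is indeed an Abelian group.

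The main obstacle I anticipate is bookkeeping rather than conceptual: matching the fibrewise multiplication $n$ with pointwise $m$ under the fibred exponential law requires unwinding the definitions of $\ell^{\bullet}$, $\triangle_{b}$ and $n$ carefully, and one must stay attentive to whether the $H$-group identities hold strictly or only up to based vertical homotopy, so that the verification that $\theta$ respects the operations remains valid after passage to homotopy classes. Once that identification is secured, everything else follows formally from the results already established.
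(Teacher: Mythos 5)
Your proposal is correct and follows exactly the route the paper intends: the paper's own proof consists only of the remark that both group structures are induced by the $H$-group structure on $Z$ and that the verification that $\lambda$ is an isomorphism is routine, and your computation $(\ell_{1}^{\bullet}+_{B}\ell_{2}^{\bullet})(b)(y)=m(\ell_{1}(y),\ell_{2}(y))=(\ell_{1}+\ell_{2})^{\bullet}(b)(y)$ is precisely that routine verification carried out. No discrepancy to report.
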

\begin{proof}
(a) is routine. (b) The two group structures are both induced by
the H-group structure on $Z$; it is routine to verify that, as
expected, $\lambda$ is an isomorphism.
\end{proof}

\begin{proposition}
Let $B$, $Y$ and $Z$ be spaces and $B$ be weak Hausdorff.  If
$q:Y\longrightarrow{B}$ is a Hurewicz fibration, then $q!Z$ is
also a Hurewicz fibration.
\end{proposition}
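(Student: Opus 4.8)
The plan is to verify the covering homotopy property for $q!Z$ against an arbitrary $k$-space $A$ and to transport the entire lifting problem across the Fibred Exponential Law (Theorem \ref{Th:Fibred exp law k-spaces}), turning it into an ordinary extension problem for maps into $Z$. So suppose we are given $f:A\times\{0\}\to Y!Z$ and a homotopy $F:A\times I\to B$ with $(q!Z)\circ f=F(-,0)$; I must produce $\widetilde F:A\times I\to Y!Z$ with $\widetilde F(-,0)=f$ and $(q!Z)\circ\widetilde F=F$. Here $A\times I$ is a $k$-space by Lemma \ref{Lem 1}. Regarding $A\times I$ as a space over $B$ via $F$, and $A\times\{0\}$ as a space over $B$ via $F(-,0)$, the Fibred Exponential Law converts fibrewise maps $A\times I\to Y!Z$ into maps $(A\times I)\sqcap Y\to Z$, and fibrewise maps $A\times\{0\}\to Y!Z$ into maps $(A\times\{0\})\sqcap Y\to Z$. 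Writing $W=(A\times I)\sqcap Y$ and $W_{0}=(A\times\{0\})\sqcap Y$, the initial datum $f$ becomes a map $f^{>}:W_{0}\to Z$, and, because the correspondence is a bijection natural in the variable over $B$ and is given by the explicit rule $f^{>}(x,y)=f^{<}(x)(y)$, the condition $\widetilde F(-,0)=f$ is exactly the condition $\widetilde F^{>}|_{W_{0}}=f^{>}$. Thus constructing the lift $\widetilde F$ is the same as extending $f^{>}$ across the inclusion $W_{0}\hookrightarrow W$ to a map $\widetilde F^{>}:W\to Z$.

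First I would check that the projection $\pi:W=(A\times I)\sqcap Y\to A\times I$ is itself a Hurewicz fibration in the $k$-sense. This map is the pullback of $q$ along $F$, and the verification is a direct application of the covering homotopy property of $q$ together with the universal property of the $k$-sense pullback (Remark \ref{remark 3} and Theorem \ref{The 2 Universal Pro initial  topo}): given a testing homotopy into $A\times I$ from a $k$-space together with a lift of its initial stage into $W$, one projects to $Y$, lifts that homotopy through $q$, and repackages the base homotopy and the lifted $Y$-homotopy as a single map into $W$ by the pullback universal property. One also notes that $W_{0}=\pi^{-1}(A\times\{0\})$.

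The heart of the argument is then to produce a retraction $r:W\to W_{0}$ with $r|_{W_{0}}=\id$; given such an $r$, the map $\widetilde F^{>}=f^{>}\circ r:W\to Z$ restricts to $f^{>}$ on $W_{0}$ and hence solves the extension problem, and transporting it back across the Fibred Exponential Law yields the desired lift $\widetilde F$. To build $r$ I would lift the strong deformation retraction $G:(A\times I)\times I\to A\times I$, $G((a,t),s)=(a,(1-s)t)$, of $A\times I$ onto $A\times\{0\}$ through the fibration $\pi$: starting from $\id_{W}$ at $s=0$, the covering homotopy property of $\pi$ produces $R:W\times I\to W$ with $\pi\circ R(-,s)=G(-,s)\circ\pi$, and since $G(-,1)$ takes values in $A\times\{0\}$ the final stage $R(-,1)$ takes values in $\pi^{-1}(A\times\{0\})=W_{0}$. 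This $R(-,1)$ is the candidate retraction.

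The main obstacle is precisely that $R(-,1)$ need not restrict to the identity on $W_{0}$: the bare covering homotopy property does not force the lift to be stationary over the fixed set $A\times\{0\}$ of the deformation retraction, so $r|_{W_{0}}=\id$ is not automatic. To force this I would use the \emph{relative} covering homotopy property of $\pi$ along the inclusion $W_{0}\hookrightarrow W$ — over $W_{0}$ the constant homotopy already lifts $G$, since $G$ fixes $A\times\{0\}$ pointwise, and one extends this prescribed partial lift over all of $W$. This step needs that $W_{0}\hookrightarrow W$ is a cofibration, which holds because $W_{0}$ is the preimage under the fibration $\pi$ of the closed cofibration $A\times\{0\}\hookrightarrow A\times I$; establishing this cofibration property in the $k$-category is the one genuinely technical point. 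An alternative route that sidesteps cofibrations is to lift instead through a \emph{regular} lifting function for $q$ (one that returns constant paths over constant paths): applying it to the tracks $s\mapsto F(a,(1-s)t)$ defines $r$ directly, and regularity makes $r$ fix $W_{0}$ because those tracks are constant whenever $t=0$. Either way, once $r$ is in hand everything else is routine bookkeeping through the Fibred Exponential Law.
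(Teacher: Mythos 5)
Your proposal follows essentially the same route as the paper's proof: both transport the lifting problem through the Fibred Exponential Law into an extension problem over the pullbacks $(A\times I)\sqcap Y\supset(A\times\{0\})\sqcap Y$, and both reduce everything to the existence of a retraction $R:(A\times I)\sqcap Y\to(A\times\{0\})\sqcap Y$, after which the desired lift is $f^{>}\circ R$ read back through the exponential law. The paper simply cites Booth's Lemma 4.2 for that retraction (asserting the $k$-case is similar), so your more detailed discussion of how to actually construct $R$ --- including the genuine subtlety that a naive lift of the deformation retraction need not fix $W_{0}$ --- goes beyond what the paper records, but the overall argument is the same.
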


\begin{proof}
This is just the argument that proves Theorem 4.1 of \cite{P. Booth6}, but
reinterpreted in $k-context$.  We assume that
$$F:A\times{I}\longrightarrow{B}$$ is a homotopy and the restriction $F\mid{A\times{0}}$ is denoted by $F_{o}$.
We then have pullback spaces $(A\times{I})\sqcap{Y}$, and
$(A\times{0})\sqcap{Y}$, induced by the homotopy $F$ and the map
$F_{o}$, respectively, and associate projections
$$F^{\ast}q:(A\times{I})\sqcap{Y}\longrightarrow{A\times{I}},$$
$$(F_{o})^{\ast}q:(A\times{0})\sqcap{Y}\longrightarrow{A\times{I}},$$

$$q^{\ast}F:(A\times{I})\sqcap{Y}\longrightarrow{Y},$$
$$q^{\ast}F_{o}:(A\times{0})\sqcap{Y}\longrightarrow{Y},$$
and such that
$$q\circ{(q^{\ast}F)}=F\circ(F^{\ast}q)$$ and
$$q\circ{(q^{\ast}F_{o})}=F_{o}\circ{(F_{o})^{\ast}}q.$$
We recall that $(A\times{0})\sqcap{Y}$ is a retract of
$(A\times{I})\sqcap{Y}$. The proof of this, in the usual
topological context, is given in [B6, Lm. 4.2]; the $k-case$
proof is similar. Let
$$k^{<}:A\times{0}\longrightarrow{Y!Z}$$ be a map  such that
$(q!Z)\circ{k^{<}}=F_{o}$. It follows, by the Fibred Exponential
Law for $k-spaces$, that there is an associated map
$$k^{>}:(A\times{0})\sqcap{Y}\longrightarrow{Z}$$ defined by
$k^{>}(a,0,y)=k^{<}(a,0)(y)$ where
$(a,0,y)\in({A}\times{0})\sqcap{Y}$. Now $(A\times{0})\sqcap{Y}$
is known to be a retract of $(A\times{I})\sqcap{Y}$ (compare with
\cite[Lemma 4.2]{P. Booth6}).

Let
$$R:(A\times{I})\sqcap{Y}\longrightarrow{(A\times{0})\sqcap{Y}}$$
be a retraction.  Then the composite
$$k^{>}\circ{R}:(A\times{I})\sqcap{Y}\longrightarrow{Z}$$
corresponds, via the fibred exponential law to
$$K:A\times{I}\longrightarrow{Y!Z},$$ where
$K(a,t)(y)=(k^{>}\circ{R})(a,t,y)$, and
$(a,t,y)\in(A\times{I})\sqcap{Y}.$ Then $K$ is fibrewise over $B$,
i.e. $(q!Z)K(a,t)=F(a,t),$ so $(q!Z)\circ{K}=F.$  Also if
$(a,0,y)\in(A\times{0})\sqcap{Y}$,
$$K(a,0)(y)=(k^{>}\circ{R})(a,0,y)=k^{>}(a,0,y)=k^{<}(a,0)(y).$$
So $K(a,0)=k^{<}(a,0)$  for $a\in{A}$. i.e. $K$ extends $k^{<}.$
Thus $K$ lifts $F$  and extends $k^{<},$ and $q!Z$ is a Hurewicz
fibration.
\end{proof}

\section{Applications to Homotopy Theory}

We will now compare the main result of \cite{P. Booth6} to the result of our
section. \textbf{In this section we do not assume that spaces
are $k-spaces$, unless we specifically say so.}

\begin{theorem}\textbf{Fibred Exponential Law.} \cite[Th. 3.3]{P. Booth6} Let $B$ be a Hausdorff
space, $Z$ be a  space, and $p:X\longrightarrow{B}$ and
$q:Y\longrightarrow{B}$ be a maps.
\begin{itemize}
\item[(a)] \textbf{Proper Condition:} If
$f^{>}:X\sqcap{Y}\longrightarrow{Z}$ is a map, then the rule
$f^{<}(x)(y)=f^{>}(x,y)$ determines a fibrewise map
$f^{>}:X\longrightarrow{Y!Z}$, where $p(x)=q(y)$.  Thus $f^{<}$ is
a map such that $(q!Z)\circ{f^{<}}=p$. \item[(b)]
\textbf{Admissible Condition:} Let us assume that either
\item[(i)]$(X,Y)$ is an exponential pair of spaces, or
\item[(ii)]$W$ is a space, $p:B\times{W}\longrightarrow{B}$ the
projection, and $Y\times{W}$ a $k-space$. Then, given a fibrewise
map $f^{>}:X\longrightarrow{Y!Z}$, the above rule determines a map
$f^{>}:X\sqcap{Y}\longrightarrow{Z}$.
\end{itemize}
\end{theorem}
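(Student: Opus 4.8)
The plan is to mirror the proof of the $k$-space version, Theorem \ref{Th:Fibred exp law k-spaces}, but to substitute the ordinary proper and admissible conditions of Hu for their $k$-analogues, and to let the two alternative hypotheses (i)--(ii) govern exactly the one step where the ordinary admissible condition is genuinely needed. Throughout I use that, because $B$ is Hausdorff, $\bigtriangleup_{B}$ is closed in $B\times B$, so the pullback $X\sqcap Y=(p\times q)^{-1}(\bigtriangleup_{B})$ is closed in $X\times Y$ and the partial-map machinery of Definition \ref{Def: Partial Maps} applies.

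For part (a), the proper condition, I would first extend $f^{>}$ across all of $X\times Y$ by sending the complement of the pullback to the adjoined point: define $g^{>}:X\times Y\to Z^{\sim}$ by $g^{>}(x,y)=f^{>}(x,y)$ when $p(x)=q(y)$ and $g^{>}(x,y)=\omega$ otherwise. This is precisely the extension $(f^{>})^{\sim}$, and it is continuous because $X\sqcap Y$ is closed. Next I would apply the ordinary proper condition \cite[Ch.~V, Lem.~3.1]{Sze-Tsen Hu} (with $Z^{\sim}$ in the role of the target) to obtain a continuous adjoint $g^{<}:X\to M(Y,Z^{\sim})$, $g^{<}(x)(y)=g^{>}(x,y)$. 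Finally, recalling from Remark \ref{remark 4} that $Y!Z$ carries the initial topology relative to $i:Y!Z\to M(Y,Z^{\sim})$ and $q!Z:Y!Z\to B$, I would observe that $i\circ f^{<}=g^{<}$ is continuous and that $(q!Z)\circ f^{<}=p$ is continuous by hypothesis; the universal property of the initial topology then forces $f^{<}$ continuous. This direction needs no hypothesis on $(X,Y)$, exactly as in the $k$-case.

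For part (b), the admissible condition, I would run the correspondence backwards. Starting from a fibrewise $f^{<}:X\to Y!Z$, set $g^{<}=i\circ f^{<}:X\to M(Y,Z^{\sim})$, which is continuous. The crux is to produce a continuous $g^{>}:X\times Y\to Z^{\sim}$ with $g^{>}(x,y)=g^{<}(x)(y)$, and this is the single point at which an admissibility hypothesis is unavoidable. Under (i), the assumption that $(X,Y)$ is an exponential pair means the adjunction $M(X\times Y,Z^{\sim})\cong M(X,M(Y,Z^{\sim}))$ is a bijection, so $g^{<}$ corresponds to such a $g^{>}$ directly. Under (ii), where $X=B\times W$ with $p$ the projection, the pullback simplifies via the homeomorphism $X\sqcap Y\cong W\times Y$, which is assumed to be a $k$-space; this is exactly the situation in which Hu's admissible condition \cite[Ch.~V, Cor.~3.5]{Sze-Tsen Hu} applies to yield the desired continuous map on $X\sqcap Y$. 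In either case I would then restrict $g^{>}$ to the closed subspace $X\sqcap Y$: on the pullback $p(x)=q(y)$, so $g^{>}(x,y)=f^{<}(x)(y)\in Z$ and is never $\omega$. Since $Z\hookrightarrow Z^{\sim}$ is a subspace inclusion (every open set of $Z$ is the trace of an open set of $Z^{\sim}$), the corestriction $f^{>}:X\sqcap Y\to Z$ is continuous, with $f^{>}(x,y)=f^{<}(x)(y)$ as required.

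The main obstacle is precisely the production of the continuous $g^{>}$ from $g^{<}$ in part (b): the proper direction is formal, but the admissible direction fails for arbitrary spaces, which is why the two alternative hypotheses appear. I expect the bulk of the care to go into checking that (i) and (ii) each supply the exact instance of Hu's admissible condition needed for the \emph{enlarged} target $Z^{\sim}$ (not merely $Z$), and into verifying that restricting to the closed subspace $X\sqcap Y$ and corestricting along $Z\hookrightarrow Z^{\sim}$ are both legitimate.
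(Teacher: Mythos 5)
First, a point of orientation: the paper does not prove this statement at all. It is quoted verbatim from \cite[Th.~3.3]{P. Booth6} in the section on applications, purely so that it can be contrasted with the paper's own $k$-space version, Theorem \ref{Th:Fibred exp law k-spaces}. So the only in-paper argument available for comparison is the proof of that $k$-version, and your reconstruction does follow its architecture faithfully: closedness of $X\sqcap Y$ in $X\times Y$ from the Hausdorff hypothesis on $B$, extension to $g^{>}=(f^{>})^{\sim}:X\times Y\longrightarrow Z^{\sim}$, the ordinary proper condition of \cite{Sze-Tsen Hu} to obtain $g^{<}$, and the initial topology of Remark \ref{remark 4} (relative to $i$ and $q!Z$) to conclude that $f^{<}$ is continuous. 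Part (a) is correct as written, and your closing observations --- that $Z\hookrightarrow Z^{\sim}$ is a subspace inclusion and that $g^{>}$ never takes the value $\omega$ on the pullback, so the restriction--corestriction is harmless --- are exactly the checks that need to be made.

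The one genuine misstep is in case (b)(ii). You invoke the admissible condition of \cite[Ch.~V, Cor.~3.5]{Sze-Tsen Hu}, but that result (restated in Section 1 of this paper) requires $Y$ to be locally compact Hausdorff, or the spaces to satisfy the first axiom of countability; neither is hypothesized in (ii). The operative assumption there is that $Y\times W$ is a $k$-space, and what that buys is different: continuity of $g^{>}$ on $X\sqcap Y\cong W\times Y$ may be tested against incoming maps $h=\langle h_{1},h_{2}\rangle:C\longrightarrow W\times Y$ with $C$ compact Hausdorff, and for each such probe the composite $g^{>}\circ h$ factors as $e_{C}\circ\langle h_{2}^{\ast}\circ g^{<}\circ\phi,\,1_{C}\rangle$, where $\phi:C\longrightarrow X=B\times W$ is the map induced by $h$ and $e_{C}:M(C,Z^{\sim})\times C\longrightarrow Z^{\sim}$ is the evaluation, continuous because $C$ is compact Hausdorff \cite[Ch.~V, Lem.~3.9]{Sze-Tsen Hu}. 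This is precisely the device used in Proposition \ref{Pro 9} and Lemma \ref{Lem 1} of the paper, and with that substitution case (ii) closes; as you wrote it, the cited corollary simply does not apply to the hypotheses given.
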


We notice that the inconvenient assumptions built into the
admissible condition of Theorem 8 avoided in the corresponding
result here i.e. Theorem 6.

\subsection{Section Rule}
\label{sec:sect-rules}

\begin{corollary}\textbf{Section Rule.} \cite[Cor.3.4]{P. Booth6} Let $B$ be a Hausdorff space, and
$q:Y\longrightarrow{B}$ be a map.
\begin{itemize}
\item[(a)] If $l:Y\longrightarrow{Z}$ is a map, then the rule
$l^{\bullet}(b)=l|(Y|b):Y|b\longrightarrow{Z}$, where $b\in{B}$,
defines a section $l^{\bullet}$ to $q!Z$. Equivalently, we may
define $l^{\bullet}$ by $l^{\bullet}(b)(y)=l(y)$, where $q(y)=b$.
\item[(b)] If $Y$ is $k-space$ and $l^{\bullet}$ is a section to
$q!Z$, then the rule stated in (a) determines a map
$l:Y\longrightarrow{Z}$.
\end{itemize}
\end{corollary}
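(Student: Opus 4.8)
The plan is to deduce both parts from the ordinary (non-$k$) Fibred Exponential Law, Theorem 8, by the same device used for the based version in Corollary 5. Let $B\sqcap{Y}$ be the pullback of the identity $1_{B}:B\longrightarrow{B}$ along $q:Y\longrightarrow{B}$; its underlying set is $\{(b,y)\mid b=q(y)\}$ and the projection $\pi:B\sqcap{Y}\longrightarrow{Y}$, $\pi(b,y)=y$, is a homeomorphism, with continuous inverse $y\mapsto(q(y),y)$ (continuous because $q$ is). Throughout I take $X=B$ and $p=1_{B}$ in Theorem 8, so that its correspondence reads $f^{>}(b,y)=f^{<}(b)(y)$ whenever $b=q(y)$.

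For part (a), I first observe that $l^{\bullet}$ is a set-theoretic section: $l^{\bullet}(b)=l|(Y|b)$ lies in $M(Y|b,Z)\subset Y!Z$ and $(q!Z)(l^{\bullet}(b))=b$. For continuity I form the map $l\circ\pi:B\sqcap{Y}\longrightarrow{Z}$ and apply the Proper Condition, Theorem 8(a); this yields a continuous fibrewise map $(l\circ\pi)^{<}:B\longrightarrow{Y!Z}$ with $(q!Z)\circ(l\circ\pi)^{<}=1_{B}$. Evaluating gives $(l\circ\pi)^{<}(b)(y)=(l\circ\pi)(b,y)=l(y)$ for $b=q(y)$, so $(l\circ\pi)^{<}=l^{\bullet}$ and $l^{\bullet}$ is the asserted section. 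Since the Proper Condition requires only that $B$ be Hausdorff, part (a) needs no further hypotheses, consistent with the statement.

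For part (b) I run the correspondence in reverse, using the Admissible Condition, Theorem 8(b). Given a section $l^{\bullet}:B\longrightarrow{Y!Z}$ to $q!Z$, the point is to see that the admissible condition applies. The decisive observation is that taking $W$ to be a one-point space $\ast$ in condition (ii) identifies $B\times{W}$ with $B$ (its projection becoming $1_{B}$) and $Y\times{W}$ with $Y$; the requirement that $Y\times{W}$ be a $k$-space then reduces to the hypothesis that $Y$ is a $k$-space. Thus Theorem 8(b), condition (ii), applies and produces a map $(l^{\bullet})^{>}:B\sqcap{Y}\longrightarrow{Z}$ with $(l^{\bullet})^{>}(b,y)=l^{\bullet}(b)(y)$ for $b=q(y)$. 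Defining $l=(l^{\bullet})^{>}\circ\pi^{-1}:Y\longrightarrow{Z}$ yields $l(y)=(l^{\bullet})^{>}(q(y),y)=l^{\bullet}(q(y))(y)$, which is exactly the rule of part (a), and $l$ is continuous as a composite of continuous maps.

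The only real obstacle is this bookkeeping in part (b): Theorem 8 phrases its admissible condition for a base of product shape $B\times{W}$, so one must recognise that the relevant specialisation is $W=\ast$, at which point the $k$-space condition on $Y\times{W}\cong Y$ becomes precisely the stated hypothesis on $Y$. Once that identification is in place, both halves follow formally from Theorem 8 together with the homeomorphism $\pi$, and the two constructions are visibly mutually inverse, giving the claimed correspondence between maps $l:Y\longrightarrow{Z}$ and sections of $q!Z$.
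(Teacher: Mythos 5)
Your proof is correct and follows essentially the same route as the paper: the paper itself quotes this corollary from Booth without reproving it, but its own proof of the $k$-space analogue (Corollary 5) uses exactly your device --- the pullback of $1_{B}$ along $q$, the homeomorphism $\pi:B\sqcap{Y}\longrightarrow{Y}$, and the fibred exponential law, with part (b) obtained by running the correspondence backwards. Your specialisation $W=\ast$ in the admissible condition of Theorem 8(b)(ii), which turns the hypothesis ``$Y\times{W}$ is a $k$-space'' into ``$Y$ is a $k$-space,'' is the correct bookkeeping step.
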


The corresponding result in this work is Corollary 5.
\begin{theorem}\cite[Th. 8.1(b)]{P. Booth6}
There is a canonical bijection:
$$\theta:H^{m}(Y,Y|b;G)\longrightarrow{\Pi_{o}(Sec^{o}(p!K(G,m)))}$$
where the map $\theta$ is determined by the rule
$\theta[l]=[l^{\bullet}]$, where $l^{\bullet}(b)(y)=l(y)$ and
$q(y)=b$.
\end{theorem}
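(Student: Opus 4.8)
The plan is to realize $\theta$ as a composite of two known bijections, so that no new fibrewise analysis is required. The first input is the classical Eilenberg--MacLane representability of cohomology; the second is the Section Rule together with the preceding theorem, applied to the coefficient space $Z=K(G,m)$. Before composing, I would check that $K(G,m)$ fits the hypotheses of those results: it may be chosen to be a $CW$-complex, hence a $k$-space by Corollary 4, and the homotopy equivalence $K(G,m)\simeq\Omega K(G,m+1)$ endows it with a homotopy-associative, homotopy-commutative multiplication admitting a homotopy inverse and a homotopy identity $z_{o}=\ast$. Thus $(K(G,m),\ast)$ is an Abelian $H$-group in the sense used above, with $B$ based weak Hausdorff and $q:Y\to B$ the given map.

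I would then invoke representability in the relative, based form
$$H^{m}(Y,Y\vert b;G)\;\cong\;[\,Y,Y\vert b;K(G,m),\ast\,]^{o},$$
which sends a cohomology class to the homotopy class of a map of pairs $l:(Y,Y\vert b)\to(K(G,m),\ast)$ classifying it. Since $K(G,m)$ has $CW$ homotopy type and the pair $(Y,Y\vert b)$ lives among $k$-spaces, this bijection is available in the present category, and it carries the additive structure of cohomology to the pointwise addition of homotopy classes.

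Applying the preceding theorem with $Z=K(G,m)$ then gives that the Section Rule bijection of Corollary 5, part (c),
$$\lambda:[\,Y,Y\vert b;K(G,m),\ast\,]^{o}\;\longrightarrow\;\Pi_{o}(Sec^{o}(q!K(G,m)))$$
is in fact an isomorphism of Abelian groups, sending $[l]$ to $[l^{\bullet}]$ with $l^{\bullet}(b)(y)=l(y)$ whenever $q(y)=b$. Setting $\theta:=\lambda\circ(\text{representability})$ produces the asserted canonical bijection, and chasing a class $[l]$ through the two stages recovers precisely the stated rule $\theta[l]=[l^{\bullet}]$.

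The routine part is the bookkeeping of the two paragraphs above; the genuine obstacle is the representability step. One must confirm that the Eilenberg--MacLane isomorphism, usually phrased for $CW$-pairs in the category of all topological spaces, transports faithfully and naturally into the $k$-space, weak Hausdorff setting, so that the resulting $\theta$ is honestly canonical rather than merely a bijection of underlying sets. Once representability is secured there, the remainder is the formal composition of bijections already established in this paper.
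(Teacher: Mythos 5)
Your proposal matches the paper's treatment: the paper does not reprove Booth's Theorem 8.1(b) but obtains its $k$-version exactly as you do, by composing the identification $H^{m}(Y,Y|b;G)=[Y,Y|b_{o};K(G,m),e]$ coming from the $\Omega$-spectrum of Eilenberg--MacLane spaces with the Section Rule bijection $\lambda$ of Corollary 5(c), upgraded to a group isomorphism by Theorem 6. The one step you single out as the genuine obstacle --- transporting classical Eilenberg--MacLane representability into the $k$-space, weak Hausdorff setting --- is dissolved in the paper by simply \emph{defining} the relative cohomology groups via the $\Omega$-spectrum (following Maunder, Def.\ 8.4.6), so no separate representability argument is required there.
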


\subsection{$\Omega-spectrum$ of Eilenberg-MacLane spaces}
\label{sec:Spectrum}

If we follow the $\Omega-spectrum$ of Eilenberg-MacLane spaces to
cohomology, then the associated cohomology groups are defined by
$$H^{m}(Y,Y|b;G)=[Y, Y|b_{o};K(G,m),e],$$ ,for more details
concerning this spectra the reader can see \cite[Def. 8.4.6]{C.R.F. Maunder}.

Corollary 5 of this paper gives the k-version of both Corrolary
7 and Theorem 8, and  our Theorem 6 improves on Theorem 8.1(b) by
showing that the bijection of that result is actually an
isomorphism.

Hence we have established the $k-version$ foundation of the
application to the cohomology of fibrations that is discussed in
Ch. 8 of [B6].
\begin{theorem}\cite[Th. 4.1]{P. Booth6}. 
Let $B$, $Y$ and $Z$ be spaces, where $B$ is Hausdorff and $Y$ is
locally compact Hausdorff.  If $q:Y\longrightarrow{B}$ is a
Hurewicz fibration, then $q!Z$ is also a Hurewicz fibration.
\end{theorem}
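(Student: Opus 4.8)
The plan is to transcribe, into the classical (non-$k$) setting, the argument used above for the $k$-version proposition that $q!Z$ is a Hurewicz fibration, replacing its $k$-analogue of the exponential law by the classical Fibred Exponential Law (Theorem 8), and tracking carefully where the hypotheses on $B$ and $Y$ are consumed. Since $B$ is Hausdorff it is in particular $T_1$, so every fibre $Y|b=q^{-1}(b)$ is closed in $Y$; this is exactly what makes the partial-map description of $Y!Z$ in Remark \ref{remark 4} available and $q!Z$ defined in the first place.

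First I would verify the covering homotopy property of $q!Z$ directly. Let $A$ be a space, let $F:A\times I\longrightarrow B$ be a homotopy, write $F_0=F|_{A\times 0}$, and suppose $k^{<}:A\times 0\longrightarrow Y!Z$ is a map with $(q!Z)\circ k^{<}=F_0$; the task is to produce $K:A\times I\longrightarrow Y!Z$ lifting $F$ and extending $k^{<}$. Form the pullback spaces $(A\times I)\sqcap Y$ and $(A\times 0)\sqcap Y$ induced by $F$ and $F_0$, together with their projections $F^{\ast}q$ and $q^{\ast}F$. The Hurewicz-fibration hypothesis on $q$ enters here: the pullback $F^{\ast}q$ is again a Hurewicz fibration, and lifting the linear deformation $(a,t)\mapsto (a,(1-s)t)$ of $A\times I$ onto $A\times 0$ through $F^{\ast}q$ exhibits $(A\times 0)\sqcap Y$ as a retract of $(A\times I)\sqcap Y$, say by a retraction $R$. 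This is \cite[Lemma~4.2]{P. Booth6} read in the classical context.

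With $R$ in hand I would run the two exponential conversions exactly as in the earlier proposition. The admissible half of Theorem 8 turns $k^{<}$ into a map $k^{>}:(A\times 0)\sqcap Y\longrightarrow Z$ with $k^{>}(a,0,y)=k^{<}(a,0)(y)$; the proper half of Theorem 8 then turns the composite $k^{>}\circ R:(A\times I)\sqcap Y\longrightarrow Z$ into $K:A\times I\longrightarrow Y!Z$ with $K(a,t)(y)=(k^{>}\circ R)(a,t,y)$. Fibrewiseness $(q!Z)\circ K=F$ is immediate from the construction, and $K|_{A\times 0}=k^{<}$ follows because $R$ restricts to the identity on $(A\times 0)\sqcap Y$, so that $K(a,0)(y)=k^{>}(a,0,y)=k^{<}(a,0)(y)$.

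The step I expect to be the real obstacle is the admissible conversion of $k^{<}$ into $k^{>}$. Unlike the proper conversion, which requires no special hypothesis, the admissible direction of Theorem 8 holds only under an exponential-pair assumption or the product condition 8(b)(ii); it is precisely here that the assumption that $Y$ is locally compact Hausdorff is consumed, since it is that condition which makes $(A\times 0,Y)$ an exponential pair so that Theorem 8(b)(i) applies. This is the very assumption that the $k$-theoretic reformulation eliminates by using Theorem \ref{Th:Fibred exp law k-spaces} in place of Theorem 8, which is why the earlier proposition needs only $B$ weak Hausdorff, whereas the present classical statement must assume $Y$ locally compact Hausdorff.
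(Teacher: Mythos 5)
Your proof is correct and follows essentially the same route as the paper: the paper does not actually reprove this classical statement (it is quoted from Booth for comparison), but its own Proposition that $q!Z$ is a Hurewicz fibration in the $k$-setting is proved by exactly the scheme you describe --- pull back over $A\times I$ and $A\times 0$, retract via \cite[Lemma 4.2]{P. Booth6}, and convert with the two halves of the fibred exponential law --- and you have correctly located where each hypothesis is consumed, in particular that the admissible half of Theorem 8 is the step requiring $Y$ locally compact Hausdorff, which is precisely what the $k$-space version eliminates. The one imprecision is your aside on why the retraction exists: lifting the deformation of $A\times I$ onto $A\times 0$ through $F^{\ast}q$ only yields a map whose restriction to $(A\times 0)\sqcap Y$ is vertically homotopic to the identity rather than equal to it, so an honest retraction also uses that $A\times 0\hookrightarrow A\times I$ is a closed cofibration (a Str\o{}m-type argument), which is exactly what the cited Lemma 4.2 supplies.
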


The reader  can compare that result with the Proposition 12, and observe that Proposition 12 is free of the
inconvenient assumption that $Y$ is locally compact and Hausdorff.

We now consider the first group of applications given in \cite{P. Booth6},
i.e. Theorems 10, 11 and 12.
\begin{theorem}  \cite[Th. 5.1]{P. Booth6}.
Let $A$ be  a $k-space$ and $B$ be a Hausdorff space. If
$q:Y\longrightarrow{B}$ is an identification and
$f:A\longrightarrow{B}$ is a map, then
$f^{\ast}p:Y\sqcap{A}\longrightarrow{A}$ is an identification.
\end{theorem}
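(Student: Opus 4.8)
The plan is to verify the two defining properties of an identification for the projection $\pi := f^{\ast}q\colon Y\sqcap A\to A$ (the map written $f^{\ast}p$ in the statement) separately. Surjectivity is immediate: since $q$ is an identification it is onto, so given $a\in A$ there is $y\in Y$ with $q(y)=f(a)$, whence $(y,a)\in Y\sqcap A$ and $\pi(y,a)=a$; and $\pi$ is continuous, being a restriction of the projection $Y\times A\to A$. It therefore remains to show that $A$ carries the final topology with respect to $\pi$, i.e. that $V\subseteq A$ is open whenever $\pi^{-1}(V)$ is open in $Y\sqcap A$. This is exactly where I would bring in the hypothesis that $A$ is a $k$-space: by the definition of the $k$-topology, $V$ is open in $A$ as soon as $t^{-1}(V)$ is open in $C$ for every compact Hausdorff space $C$ and every map $t\colon C\to A$. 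So the whole problem reduces to testing openness against such test maps $t$.

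Fixing such a $t\colon C\to A$, I would form the pullback $P$ of $q\colon Y\to B$ along $f\circ t\colon C\to B$, that is $P=\{(y,c)\in Y\times C: q(y)=f(t(c))\}$, with projection $\pi_C\colon P\to C$. There is an evident continuous map $j\colon P\to Y\sqcap A$, $j(y,c)=(y,t(c))$, obtained by restricting and corestricting $1_Y\times t\colon Y\times C\to Y\times A$, and it satisfies $\pi\circ j = t\circ\pi_C$. Consequently $j^{-1}(\pi^{-1}(V))=\pi_C^{-1}(t^{-1}(V))$, and since $\pi^{-1}(V)$ is open and $j$ is continuous, $\pi_C^{-1}(t^{-1}(V))$ is open in $P$. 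Thus it suffices to prove that $\pi_C\colon P\to C$ is itself an identification: for then $t^{-1}(V)$ is open in $C$, and, letting $C$ and $t$ vary, the $k$-space property of $A$ forces $V$ to be open in $A$.

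It remains to establish that the pulled-back projection $\pi_C\colon P\to C$ is an identification; this is the crux of the argument, and it is precisely here that the two hypotheses ``$C$ compact (hence locally compact) Hausdorff'' and ``$B$ Hausdorff'' are consumed. Since $C$ is locally compact Hausdorff, Whitehead's classical theorem that the product of an identification with the identity on a locally compact Hausdorff space is again an identification gives that $q\times 1_C\colon Y\times C\to B\times C$ is an identification. The graph $\Gamma=\{(f(t(c)),c):c\in C\}$ is closed in $B\times C$ because $B$ is Hausdorff, one has $P=(q\times 1_C)^{-1}(\Gamma)$, and the homeomorphism $\Gamma\cong C$, $c\mapsto(f(t(c)),c)$, carries $\pi_C$ to the restriction $(q\times 1_C)|_P$. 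Finally, restricting an identification $g\colon X\to W$ to the preimage $g^{-1}(S)$ of a closed (indeed any open or closed) subspace $S\subseteq W$ again yields an identification $g^{-1}(S)\to S$, by the routine argument phrased with closed sets; applying this to the closed set $\Gamma$ shows $\pi_C$ is an identification, and the proof is complete. The main obstacle is organizational rather than deep: one must set up the pullback square over $C$ so that $P$ appears as the preimage of the \emph{closed} graph $\Gamma$, after which Whitehead's product theorem and the closed-restriction lemma finish the job, while the reduction to compact Hausdorff test maps is exactly what makes the $k$-space hypothesis on $A$ do its work.
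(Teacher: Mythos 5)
Your proof is correct, but there is nothing in the paper to compare it against line by line: this theorem is quoted verbatim from \cite{P. Booth6} (Th.~5.1) without proof, and the paper's only gesture toward an argument is the surrounding discussion and Remark 9, which indicate that the intended route is Booth's mapping-space machinery. That route runs roughly as follows: given a function $g\colon A\to Z$ with $g\circ f^{\ast}q$ continuous, the proper condition of the fibred exponential law yields a continuous fibrewise map $Y\to A!Z$ that factors as $g^{\bullet}\circ q$, where $g^{\bullet}$ is the section of $f!Z$ attached to $g$ by the Section Rule; since $q$ is an identification, $g^{\bullet}$ is continuous, and the admissible half of the Section Rule (this is where ``$A$ is a $k$-space'' enters) recovers continuity of $g$. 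Your argument is a genuinely different and more elementary decomposition: you use the $k$-space hypothesis instead to reduce openness in $A$ to openness along compact Hausdorff test maps $t\colon C\to A$, pull the whole situation back over such a $C$, and finish with Whitehead's theorem that $q\times 1_{C}$ is an identification for $C$ locally compact Hausdorff, together with the lemma that an identification restricts to an identification over the preimage of a closed subspace, the Hausdorffness of $B$ being consumed exactly where you say, to close the graph $\Gamma$. Every step checks out: surjectivity of $\pi$ and $\pi_{C}$, the identity $j^{-1}(\pi^{-1}(V))=\pi_{C}^{-1}(t^{-1}(V))$, closedness of $\Gamma$ as the preimage of $\triangle_{B}$, the identification $P=(q\times 1_{C})^{-1}(\Gamma)$, and the closed-restriction lemma (phrased with closed sets, using that $P$ is closed in $Y\times C$). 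What the mapping-space proof buys is uniformity --- the same exponential-law template also delivers the cofibration and sectioned-fibration applications, which is the point of the paper --- while your proof buys self-containment: it needs only classical point-set facts and makes completely transparent where each of the three hypotheses ($A$ a $k$-space, $B$ Hausdorff, $q$ an identification) is used.
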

\begin{theorem}  \cite[Th. 6.1]{P. Booth6}.
Let $q:Y\longrightarrow{B}$ be a Hurewicz fibration, where
$B$ is a Hausdorff space and $Y$ is locally compact Hausdorff. If
$A\longrightarrow{B}$ is a closed cofibration, then
$Y|A\longrightarrow{Y}$ is also a closed cofibration.
\end{theorem}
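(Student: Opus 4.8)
The plan is to verify the retraction form of the cofibration condition for the inclusion $Y\vert A\hookrightarrow Y$, transporting the analogous datum for $A\hookrightarrow B$ upward through the fibration $q$. First I would record that, since $A$ is closed in $B$ and $q$ is continuous, the fibre restriction $Y\vert A=q^{-1}(A)$ is closed in $Y$; hence it suffices to produce a retraction $\rho\colon Y\times I\longrightarrow(Y\times\{0\})\cup(Y\vert A\times I)$. Because $A\hookrightarrow B$ is a closed cofibration, the inclusion $(B\times\{0\})\cup(A\times I)\hookrightarrow B\times I$ admits a retraction $r\colon B\times I\longrightarrow(B\times\{0\})\cup(A\times I)$. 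Writing $r=(r_{1},r_{2})$ with $r_{1}\colon B\times I\longrightarrow B$ and $r_{2}\colon B\times I\longrightarrow I$, the retraction property encodes exactly $r(b,0)=(b,0)$, $r(a,s)=(a,s)$ for $a\in A$, together with the compatibility $r_{2}(b,s)>0\Rightarrow r_{1}(b,s)\in A$.

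Next I would lift the base homotopy $H\colon Y\times I\longrightarrow B$, $H(y,s)=r_{1}(q(y),s)$, which satisfies $H(y,0)=q(y)$. Using the covering homotopy property of the Hurewicz fibration $q$, choose a lift $\widetilde H\colon Y\times I\longrightarrow Y$ with $\widetilde H(y,0)=y$ and $q\circ\widetilde H=H$, and then set
$$\rho(y,s)=\bigl(\widetilde H(y,s),\,r_{2}(q(y),s)\bigr).$$
The compatibility condition on $r$ guarantees that whenever the second coordinate is positive the point $\widetilde H(y,s)$ lies over $A$, so $\rho$ does land in $(Y\times\{0\})\cup(Y\vert A\times I)$; and $r_{2}(q(y),0)=0$ gives $\rho(y,0)=(y,0)$, so $\rho$ restricts to the identity on $Y\times\{0\}$.

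The one remaining point, and the main obstacle, is to check that $\rho$ fixes $Y\vert A\times I$ pointwise, i.e.\ that $\widetilde H(y,s)=y$ whenever $y\in Y\vert A$. Over such a point $q(y)=a\in A$, and there $r_{1}(a,s)=a$ and $r_{2}(a,s)=s$, so the \emph{base} homotopy $s\mapsto H(y,s)$ is the constant path at $a$; what is needed is that its chosen lift be the constant path at $y$. A bare application of the covering homotopy property does not deliver this, and one cannot invoke the homotopy extension property relative to $Y\vert A\hookrightarrow Y$ without circularity. The way around it is to perform the lifting by a \emph{regular} lifting function $\lambda\colon\{(y,\gamma)\in Y\times B^{I}\mid q(y)=\gamma(0)\}\longrightarrow Y^{I}$ for $q$ (one sending constant paths to constant paths), which every Hurewicz fibration is known to admit, and to define $\widetilde H(y,s)=\lambda\bigl(y,\,r_{1}(q(y),-)\bigr)(s)$. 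Regularity then forces $\widetilde H(y,-)$ to be constant exactly over $A$, so $\widetilde H(y,s)=y$ there, completing the retraction. It is precisely in constructing $\lambda$ and the associated $\widetilde H$ through the exponential correspondences $M(Y,B^{I})\cong M(Y\times I,B)$ and $M(Y,Y^{I})\cong M(Y\times I,Y)$ that the classical hypotheses intervene: the local compactness of $Y$ and the Hausdorffness of $B$ are what keep these adjunctions and the resulting maps continuous in the ordinary category of spaces, exactly the assumptions that the $k$-space reformulation developed elsewhere in this paper renders superfluous.
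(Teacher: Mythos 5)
The paper itself gives no proof of this statement: it is quoted verbatim from Booth's Theorem 6.1 purely for comparison with the $k$-space analogue proved later (Theorem 16), so your argument has to be judged on its own terms. Your overall strategy --- transporting a retraction $r\colon B\times I\to(B\times\{0\})\cup(A\times I)$ upward through $q$ and checking the retraction identities on $Y\times\{0\}$ and on $Y\vert A\times I$ --- is the right skeleton, and you correctly isolate the crux: the lifted homotopy must be stationary over $Y\vert A$, where the base homotopy is constant.

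The gap is the sentence asserting that a regular lifting function is something ``every Hurewicz fibration is known to admit.'' That is not a theorem under the stated hypotheses. Hurewicz's regularity theorem produces a regular lifting function only when the base is metrizable (or satisfies a comparable condition on a neighbourhood of the constant paths in $B^{I}$); for a base that is merely Hausdorff the existence of a regular lifting function is not known, so your proof fails exactly at the point you yourself flagged as the main obstacle. The standard repair (Str\o m, \cite{A. Strom2}) avoids regularity altogether: choose a Str\o m structure $(\varphi,\psi)$ for the closed cofibration $A\hookrightarrow B$, i.e.\ $\varphi\colon B\to I$ with $\varphi^{-1}(0)=A$ and $\psi\colon B\times I\to B$ with $\psi(b,0)=b$, $\psi(a,t)=a$ for $a\in A$, and $\psi(b,t)\in A$ whenever $t>\varphi(b)$; lift $\psi\circ(q\times 1_{I})$ to any $\Psi\colon Y\times I\to Y$ with $\Psi(y,0)=y$ using only the bare covering homotopy property; then reparametrize by setting $\widetilde\varphi=\varphi\circ q$ and $\widetilde\psi(y,t)=\Psi\bigl(y,\min(t,\varphi(q(y)))\bigr)$. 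Since $\varphi$ vanishes on $A$, this lift is frozen at time $0$ over $Y\vert A$, which is precisely the stationarity you needed, and $(\widetilde\varphi,\widetilde\psi)$ is a Str\o m structure exhibiting $Y\vert A\hookrightarrow Y$ as a closed cofibration. Note finally that the repaired argument uses neither the local compactness of $Y$ nor any separation on $B$; those hypotheses belong to Booth's own proof via the free range mapping space $Y!Z$, not to the construction of the lifting function through exponential laws, where only the local compactness of $I$ is ever invoked.
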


\begin{remark}\label{remark 8} 
Let $q:Y\longrightarrow{B}$ be a map and $t:B\longrightarrow{Y}$
be a section to $q$.  If $f:A\longrightarrow{B}$ is a map, then
$$\sigma:A\longrightarrow{Y\sqcap{A}},$$ defined by
$\sigma{(a)}=(tf(a),a)$, for all $a\in{A}$, is a section to the
projection $f^{\ast}q:Y\sqcap{A}\longrightarrow{A}$.
\end{remark}
\begin{theorem}\cite[Th.7.1]{P. Booth6}.
Let $q:Y\longrightarrow{B}$ be a Hurewicz fibration, with closed
cofibration section $t$, $B$ be Hausdorff and $Y$ locally compact
Hausdorff.  If $f:A\longrightarrow{B}$ is a map, then
$f^{\ast}q:A\sqcap{A}\longrightarrow{A}$ is a Hurewicz fibration
with a closed cofibration section $\sigma$.
\end{theorem}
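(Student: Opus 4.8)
There are two things to establish: that the pullback projection $f^{\ast}q:Y\sqcap A\longrightarrow A$ is a Hurewicz fibration, and that the section $\sigma$ of Remark \ref{remark 8} is a closed cofibration. The first is formal and uses neither the section nor local compactness. Given a test space $W$, a map $g=\langle g_{Y},g_{A}\rangle:W\longrightarrow Y\sqcap A$ and a homotopy $G:W\times I\longrightarrow A$ with $(f^{\ast}q)\circ g=G_{0}$, one has $q\circ g_{Y}=f\circ g_{A}=f\circ G_{0}=(f\circ G)_{0}$. The covering homotopy property for $q$ lifts $f\circ G$ to $\widehat{G}:W\times I\longrightarrow Y$ with $\widehat{G}_{0}=g_{Y}$ and $q\circ\widehat{G}=f\circ G$; then $(w,s)\mapsto(\widehat{G}(w,s),G(w,s))$ lands in $Y\sqcap A$, starts at $g$, and covers $G$. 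So $f^{\ast}q$ is a Hurewicz fibration.

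For the section I would first observe that $\sigma(A)=(q^{\ast}f)^{-1}(t(B))$: a point $(y,a)\in Y\sqcap A$ has $y\in t(B)$ exactly when $y=t(q(y))=t(f(a))$, i.e.\ $(y,a)=\sigma(a)$. Since $t$ is a \emph{closed} cofibration, $t(B)$ is closed in $Y$, so $\sigma(A)$ is closed in $Y\sqcap A$. It then remains to produce a neighbourhood-deformation-retract (NDR) structure. The plan is to manufacture \emph{fibre-preserving} NDR data $(\phi,H)$ for the pair $(Y,t(B))$ --- meaning $\phi^{-1}(0)=t(B)$, $H_{0}=\id_{Y}$, $H_{s}|_{t(B)}=\id$, $q\circ H_{s}=q$ for all $s$, and $H_{1}(y)=tq(y)$ whenever $\phi(y)<1$ --- and then to pull it back along $q^{\ast}f$. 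Setting $\widetilde{H}((y,a),s)=(H(y,s),a)$ is well defined precisely because $q\circ H_{s}=q$ forces $q(H(y,s))=q(y)=f(a)$; together with $\widetilde{\phi}=\phi\circ q^{\ast}f$ this yields NDR data for $(Y\sqcap A,\sigma(A))$, so $\sigma$ is a closed cofibration.

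The crux, and the step I expect to be the main obstacle, is the construction of the fibre-preserving data on $Y$. Ordinary NDR data $(\phi,h)$ for the closed cofibration $t$ exist, but the given deformation $h$ need not respect $q$: for $\phi(y)<1$ the path $h(y,-)$ carries $y$ to $h_{1}(y)\in t(B)$ while its projection $\beta_{y}=q\circ h(y,-)$ runs from $q(y)$ to some other point of $B$. To repair this I would concatenate $h(y,-)$ with the path $t\circ\overline{\beta_{y}}$ inside $t(B)$, obtaining a deformation of $y$ to $tq(y)$ whose projection is $\beta_{y}\ast\overline{\beta_{y}}$; this loop contracts to the constant path at $q(y)$ rel endpoints, and lifting that contraction through the fibration $q$, with the two endpoints held fixed, replaces the deformation by a fibre-preserving one still ending at $tq(y)$. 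A standard cut-off by $\phi$ then extends the construction over all of $Y$ and keeps it stationary on $t(B)$.

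The delicate points are the continuity, in $y$, of both the path-contraction and the chosen lift; these are handled by a lifting function for $q$, and this is exactly where the Hurewicz structure is indispensable. The point-set hypotheses that $B$ is Hausdorff and $Y$ is locally compact Hausdorff are what Booth's mapping-space implementation of these deformations requires, and they are precisely the restrictions removed by the $k$-space machinery developed in Sections 1--2.
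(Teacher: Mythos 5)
First, a point of context: the paper contains no proof of this statement --- it is quoted verbatim from Booth (Theorem 7.1 of \cite{P. Booth6}) as one of the results to be generalized, so there is no in-paper argument to measure yours against. Booth's own proof goes through the retraction characterization of closed cofibrations and the free range mapping space exponential law, which is exactly where the hypothesis that $Y$ be locally compact Hausdorff enters; your direct point-set argument bypasses that machinery and, tellingly, never uses local compactness, which is consistent with the fact that the $k$-space analogue stated after Remark \ref{remark 9} drops it.

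On the mathematics: your first half (a pullback of a Hurewicz fibration is a Hurewicz fibration) is complete and correct, as are the identification of $\sigma(A)$ as the preimage of $t(B)$ under the projection to $Y$, its closedness, and the observation that \emph{vertical} NDR data for $(Y,t(B))$ pulls back to NDR data for $(Y\sqcap{A},\sigma(A))$. The genuine gap is the step you yourself flag as the crux: the existence of fibre-preserving NDR data $(\phi,H)$ with $q\circ{H_{s}}=q$, $H_{s}|_{t(B)}=\id$ and $H_{1}=t\circ{q}$ where $\phi<1$. Your concatenate-and-lift sketch is the right idea, but the two points you defer are not routine: (i) the relative lifts must be chosen continuously in $y$, which requires posing the whole family as a single lifting problem over $Y\times{I}$ rather than invoking a pointwise lifting function; and (ii) the resulting deformation must be \emph{stationary} on $t(B)$, which does not follow from a cut-off by $\phi$ --- the cut-off only controls the region where $\phi$ is close to $1$, while for $y\in{t(B)}$ the chosen lift of the constant contraction need not itself be constant. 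Both difficulties are precisely the content of Str{\o}m's theorem that a closed cofibration $A\subset{E}$, with $p:E\longrightarrow{B}$ a fibration and $p|A$ a fibration, admits a vertical Str{\o}m structure (see \cite{A. Strom2}, which is already in the bibliography). Citing that result would close the argument; as written, the hardest step is asserted rather than proved.
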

\begin{remark}\label{remark 9} 
If we modify those proofs by assuming that all spaces are
$k-spaces$ and $B$ is weak Hausdorff, and replacing the about
results by our Theorems 5, we obtain the following analogous
results.
\end{remark}
\begin{theorem}
Let $A$ be  a $k-space$ and $B$ be a weak Hausdorff space. If
$q:Y\longrightarrow{B}$ is an identification and
$f:A\longrightarrow{B}$ is a map, then
$f^{\ast}q:Y\sqcap{A}\longrightarrow{A}$ is an identification.
\end{theorem}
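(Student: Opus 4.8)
The plan is to exhibit $f^{\ast}q$ as the restriction of a product identification and to deduce the result from two ingredients: that multiplying an identification by the $k$-space $A$ again yields an identification, and that weak Hausdorffness of $B$ forces the graph of $f$ to be closed. Throughout I work in the $k$-category, as stipulated in Remark \ref{remark 9}, so all products are $k$-products; note that $B$ is then itself a $k$-space by Corollary \ref{Cor 2}, being the identification image of the $k$-space $Y$.

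First I would establish the lemma that $q\times 1_{A}\colon Y\times A\to B\times A$ is an identification; this is the heart of the matter and the step where the $k$-space exponential law takes the place of any local-compactness hypothesis. Let $Z$ be an arbitrary $k$-space and $g\colon B\times A\to Z$ a function with $g\circ(q\times 1_{A})$ continuous. By Theorem \ref{The 3}, $g$ is continuous iff its transpose $\tilde g\colon B\to M(A,Z)$ is continuous, and $g\circ(q\times 1_{A})$ is continuous iff its transpose $Y\to M(A,Z)$ is continuous; this latter transpose is exactly $\tilde g\circ q$, since $(g\circ(q\times 1_{A}))(y,a)=g(q(y),a)=\tilde g(q(y))(a)$. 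As $q$ is an identification, the universal property of the quotient topology gives that $\tilde g\circ q$ is continuous iff $\tilde g$ is. Concatenating these equivalences yields that $g$ is continuous, whence $q\times 1_{A}$ is an identification. I expect this lemma to be the principal obstacle, largely a matter of bookkeeping: one must keep all four spaces $Y$, $A$, $B$, $Z$ in the $k$-category so that Theorem \ref{The 3} applies on both sides, and track the transposition correctly.

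It then remains to package two formal observations. The graph $\Gamma=\{(b,a)\in B\times A\mid b=f(a)\}$ equals $(1_{B}\times f)^{-1}(\bigtriangleup_{B})$ for the map $1_{B}\times f\colon B\times A\to B\times B$; since $B$ is weak Hausdorff, $\bigtriangleup_{B}$ is closed in $B\times B$, so $\Gamma$ is closed in $B\times A$, and moreover $\Gamma$ is the pullback of $f$ along the identity $1_{B}$, so the projection $\Gamma\to A$ is a homeomorphism. A direct computation identifies $Y\sqcap A$ (in the sense of Remark \ref{remark 3}) with $(q\times 1_{A})^{-1}(\Gamma)$, and under $\Gamma\cong A$ the restricted projection $(q\times 1_{A})^{-1}(\Gamma)\to\Gamma$ becomes $f^{\ast}q$. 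Because $\Gamma$ is closed, the restriction of the identification $q\times 1_{A}$ to the preimage $(q\times 1_{A})^{-1}(\Gamma)$ is again an identification onto $\Gamma$ (using the closed-set form of the quotient universal property, together with the fact that this preimage is closed in $Y\times A$). Composing with the homeomorphism $\Gamma\cong A$ finally shows that $f^{\ast}q\colon Y\sqcap A\to A$ is an identification, as required.
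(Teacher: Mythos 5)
Your proof is correct, but it takes a genuinely different route from the one the paper intends. The paper supplies no self-contained argument here: Remark \ref{remark 9} asks the reader to rerun Booth's proof of \cite[Th.~5.1]{P. Booth6} with the $k$-space Fibred Exponential Law (Theorem \ref{Th:Fibred exp law k-spaces}) substituted for the classical one. That argument lives entirely inside the free range mapping space formalism: given $g\colon A\to Z$ with $g\circ f^{\ast}q$ continuous, one forms $G\colon B\to A!Z$, $G(b)=g|(A|b)$, observes that $G\circ q$ is the adjoint of $g\circ f^{\ast}q$ and hence continuous by the proper condition, descends along the identification $q$ to get $G$ continuous, and recovers $g$ via the admissible condition together with the homeomorphism $B\sqcap A\cong A$; weak Hausdorffness of $B$ enters there to make the fibres $A|b$ closed so that $A!Z$ and the fibred law apply. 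You dispense with $A!Z$ altogether: the absolute exponential law (Theorem \ref{The 3}) yields that $q\times 1_{A}$ is an identification, weak Hausdorffness is used instead to close the graph $\Gamma$ of $f$, and $f^{\ast}q$ is exhibited as the restriction of $q\times 1_{A}$ to the closed saturated set $(q\times 1_{A})^{-1}(\Gamma)$, which is again an identification onto $\Gamma\cong A$. Both uses of weak Hausdorffness are avatars of the same fact (closedness of $\bigtriangleup_{B}$), and both proofs bottom out in an exponential law plus the universal property of $q$; yours is the more elementary and self-contained, while the paper's displays the theorem as an application of the $Y!Z$ machinery it is promoting. Two points you should make explicit to be airtight: the transpose $\tilde g(b)=g(b,-)$ does land in $M(A,Z)$ because $q$ is surjective and $g\circ(q\times 1_{A})$ is continuous; and your closed subspace of the $k$-product carries the same topology as the paper's $Y\sqcap A$ of Remark \ref{remark 3} (the $k$-ification of the subspace of the ordinary product) precisely because a closed subspace of a $k$-space is a $k$-space, so no discrepancy between the two candidate topologies on the pullback can arise.
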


\begin{theorem}
Let $q:Y\longrightarrow{B}$ be a Hurewicz fibration, where $B$ is
a weak Hausdorff space and $Y$ is $k-space$. If
$A\longrightarrow{B}$ is a closed cofibration, then
$Y|A\longrightarrow{Y}$ is also a closed cofibration.
\end{theorem}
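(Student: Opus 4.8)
The plan is to exhibit the inclusion $Y|A \hookrightarrow Y$ as a closed cofibration by transporting a Str\o m structure from the base, following Booth's argument for the locally compact Hausdorff case but reinterpreted in the $k$-category exactly as prescribed in Remark~\ref{remark 9}. First I would dispose of the closedness: since $A \to B$ is a closed cofibration, $A$ is closed in $B$, and as $q$ is continuous between $k$-spaces, $Y|A = q^{-1}(A)$ is closed in $Y$. It then suffices to produce a map $v \colon Y \to I$ with $v^{-1}(0) = Y|A$ together with a homotopy $G \colon Y \times I \to Y$ that is the identity at $t=0$, is stationary on $Y|A$, and carries $y$ into $Y|A$ whenever $v(y) < t$; such a pair is precisely a Str\o m structure certifying a closed cofibration.

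Second, I would fix a Str\o m structure $(u, D)$ for the pair $(B, A)$: a map $u \colon B \to I$ with $u^{-1}(0) = A$ and a deformation $D \colon B \times I \to B$ with $D_0 = \mathrm{id}$, with $D$ stationary on $A$, and with $D_t(b) \in A$ for $t > u(b)$. Setting $v = u \circ q$ gives $v^{-1}(0) = Y|A$ at once. The deformation upstairs will be obtained by covering $D \circ (q \times 1_I) \colon Y \times I \to B$, a homotopy which at $t=0$ equals $q$. Here I would use that $Y \times I = Y \times_k I$ is a $k$-space (Lemma~\ref{Lem 1}, since $I$ is compact Hausdorff), so that the $k$-covering homotopy property of $q$ applies to the incoming $k$-space $Y$. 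Any lift $G$ with $G_0 = \mathrm{id}$ and $q \circ G = D \circ (q \times 1_I)$ immediately yields the boundary condition $G_0 = \mathrm{id}$ and the carrying condition: if $v(y) < t$ then $D_t(q(y)) \in A$, whence $q(G(y,t)) \in A$ and $G(y,t) \in Y|A$.

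The hard part is the stationarity of $G$ on $Y|A$, since an unadorned covering homotopy need not fix the points lying over $A$, even though $D$ itself is stationary there. To force this I would not lift $D \circ (q \times 1_I)$ blindly, but instead build a \emph{regular} lifting function for $q$ --- one sending constant paths to constant paths --- and set $G(y,t) = \Lambda\bigl(y, D(q(y),-)\bigr)(t)$, the lift of the path $s \mapsto D(q(y), s)$ that starts at $y$. Over $Y|A$ the path $s \mapsto D(q(y),s)$ is constant (because $D$ is stationary on $A$), so regularity of $\Lambda$ forces $G(y',t) = y'$ there, while the boundary and carrying conditions persist verbatim. The lifting function $\Lambda \colon Y \sqcap_k M(I,B) \to M(I,Y)$ is extracted from the $k$-covering homotopy property by applying the $k$-space exponential law (Theorem~\ref{The 3}) to the universal test diagram over the $k$-ified pullback; in Booth's original phrasing this step is routed through the free range mapping space and the Fibred Exponential Law (Theorem~\ref{Th:Fibred exp law k-spaces}), which is exactly the substitution demanded in Remark~\ref{remark 9} and which dispenses with the local compactness hypothesis of the classical result.

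The main obstacle, then, is the \emph{regularization} of $\Lambda$: Str\o m's reparametrization trick, which pauses the lift precisely while the base path is stationary, must be carried out entirely inside the $k$-category. I expect this to be the only delicate point, and the place where the standing hypotheses are consumed: the weak Hausdorffness of $B$ is what licenses Theorem~\ref{Th:Fibred exp law k-spaces} and keeps the relevant pullbacks and path objects well behaved, while the $k$-space exponential laws guarantee that each continuity verification in the reparametrization survives without any local compactness assumption on $Y$. Once $\Lambda$ is regular, the pair $(v,G)$ is the desired Str\o m structure, and $Y|A \hookrightarrow Y$ is a closed cofibration, as claimed.
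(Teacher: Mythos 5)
Your skeleton is the right one --- and, to be clear, it is necessarily more detailed than what the paper supplies, since the paper's entire ``proof'' of this statement is the instruction in Remark~\ref{remark 9} to rerun Booth's argument for \cite[Th. 6.1]{P. Booth6} with the $k$-space substitutions; there are no details there to compare against. Your reduction to producing a Str\o m structure $(v,G)$ on the pair $(Y,Y|A)$, the choice $v=u\circ q$, the lifting of $D\circ(q\times 1_{I})$ using the fact that $Y\times I$ is a $k$-space (Lemma~\ref{Lem 1}), and the verification of the carrying condition are all correct.

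The gap is exactly where you flag it, and it is not merely ``delicate'': it is the step that fails as proposed. A regular lifting function --- one sending constant paths to constant paths --- is not available for an arbitrary Hurewicz fibration: its existence is a theorem (Hurewicz) over metrizable bases, and nothing in the standing hypotheses ($B$ a weak Hausdorff $k$-space) nor anywhere in this paper supplies a substitute, so ``regularize $\Lambda$'' appeals to a tool you have not built and in general cannot build. The escape, and the reason Str\o m's own theorem needs no regularity, is to reparametrize the lifted deformation rather than the lifting function. Take any lift $G$ of $D\circ(q\times 1_{I})$ with $G_{0}=1_{Y}$ and set $G'(y,t)=G\bigl(y,\min(t,2v(y))\bigr)$. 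Then $G'_{0}=1_{Y}$; for $y\in Y|A$ one has $v(y)=0$, hence $G'(y,t)=G(y,0)=y$, so stationarity over $A$ holds with no condition whatever on $G$; and if $t>v(y)>0$ then $\min(t,2v(y))>v(y)=u(q(y))$, so $q(G'(y,t))=D(q(y),\min(t,2v(y)))\in A$ and the carrying condition survives, the case $v(y)=0$ being trivial since such $y$ already lies in $Y|A$. With this replacement your proof closes up; the free range mapping space and Theorem~\ref{Th:Fibred exp law k-spaces} are not needed at all, and the only $k$-theoretic inputs are Lemma~\ref{Lem 1} (making $Y\times I$ a legitimate test object for the $k$-sense covering homotopy property) and the $k$-space validity of the Str\o m characterization of closed cofibrations, which you should still record explicitly.
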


\begin{theorem}
Let $q:Y\longrightarrow{B}$ be a Hurewicz fibration, with closed
cofibration section $t$, $B$ be weak Hausdorff and $Y$ $k-space$.
If $f:A\longrightarrow{B}$ is a map, then
$f^{\ast}q:A\sqcap{A}\longrightarrow{A}$ is a Hurewicz fibration
with a closed cofibration section $\sigma$.
\end{theorem}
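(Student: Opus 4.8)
The plan is to prove the two conclusions separately: that $f^{\ast}q\colon Y\sqcap A\to A$ is a Hurewicz fibration, and that the section $\sigma$ of Remark~\ref{remark 8}, given by $\sigma(a)=(tf(a),a)$, is a closed cofibration. Throughout I adopt the $k$-space conventions of Remark~\ref{remark 9}, so that $Y\sqcap A$ carries the $k$-ified pullback topology and the universal property of the fibred product (Remark~\ref{remark 3}) is available: a function $\langle h,\kappa\rangle\colon W\to Y\sqcap A$ is continuous exactly when its coordinates $h\colon W\to Y$ and $\kappa\colon W\to A$ are continuous and satisfy $q\circ h=f\circ\kappa$. First I record that $\sigma$ is indeed a section, since $f^{\ast}q(\sigma(a))=a$, and that its image is
\[
\sigma(A)=\{(tf(a),a):a\in A\}=(q^{\ast}f)^{-1}(t(B)),
\]
where $q^{\ast}f\colon Y\sqcap A\to Y$ is the projection; as $t$ is a \emph{closed} cofibration, $t(B)$ is closed in $Y$, so $\sigma(A)$ is closed in $Y\sqcap A$.

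For the fibration statement I would show that pullbacks of Hurewicz fibrations remain Hurewicz fibrations in the $k$-context. Let $W$ be a $k$-space, let $\Phi\colon W\times I\to A$ be a homotopy, and let $\psi_{0}=\langle h_{0},\Phi_{0}\rangle\colon W\to Y\sqcap A$ lift $\Phi_{0}=\Phi(-,0)$, so $q\circ h_{0}=f\circ\Phi_{0}$. Then $f\circ\Phi\colon W\times I\to B$ is a homotopy with $(f\circ\Phi)(-,0)=q\circ h_{0}$, and since $W\times I$ is a $k$-space by Lemma~\ref{Lem 1}, the covering homotopy property of $q$ yields $H\colon W\times I\to Y$ with $H(-,0)=h_{0}$ and $q\circ H=f\circ\Phi$. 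The pair $\langle H,\Phi\rangle\colon W\times I\to Y\sqcap A$ then lands in the fibred product, lifts $\Phi$, and restricts to $\psi_{0}$ at time $0$; its continuity is precisely the coordinate criterion recorded above. Hence $f^{\ast}q$ is a Hurewicz fibration.

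The cofibration statement I would reduce to producing a \emph{vertical} deformation of $(Y,t(B))$. Starting from a closed NDR representation $(u,H)$ of the cofibration $t$ — a map $u\colon Y\to I$ with $u^{-1}(0)=t(B)$ and a homotopy $H\colon Y\times I\to Y$ with $H_{0}=\id_{Y}$, $H_{s}|_{t(B)}=\mathrm{incl}$, and $H_{1}(y)\in t(B)$ whenever $u(y)<1$ — the obstruction is that $H$ need not respect the fibres of $q$. If, however, I can manufacture a homotopy $\Gamma\colon Y\times I\to Y$ with the \emph{same} NDR properties and in addition $q\circ\Gamma(y,s)=q(y)$ for all $y,s$, then putting $v=u\circ q^{\ast}f$ (so $v^{-1}(0)=\sigma(A)$) and
\[
G\bigl((y,a),s\bigr)=\bigl(\Gamma(y,s),a\bigr)
\]
gives a well-defined map into $Y\sqcap A$ — verticality guarantees $q(\Gamma(y,s))=q(y)=f(a)$ — which fixes $\sigma(A)$, starts at the identity, and carries $Y\sqcap A$ into $\sigma(A)$ wherever $v<1$. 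Thus $(v,G)$ is a closed NDR representation of $\sigma$, and $\sigma$ is a closed cofibration. The verticality of $\Gamma$ is \emph{forced}: one cannot instead slide the $A$-coordinate along the base path $s\mapsto q(H(y,s))$, because $f$ is not assumed to be a fibration and such paths need not lift to $A$.

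The construction of $\Gamma$ is the main obstacle, and it is here that the fibration hypothesis on $q$ is spent. The plan is to straighten $H$ fibrewise by means of a Hurewicz connection $\lambda$ for $q$, chosen compatible with the section in the sense that $\lambda(t(b),\omega)=t\circ\omega$ for every path $\omega$ in $B$ issuing from $b$. Such a connection exists because a connection is a section of the canonical fibration $M(I,Y)\to Y\sqcap M(I,B)$ determined by $q$ (formed from $q$ and evaluation at $0$), the prescription $\omega\mapsto t\circ\omega$ already defines it over the part where the initial point lies on $t(B)$, and this partial section extends over all of $Y\sqcap M(I,B)$ precisely because $t$ is a cofibration; this is the step that couples the two hypotheses. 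With such a $\lambda$ I would form the two-parameter base homotopy $\Theta(y,s,r)=q\,H(y,s(1-r))$, lift it through $q$ from the seed $H$ at $r=0$ so that the lift stays constant on the slice $s=0$ and over $t(B)$, and set $\Gamma(y,s)$ equal to the value of this lift at $r=1$. Verticality is immediate from $\Theta(y,s,1)=q(y)$; the endpoint $\Gamma(y,1)$ lands in $t(B)$ because at $s=1$ the lift follows the reverse of the base path $s\mapsto q H(y,s)$ out of $H_{1}(y)=t(q H_{1}(y))$, which $t$-compatibility identifies with a path inside $t(B)$; and $\Gamma$ fixes $t(B)$ because $H$ already does. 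The delicate points I expect to absorb most of the work are the continuity bookkeeping (clean only because every space in sight is a $k$-space and $I$ is compact Hausdorff, via Lemma~\ref{Lem 1}), the choice of a regular $t$-compatible connection, and the verification that the partial condition $u<1$ survives the reverse lift. This mirrors, in the $k$-category and with $B$ merely weak Hausdorff, the corresponding argument of \cite{P. Booth6}.
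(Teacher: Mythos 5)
The paper never actually proves this theorem: Remark~\ref{remark 9} simply asserts that Booth's proof of \cite[Th.~7.1]{P. Booth6} goes through once the fibred exponential law and the locally compact Hausdorff hypotheses are replaced by their $k$-space counterparts. Your proposal is therefore a genuinely different, self-contained route, and most of it is sound: the coordinate criterion for maps into $Y\sqcap{A}$ does show that a pullback of a Hurewicz fibration is a Hurewicz fibration in the $k$-category; $\sigma(A)=(q^{\ast}f)^{-1}(t(B))$ is closed; and the reduction of the cofibration claim to a \emph{vertical} Str{\o}m structure $(u,\Gamma)$ on $(Y,t(B))$ is exactly right. That reduction target is precisely Str{\o}m's fibrewise cofibration theorem (see \cite{A. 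Strom2}, already in the bibliography), so an alternative to building $\Gamma$ by hand is to check that Str{\o}m's own proof survives the passage to $k$-spaces with $B$ weak Hausdorff.

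The genuine gap is in your construction of $\Gamma$. Setting $\Gamma(y,s)=\lambda\bigl(H(y,s),\,r\mapsto qH(y,s(1-r))\bigr)(1)$ forces you to evaluate the lift at $r=1$ for \emph{every} $s$, including $s=0$: verticality leaves no room to damp the reverse lift near $s=0$, since any reparametrization $r=\phi(s)$ with $\phi(s)<1$ for small $s>0$ gives $q\Gamma(y,s)=qH(y,s(1-\phi(s)))\neq q(y)$ in general. Hence $\Gamma(y,0)=\lambda(y,c_{q(y)})(1)$, and the requirement $\Gamma_{0}=\id_{Y}$ is exactly the statement that $\lambda$ is \emph{regular} on constant paths. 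Your $t$-compatibility yields this for free over $t(B)$ but nowhere else, and regularity of an arbitrary Hurewicz fibration is not automatic: Hurewicz's regularity theorem requires the diagonal of $B$ to be a zero set (e.g.\ $B$ metrizable), which a weak Hausdorff $k$-space need not satisfy. So you must either justify the existence of a regular $t$-compatible lifting function in this generality or replace the connection-based construction by Str{\o}m's argument, which avoids regularity. A second, smaller debt: the section-extension step producing the $t$-compatible $\lambda$ needs (i) that $t(B)\sqcap{M(I,B)}\hookrightarrow{Y\sqcap{M(I,B)}}$ is a cofibration --- a pullback of the cofibration $t(B)\hookrightarrow{Y}$ along a fibration, i.e.\ essentially the preceding theorem of the paper --- and (ii) that the partial section is vertically homotopic to the restriction of a global one, which here follows because $M(I,Y)\longrightarrow{Y\sqcap{M(I,B)}}$ is a fibration and a homotopy equivalence and so has contractible fibres; neither point is stated in your sketch.
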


The reader will notice that the locally compact Hausdorff
assumption of Theorem 12 and 13 have  now been eliminated.
\section{Moore-Postnikov System}

Let $G$ and $H$ be Abelian groups and $m$ and $n$ be integers with
$1<n<m$. Then $$q_{1}:PK(G,m+1)\longrightarrow{K(G,m+1)}$$ will
denote the path fibration over the Eilenberg-MacLane space
$K(G,m+1)$ and $K(H,n+1)$ (see \cite{E.H. Spanier}, Pgs. 75 and 99). Let
$(B,b_{o})$ be a space with a basepoint.  Then a \emph{3-stage
Postnikov tower} $\tau(k_{1},k_{2})=p_{1}\circ{p_{2}}$, over $B$
and with fibres $K(G,m)$ and $K(H,n)$, consists of principal
fibrations
$$p_{1}:E_{1}\longrightarrow{B}$$ and
$$p_{2}:E_{2}\longrightarrow{E_{1}}$$ with fibres  $K(G,m)$ and $K(H,n)$
respectively.  So $p_{1}$ is induced from $q_{1}$ by first
k-invariant  $$k_{1}:B\longrightarrow{K(G,m+1)},$$ i.e.
$p_{1}=k^{\ast}_{1}q_{1}$,  and $p_{2}$ is induced by $q_{2}$ by
the second k-invariant $$k_{2}:B\longrightarrow{K(H,n+1)},$$ i.e.
$p_{2}=k^{\ast}_{2}q_{2}$. The maps $k_{1}$  and $k_{2}$ are based
mappings, where the identities of $K(G,m+1)$ and  $K(H,n+1)$ are
their base points.

The fibre of $p_{1}$ over $b_{o}$ is
$$\{b_{o}\}\times{(PK(G,m)|k_{1}(b_{o}))}=\{b_{o}\}\times{\Omega(K(G,m+1))}=\{b_{o}\}\times{K(G,m)},$$
where $\Omega$ indicate the corresponding loop space.  The fibre
of
$$\tau(k_{1},k_{2})=p_{1}\circ{p_{2}}:E_{2}\longrightarrow{B}$$ is
the subspace of $E_{2}$ obtained by pulling back aver over
$k_{2}|\{b_{o}\}\times{(K(G,m)}$, i.e.
$\{b_{o}\}\times{(K(G,m)}\sqcap{(K(H,n+1)}.$ So the fibre of
$\tau(k_{1},k_{2})$ is $K(G,m)\times{K(H,n)}$ if and only if
$$k_{2}|\{b_{o}\}\times{K(G,m)}:\{b_{o}\}\times{K(G,m)}\longrightarrow{K(H,n+1)}$$
is the constant map
$$c_{e}:\{b_{o}\}\times{K(G,m)}\longrightarrow{K(H,n+1)}$$ with value
the identity $e$ of $K(H,n+1)$.

The problem considered in   \cite{P. Booth6}, and in \cite{P.Booth5},
is the classification of 3-stage Postnikov towers, with fibre
$K(G,m)\times{K(H,n)}$, up to fibrewise homotopy equivalence. The
map $$q:PK(G,m+1)\longrightarrow{K(G,m+1)}$$ and the space
$K(H,n+1)$ enable us to define a free range mapping space
$PK(G,m+1)!K(H,n+1)$.  The classifying space  $M_{\infty}$ used in
\cite{P.Booth5} and \cite{P. Booth6} is just a path component of that space, i.e. the
component that contains the constant map
$$c_{e}:K(G,m)\longrightarrow{K(H,n+1)}.$$

It is shown in \cite[Th.7.5]{P.Booth5}, that $[B,M_{\infty}]^{o}$ classifies
our 3-stage Postnikov tower up to a strong form of fibrewise
homotopy equivalence.

Let $\varepsilon{(K(G,m)\times{K(H,n)})}$ denote the group of
homotopy classes of self-homotopy equivalence of
$K(G,m)\times{K(H,n)}$.  It is shown in \cite{P.Booth7} that an orbit set of
$[B,M_{\infty}]^{o}$, under an action of
$\varepsilon{(K(G,m)\times{K(H,n)})}$, classifies the fibre
homotopy equivalence classes of 3-stage towers as discussed above.

Theorem (M6) of \cite{P.Booth5}. We consider 3-stage towers over $B$ with
fibre   $$K(G,m)\times {K(H,n)},$$  is proved on p.98 of \cite{P.Booth5} using
the general form of the fibrewise exponential law  for $k-spaces$
see  \cite[Th. 5.2]{P.Booth5}. However, it is more natural to prove a free
range exponential law for $k-spaces$ as in our Theorem 6. That
state and proof goes as follows:
\subsection{Classification Theorem}
\label{sec:classification-theorem}

\begin{theorem}
Let $\tau(k_{1},k_{2})=p_{1}\circ{p_{2}}$ be a
$(K(G,m)\times{K(H,n)}$-tower. If $k_{1}\in{M^{o}(B,K(G,m+1))}$
and $g\in{M(E_{1},\{b_{o}\}\times{K(G,m+1)};K(H,n+1),\{e\} )}$,
then there is a bijective correspondence between:
\begin{itemize}
\item [(a)] $K(G,m)\times{K(H,n)}$-towers $\tau(k_{1},k_{2})$, and
\item [(b)] maps $k\in{M^{o}(B,M_{\infty})}$ determined by the
rule $k(b)(l)=k_{2}(b,l)$, where $(b,l)\in
{B\sqcap{PK(G,m+1)}}=E_{1}$, i.e. $k(b)=q_{1}(l)$.
\end{itemize}
\end{theorem}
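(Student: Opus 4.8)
The plan is to recognize this statement as a direct application of the Fibred Exponential Law for $k$-spaces (Theorem~\ref{Th:Fibred exp law k-spaces}), with the common base taken to be $K(G,m+1)$ rather than the tower base $B$. Since $p_1=k_1^{\ast}q_1$, the total space $E_1$ is by construction the fibred product $E_1=B\sqcap PK(G,m+1)$ formed over $K(G,m+1)$ via $k_1:B\to K(G,m+1)$ and $q_1:PK(G,m+1)\to K(G,m+1)$. Because the first $k$-invariant $k_1$ is fixed, a $K(G,m)\times K(H,n)$-tower refining $p_1$ is determined precisely by its second $k$-invariant $k_2:E_1\to K(H,n+1)$; and, as recorded in the discussion preceding the statement, the fibre of $\tau(k_1,k_2)$ is the product $K(G,m)\times K(H,n)$ exactly when $k_2$ restricts to the constant map $c_e$ on the fibre $\{b_o\}\times K(G,m)$ of $p_1$ over $b_o$. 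This restriction condition is the basing hypothesis imposed on the data, and it is what makes $k_2$ a based map in the relevant relative sense.

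First I would set $X=B$, $Y=PK(G,m+1)$ and $Z=K(H,n+1)$ in Theorem~\ref{Th:Fibred exp law k-spaces}, with structure maps $k_1$ and $q_1$; the structure map $r:Z\to K(G,m+1)$ required by the statement of the law may be chosen arbitrarily (for instance a constant map), since it enters neither the construction of $Y!Z$ nor the proof of the law. As $K(G,m+1)$ is a $CW$-complex it is a $k$-space and weak Hausdorff, so the hypotheses are satisfied. The law then yields a bijection between maps $f^{>}=k_2:E_1=B\sqcap PK(G,m+1)\to K(H,n+1)$ and fibrewise maps $f^{<}=k:B\to PK(G,m+1)!K(H,n+1)$ under the rule $k_2(b,l)=k(b)(l)$ whenever $k_1(b)=q_1(l)$; here the fibrewise condition is precisely $(q_1!Z)\circ k=k_1$, which is what the parenthetical clause in statement~(b) is recording.

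Next I would translate the basing condition across this bijection. Since $k_1$ is based, $k_1(b_o)$ is the basepoint of $K(G,m+1)$, and the corresponding fibre $PK(G,m+1)\vert k_1(b_o)=\Omega K(G,m+1)=K(G,m)$; under the exponential correspondence $k(b_o)$ is exactly the partial map $l\mapsto k_2(b_o,l)$, i.e.\ the restriction of $k_2$ to this fibre. Hence $k_2$ sends $\{b_o\}\times K(G,m)$ to $e$ if and only if $k(b_o)=c_e$, the basepoint of $PK(G,m+1)!K(H,n+1)$ fixed in Remark~\ref{remark 7}. Thus valid second $k$-invariants correspond precisely to based maps $k$, and it remains to see that each such $k$ lands in $M_\infty$. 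By definition $M_\infty$ is the path component of $PK(G,m+1)!K(H,n+1)$ containing $c_e$, so a based continuous map $k$ out of the pointed, path-connected space $B$ has image in $M_\infty$, and conversely every $k\in M^{o}(B,M_\infty)$ arises this way. Assembling these identifications — and noting that the argument of the exponential law is reversible — gives the asserted bijection between (a) and (b).

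The step I expect to be the main obstacle is this last one: pinning down the path-component condition. One must argue carefully that a based $k$ cannot leave the component $M_\infty$, which relies on path-connectedness of $B$ (the standard hypothesis in this Postnikov setting), and conversely that every based map into $M_\infty$ returns, via the law, a $k_2$ whose tower genuinely realizes the fibre $K(G,m)\times K(H,n)$. Everything else is a bookkeeping translation through the Fibred Exponential Law for $k$-spaces, whose content supplies the continuity and the bijectivity that do the real work.
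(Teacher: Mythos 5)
Your proposal is correct and follows essentially the same route as the paper's own proof: apply the Fibred Exponential Law with base $K(G,m+1)$ to convert $k_{2}:E_{1}=B\sqcap PK(G,m+1)\to K(H,n+1)$ into a fibrewise map $k:B\to PK(G,m+1)!K(H,n+1)$, translate the fibre condition $k_{2}\vert\{b_{o}\}\times K(G,m)=c_{e}$ into $k(b_{o})=c_{e}$, and use path-connectedness of $B$ to conclude $k(B)\subset M_{\infty}$. Your version is in fact slightly more careful than the paper's (checking that $K(G,m+1)$ is weak Hausdorff as the base of the exponential law and noting that the auxiliary map $r$ is irrelevant), but the decomposition and the key lemma are the same.
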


\begin{proof}
let $\tau(k_{1},k_{2})=p_{1}\circ{p_{2}}$ be a 3-stage Postnikov
tower, over a path connected  and weak Hausdorff space $B$, as
described above. It follows by Theorem \ref{Th:Fibred exp law k-spaces} that
$$k_{2}:B\sqcap{PK(G,m+1)}\longrightarrow{K(H,n+1)}$$ determines
a map $$k:B\longrightarrow{PK(H,m+1)!K(H,n+1)},$$ where
$k(b)(l)=k_{2}(b,l)$ and  $k_{1}(b)=q_{1}(l)$.

Conversely if $k$ is given then $k_{2}$ is determined in this way.

Then $\tau(k_{1},k_{2})$ has fibre $K(G,m)\times{K(H,n)}$ if and
only if  $$k_{2}|\{b_{o}\}\times{K(G,m)}=c_{e},$$ which is true if
and only if  $k(b_{o})(l)=e$, i.e. if
$$k(b_{o})(l)=c_{e}:K(G,m)\longrightarrow{K(H,n+1)}.$$

Now $B$ is path connected, so $k(B)$ is in the path component 
of   $$PK(G,m+1)!K(H,n+1)$$ that contains $c_{e}$, i.e.
$k(B)\subset{M_{\infty}}$. Then the result follows.
\end{proof}

\section{Open Problems and Questions}

\textit{Aknowledgments.} I feel deep sense of gratitude toward to proffesor Peter Booth. 
His professional and personal assistance is invaluable to me.
In particular, he introduced me to this subject matter and helped me transforma the task of writing mathematics into an eye-opening
delight in homotopical invariants.


\begin{thebibliography}{10}
\bibitem{Baues}
J. Baues, \emph{Obstruction theory on homotopy classification of Maps},  Springer-Verlag,
Co., Berlin, 1977.
\bibitem{P. Booth1}
P. Booth, \emph{A Unified Treatment of some Basic Problems in
Homotopy Theory}, Bull. Amer. Math. Soc. (1973). 331-336.

\bibitem{P.Booth5}
P. Booth, \emph{Fibration  with Product of Eilenberg-MacLane Space
Fibres I}, Contemporary Mathematics, 274 (2001), 79-104.
\bibitem{P. Booth6}
P. Booth, \emph{Mapping Spaces and Homotopy Theory}, Questiones
Mathematicae 27 (2004), 415-430.
\bibitem{R. Brown2}
R. Brown, \emph{Functions Spaces and Product Topologies}, Quart.
J. Math. Oxford (2) 15 (1964) 238-250.
\bibitem{R. Brown3}
R. Brown, \emph{Ten topologies for $X\times{Y}$}, Quart. J. Math.
Oxford (2) 14 (1963), 303-319.
\bibitem {Ab-Brown} Abd-Allan and R. Brown., \textit{A Compact-open topology on partial maps
with open domain.} J. London Math. Soc.
\textbf{2} 21 (1980), 487--495.
\bibitem{Clifford H. Dowker}
Clifford H. Dowker, \emph{Topology of Metric complexes}, Am. J.
Math. 74 (1952), 555-577.
\bibitem{Sze-Tsen Hu}
Sze-Tsen Hu, \emph{Elements of General Topology}, Holden-Day, San
Francisco, 1964.
\bibitem{I.M. James1}
I. M. James, \emph{Fibrewise topology}, Cambridge University Press,
Cambridge, 1989.
\bibitem{I.M. James2}
I. M. James, \emph{General Topology and Homotopy Theory}, Springer
N.Y. 1984.
\bibitem{C.R.F. Maunder}
C.R.F. Maunder, \emph{Algebraic Topology}, Dover Publications,
Inc. Mineola, New York, 1996.
\bibitem{D. Pupe}
D. Puppe, \emph{Homotopiemengen und ihre induzierten Abbildungen},
I' Math. Z. 69 (1958), 299-344.
\bibitem{E.H. Spanier}
Edwin H. Spanier, \emph{Algebraic Topology}, McGraw-Hill, New
York, 1966.
\bibitem{A. Strom1}
Arne Stom, \emph{Note on Cofibrations}, Math. Scand. 19 (1966),
11-14.
\bibitem{A. Strom2}
Arne Stom, \emph{Note on Cofibrations II}, Math. Scand. 22 (1968),
130-142.
\bibitem{A. Stom3}
Arne Strom, \emph{The Homotopy Category is a Homotopy Category},
Arch. Math. XXIII (1972), 435-44.
\bibitem{N. Steenrod}
N. Steenrod, \emph{The Topology of Fiber Bundles},
Princeton University Press, New Jersey, 1951

\bibitem{N. Steenrod2}
N. Steenrod, \emph{A Conveniet category of topological spaces},
Mich. Math. J. 14 (1967), 133-152.

\bibitem{N. Steenrod3 obstruction}
N. Steenrod, \emph{Cohomology Operations, and Obstruction to Extending Continuos Functions},
Adv. Math.  8 (1972), 371-416.
\bibitem{N. Steenrod4}
N. Steenrod, \emph{Cohomology Invariant of Mapping},
Ann. Math. Vol. 50,  No. 4 (1949), 954-988.
\bibitem{R. Vogt}
R. Vogt, \emph{Convenient Categories of Topological Spaces for
Homotopy Theory}, Arch. Math. XXII (1971), 545-555.
\bibitem{S. Willard}
S. Willard, \emph{General Topology},  Addison-Wesley Publishing,
Co., Reading, 1970.
\end{thebibliography}
\end{document}